\theoremstyle{plain}
 \newtheorem{theorem}{Theorem}[section]
 \newtheorem{lemma}[theorem]{Lemma}
 \newtheorem{corollary}[theorem]{Corollary}
 \newtheorem{proposition}[theorem]{Proposition}
 \newtheorem{example}[theorem]{Example}
 \newtheorem{Definition}[theorem]{Definition}
 \newtheorem{condition}[theorem]{Condition}
\theoremstyle{remark}
\newtheorem{remark}[theorem]{Remark}
 \def\beqlb{\begin{eqnarray}}\def\eeqlb{\end{eqnarray}}
 \def\beqnn{\begin{eqnarray*}}\def\eeqnn{\end{eqnarray*}}
 \def\<{\langle}\def\>{\rangle}
 \def\eqref#1{{\rm(\ref{#1})}}
 \def\qed{\hfill$\Box$\medskip}
\def\<{\left<}\def\>{\right>}
\font\tenmsbm=msbm10\textfont
\font\sevenmsbm=msbm7
\def\<{\left<}\def\>{\right>}
\def\({\left(}\def\){\right)}
\title[Explosion in simple exchangeable fragmentation-coagulation processes]{ \bf  On the explosion of the number of fragments in the simple exchangeable fragmentation-coalescence processes 
%On  explosion(s) in the block-counting process of simple exchangeable fragmentation-coagulation processes
}
\newcommand{\ddr}{\mathrm{d}}
\keywords{{$\Lambda$-coalescent}, {fragmentation}, {branching process}, {explosion}, {coming down from infinity}, {entrance boundary}, {regular boundary}, {continuous-time Markov chains}.}
\subjclass[2010]{60J80, 60J70, 60J90, 92D25}
\begin{document}
\maketitle
\centerline{\large  Cl\'ement Foucart \footnote{Universit\'e  Sorbonne Paris Nord and Paris 8, Laboratoire Analyse, G\'eom\'etrie $\&$ Applications, UMR 7539. Institut Galil\'ee, 99 avenue J.B. Cl\'ement, 93430 Villetaneuse, France, foucart@math.univ-paris13.fr} and Xiaowen Zhou \footnote{Department of Mathematics and Statistics, Concordia University, 1455 De Maisonneuve Blvd. W., Montreal, Canada, xiaowen.zhou@concordia.ca}
}
\begin{center}
\today

\end{center}
\begin{abstract}
%We consider the so-called simple exchangeable fragmentation-coagulation (EFC) processes, where the coagulations are multiple and not simultaneous, as in a $\Lambda$-coalescent, and the fragmentations dislocate at finite rate an individual block into sub-blocks of infinite size.   Sufficient conditions are found for the block-counting process to explode (i.e. to reach $\infty$)  or not and for infinity to be an exit or an entrance boundary. In {\red{a}} case of regularly varying fragmentation and coagulation mechanisms, we find regimes where the boundary $\infty$ is an exit, an entrance or a regular boundary. 
%%In the regular boundary setting, the partition-valued process, when started from a partition with an infinite number of blocks of infinite size, comes down from infinity and returns to partitions with infinitely many blocks of infinite size instantaneously almost-surely. 
%In the regular boundary setting, the EFC process leaves instantaneously the set of partitions with an infinite number of blocks of infinite size and returns to it immediately. Proofs are based on a new sufficient condition for explosion of positive continuous-time Markov chains, which is of independent interest.
We consider the exchangeable fragmentation-coagulation (EFC) processes, where the coagulations are multiple and not simultaneous, as in a $\Lambda$-coalescent, and the fragmentations dislocate at finite rate an individual block into sub-blocks of infinite size. Sufficient conditions are found for the block-counting process to explode (i.e. to reach $\infty$)  or not and for infinity to be an exit boundary or an entrance boundary. In a case of regularly varying fragmentation and coagulation mechanisms, we find regimes where the boundary $\infty$ can be either an exit, an entrance or a regular boundary.  In the latter regular case, the EFC process leaves instantaneously the set of partitions with an infinite number of blocks and returns to it immediately. Proofs are based on a new sufficient condition of explosion for positive continuous-time Markov chains, which is of independent interest.
\end{abstract}

\section{Introduction}

Stochastic processes describing both coalescence and fragmentation are ubiquitous in scientific disciplines such as astrophysics, chemistry, genetics, or population dynamics. We refer for instance to Aldous \cite{Aldous} for a review of applications. Berestycki in \cite{Berestycki04} has characterized and studied the class of exchangeable fragmentation-coalescence processes, called EFC processes for short.
%Those processes, called EFC processes for short, describe the evolution of disjoints subsets recovering the integers $\mathbb{N}:=\{1,2,\cdots\}$. These subsets, called later blocks, can merge, as in an (exchangeable) coalescent, and split, as in an (exchangeable) fragmentation, as time runs.
An EFC process is a process $(\Pi(t),t\geq 0)$,  for which at any time $t\geq 0$, $\Pi(t)$ stands for a collection, possibly finite, of disjoint subsets, called ``fragments" or ``blocks", $(\Pi_1(t),\Pi_2(t),\cdots)$,  covering the set of positive integers $\mathbb{N}:=\{1,2,\cdots\}$ i.e. $\cup_{i\geq 1}\Pi_i(t)=\mathbb{N}$. The process is exchangeable in the sense that for any time $t\geq 0$, the random partition $\Pi(t)$ of $\mathbb{N}$ has a law invariant under the action of permutations that only change finitely many integers. Last but not least, the evolution of the process is two-fold. Blocks can merge, as in an exchangeable coalescent, and can fragmentate as in an homogeneous exchangeable fragmentation.

This article considers the problem of classifying the nature of the boundary $\infty$ of the continuous-time Markov chains arising as the functional of the number of blocks in EFC processes. Our main goal is to study the phenomenon of explosion in the number of blocks. 

When fragmentation occurs at an infinite rate, the number of blocks is infinite at almost all times, see \cite[Theorem 12]{Berestycki04}. We shall not consider this case and will focus here on the class of EFC processes with fragmentations occurring at a finite rate, and in which neither simultaneous fragmentations nor simultaneous  multiple coagulations can occur. We shall assume moreover that the fragmentations cannot dislocate blocks into singletons. These processes, called \textit{simple} EFC processes, can therefore be seen as a generalisation of $\Lambda$-coalescents, as defined by Pitman \cite{Pitman99} and Sagitov \cite{MR1742154}, in which ``simple" fragmentations are incorporated. More precisely, each infinite block is splitted into $k+1$ sub-blocks of infinite size, (thus, creating $k$ new blocks), at rate say $\mu(k)$, independently of each other, where $\mu$ is a finite positive measure on $\mathbb{N}\cup\{\infty\}$. At the level of the number of blocks, the fragmentation is therefore nothing but a discrete branching process with no death, whose offspring measure is $\mu$. We shall call $\mu$ the \textit{splitting} measure. %Simple EFC processes arise in the background of many genetics and population models. They occur for instance when studying the frequency of a disadvantaged allele in certain Wright-Fisher models with selection, see Gonzales-Casanova and Spano \cite{GonzalesSpano} and the references therein. In terms of population models, fragmentations can be seen as reproduction events and coalescences as competition ones. This link is mentioned in  Lambert \cite{Lambert:2005bq}, where it is shown that if there are only binary coagulation events, the block-counting process of the associated simple EFC process has the same law as a discrete logistic branching process. More generally, $\Lambda$-coalescences can be interpreted as competition term between multiple individuals. This point of view was chosen for instance in Gonzales-Casanova et al. \cite{Gonzalesetal}. Some continuous-state space models are also closely related to EFC processes. We refer the reader for instance to Bansaye et al. \cite{Bansayeetal} and Foucart \cite{FoucartEJP}. We also wish to mention the work of Bertoin and Kortchemski \cite{BertoinKortchemski} where scaling limits of some EFC processes are studied.

EFC processes arise for instance when studying the frequency of a disadvantaged allele in certain Wright-Fisher models with selection, see Gonz\'alez-Casanova and Span\`o \cite{GonzalesSpano} and the references therein. In terms of population models, fragmentations can be seen as reproduction events and coalescences as negative interactions between individuals in the population. This link is mentioned in  Lambert \cite[Section 2.3]{Lambert:2005bq}, where it is shown that if there are only binary coagulation events, the block-counting process of the associated simple EFC process has the same law as a discrete logistic branching process. More generally, $\Lambda$-coalescences can be interpreted as a competition term between multiple individuals. This point of view was chosen for instance in Gonz\'alez-Casanova et al. \cite{Gonzalesetal}. Some continuous-state space models are also closely related to EFC processes. We refer the reader for instance to Bansaye et al. \cite{Bansayeetal} and Foucart \cite{FoucartEJP}. We wish to mention the work of Wagner \cite{Wagner} where the phenomenon of explosion is studied for a different family of coagulation-fragmentation particle systems. See also Bertoin and Kortchemski \cite[Section 5.4]{BertoinKortchemski} where scaling limits of some EFC processes are studied.

A remarkable feature of $\Lambda$-coalescent processes lies in the fact that under certain conditions on the coalescence, the process, started from a partition with infinitely many blocks, can instantaneously enter the set of partitions with a finite  number of blocks. This phenomenon, called \textit{coming down from infinity}, has been deeply studied in the 2000s, see \cite{Schweinsberg00}, \cite{Beres10}  and \cite{limic2015}. In particular, Schweinsberg \cite{Schweinsberg00} has found a necessary and sufficient condition on the measure $\Lambda$ for the coming down from infinity. In the pure coalescent framework, the block-counting process has decreasing sample paths and when it starts from infinity and leaves it, it stays finite at any further time. In Feller's terminology, see \cite{Feller} and e.g. Anderson \cite[Chapter 8, page 262]{Anderson}, $\infty$ is said to be an \textit{entrance} boundary. 

In a symmetric way, without coalescences, when the process starts from a partition with finitely many blocks, fragmentations into finitely many sub-blocks may accumulate and push the number of blocks to $\infty$ in finite time, which is referred to as the phenomenon of \textit{explosion}. It is also well-known that $\infty$ is an absorbing state for branching processes, so that if the process of pure fragmentation explodes then it stays infinite at any further time. In Feller's terminology, $\infty$ is said to be an \textit{exit} boundary.

When both fragmentations and coalescences are taken into account, sample paths of the block-counting process are not monotone anymore, and some new phenomena may arise. For instance,  when the pure coalescent part does come down from infinity, fragmentations may or may not prevent the coming down from infinity of the EFC process.  When the pure fragmentation explodes, it is also natural to ask whether or not the coalescent part will prevent explosion. To the best of our knowledge, only few results in this direction are known for the moment.

An important step in the understanding of the possible behaviors at $\infty$ of EFC processes, has been recently made by Kyprianou et al. in \cite{kyprianou2017}. They study there the ``fast" fragmentation-coalescence process, in which coagulations are binary, as in a Kingman coalescent, and fragmentation dislocates at a constant rate, any individual block into its constituent elements (which creates infinitely many singleton blocks, and causes an infinite jump of the number of blocks). In \cite{kyprianou2017}, a phase transition is found between a regime for which the boundary is an \textit{exit} and a regime where the boundary $\infty$ is \textit{regular}, namely the block-counting process leaves and returns to $\infty$ almost-surely. In this latter regime, it is also shown in \cite{kyprianou2017} that the boundary $\infty$ is regular \textit{for itself}. That is to say, when started from a partition with infinitely many blocks, the partition-valued process leaves the set of partitions with infinitely many blocks and returns to it instantaneously. This leads to many open questions for less extreme  mechanisms of fragmentation and coalescences. It is natural for instance to wonder if the boundary $\infty$ can be regular for other EFC processes than  the  ``fast" EFC process.
%An important step in the understanding of the possible behavior at $\infty$ has been recently made  by Kyprianou et al in \cite{kyprianou2017}. They study there the so-called ``fast" fragmentation-coalescence process, in which coagulations are binary, as in a Kingman coalescent, and fragmentation dislocates at a constant rate, any individual block into its constituent elements (which causes an infinite jump of the number of blocks).  In this article, a phase transition is found between a regime for which the boundary is an \textit{exit}, namely the boundary $\infty$ is reached and the block-counting process cannot leave it, and a regime where the boundary $\infty$ is \textit{regular}, namely the partition-valued process can leave and return to the set of partitions with an infinite number of blocks. In this latter regime, it is also shown in \cite{kyprianou2017} that the boundary $\infty$ is regular \textit{for itself}. Namely, started from infinity, the process leaves infinity and returns to it instantaneously almost-surely. Kyprianou's et al result \cite[Theorem 1]{kyprianou2017} leads to many open questions for more general, and less extreme, fragmentation and coalescences mechanisms.

The class of simple EFC processes with general $\Lambda$-coalescences has been recently studied in \cite{cdiEFC}. Let $(\Pi(t),t\geq 0)$ be a simple EFC process. The block-counting process, denoted by $(\#\Pi(t),t\geq 0)$, has the following infinitesimal dynamics when it evolves in $\mathbb{N}$. Let $n\in \mathbb{N}$.
\begin{itemize}
\item \textit{Coalescences}: for any $2\leq k\leq n$, it jumps  from $n$ to $n-k+1$ at rate $\binom{n}{k}\lambda_{n,k}$, with \[\lambda_{n,k}:=\int_{[0,1]}x^{k}(1-x)^{n-k}x^{-2}\Lambda(\ddr x).\]
\item \textit{Fragmentations}: for any $k\in \mathbb{N}\cup \{\infty\}$,
it jumps from $n$ to $n+k$, at rate $n\mu(k)$.
\end{itemize}
%Some conditions entailing that a simple EFC process comes down from infinity or ``stays infinite" have been  designed in \cite[Theorem 1.1]{cdiEFC}.
Unlike the fast EFC process, when the fragmentations cannot dislocate a block into infinitely many sub-blocks, i.e. $\mu(\infty)=0$, we cannot immediately deduce from the dynamics above whether the boundary $\infty$ can be reached or not. The question of accessibility of $\infty$ (namely explosion) in simple EFC processes was left unaddressed in \cite{cdiEFC}.
The first purpose of this article is to shed some light on the cases where fragmentations and $\Lambda$-coalescences together can allow the process to explode or not.

The coming down from infinity of simple EFC processes has been studied in \cite{cdiEFC}. A phase transition between a regime in which a simple EFC process, started from an exchangeable random partition with infinitely many blocks, comes down from infinity and one in which it stays infinite, is established in \cite[Theorem 1.1]{cdiEFC}. Combining this result, recalled in Section \ref{CDIEFC}, and our conditions for explosion/non-explosion,  we will find sufficient conditions on $\Lambda$ and $\mu$ for the boundary  $\infty$ to be either an exit or an entrance, see Theorem \ref{suffcondpropexit} and Theorem \ref{suffcondpropentrance} respectively. We study in details the nature of the boundary $\infty$ in two cases of regularly-varying coalescence and fragmentation mechanisms, see Theorem \ref{stablefragtheorem} and Theorem \ref{logcoal}. 

In particular, we shall see in Theorem \ref{stablefragtheorem}, that when the coalescence and splitting measures satisfy 
%for some $\beta\in (0,1)$ and $\alpha\in (0,1)$,
%\begin{equation}\label{regularlambda}
%\Lambda(\ddr x)=f(x)\ddr x \, \text{for } f(x)x^{\beta}\underset{x\rightarrow 0+}{\longrightarrow}c \text{ and } \mu(n)n^{-(1+\alpha)}\underset{n\rightarrow \infty}{\longrightarrow} b \text{ for some }  b,c>0
%\end{equation}
\begin{equation}\label{regularlambda}
\Lambda(\ddr x)=f(x)\ddr x,\text{ for } x\in [0,x_0] \text{ with } f(x)x^{\beta}\underset{x\rightarrow 0+}{\longrightarrow}c 
%\text{ and } \mu(n)n^{-(1+\alpha)}\underset{n\rightarrow \infty}{\longrightarrow} b
\end{equation}
\text{ and } 
\begin{equation}\label{regularmu}
\mu(n)n^{-(1+\alpha)}\underset{n\rightarrow \infty}{\longrightarrow} b
\end{equation}
for some $x_0\in (0,1]$, $\alpha,\beta\in (0,1)$ and $b,c>0$, then indeed the boundary $\infty$ can be regular.
%%Note that if $\Lambda$ and $\mu$ satisfy \eqref{regularlambda} then
%\begin{center}
%$\Phi(n)\underset{n\rightarrow \infty}{\sim} dn^{1+\beta}$ for some constant $d>0$ and $\ell(n)\underset{n\rightarrow \infty}{\sim} \frac{b}{\alpha(1-\alpha)}n^{1-\alpha}$.
%\end{center}
When $\alpha+\beta =1$, new phase transitions are found between regimes where $\infty$ is regular, an exit or an entrance. See the forthcoming Figure \ref{graph} for a summary of the possible behaviors. We will also show that when the boundary is regular, it is regular for itself. %This ensures the existence of a local time with infinite activity at $\infty$ and open many questions on the structure of the excursions from $\infty$.
%Fundamental examples of EFC processes satisfying the condition \eqref{regularlambda} are those with  Beta-coalescence and stable splitting measures, for which
%\begin{center} $\Lambda(\ddr x)=cx^{\alpha-1}(1-x)^{\beta}\ddr x$, $\beta\in \mathbb{R}$, and $\mu=\sum_{n\geq 1}\frac{b}{n^{1+\alpha}}\delta_n$
%\end{center}
%where $\ddr x$ and $\delta_n$ stand for the Lebesgue measure and Dirac measure respectively.

%\[\Phi:n\mapsto \sum_{k=2}^{n}(k-1)\binom{n}{k}\lambda_{n,k}\]  defined for any $n\geq 2$ and $\ell:n \mapsto\sum_{k=1}^{n}\bar{\mu}(k) \text{ for any } n\geq 1$.
%:\\

The study of the explosion is based on a new sufficient condition for explosion of general continuous-time Markov chains, see Theorem \ref{CTMCtheorem} and Corollary \ref{genexplosionmaincor}. Theorem \ref{CTMCtheorem}  is based on estimates for the first passage times above large levels. The main difficulties  that arise when $\Lambda$-coalescences are allowed come from the fact that downwards jumps can have a size of the same order  as the level of the process prior the jump. We shall see how to deal with those large jumps, and how to measure both coagulation and fragmentation strengths in order to apply our new condition of explosion.

The article is organized as follows. Our main results are stated in Section \ref{results}. In Section \ref{preliminaries}, we provide more background on EFC processes. We briefly recall their Poisson construction, as well as some important properties of the functional of the number of blocks. We then recall some results about the coming down from infinity as well as results on explosion of pure branching processes. Our new condition for explosion is stated and established in Section \ref{suffcondexplctmc}. Proofs of the main results are given in Section \ref{proofs}.

%We shall denote by $\mathcal{P}_\infty$ the space of partitions.
% and to dislocate blocks into non-singleton sub-blocks.
%EFC process with a finite fragmentation measure $\mu_{\mathrm{Frag}}$, supported by partitions with blocks of infinite size, and a measure of coagulation $\mu_{\mathrm{Coag}}$   whose support is the partitions with only one non-singleton block.
%Explosion has been studied by Kersting and Klebaner \cite{MR1330770}, \cite{MR1466712} and Wagner \cite{MR2152254}.

%\section{Background on branching processes and  coalescent processes}
\textbf{Notation:} For any integers $n\leq m$, we denote by $[|n,m|]$ the interval of integers $\{n,\cdots, m\}$.  We shall say that a property $P(n)$ on the integer $n$ is true for large enough $n$, when there exists $n_0\in \mathbb{N}$ such that $P(n)$ holds for all $n\geq n_0$. For any $a\in \mathbb{R}_+\cup \{\infty\}$ and any $f$ a positive Borel function defined on $(a-\epsilon,a)$ for some $\epsilon>0$, we denote the integrability of $f$ at $a$ by $\int^{a}f(x)\ddr x<\infty$. The left and right limit at $a\in \mathbb{R}$ of $f$ are denoted by $\underset{x\rightarrow a^{-}}\lim f(x)$ and $\underset{x\rightarrow a^{+}}\lim f(x)$, respectively. Lastly, for any positive functions $f$ and $g$ well-defined in a neighbourhood of $a$, we use Landau's notation for asymptotic equivalence, namely we set $f(x)\underset{x\rightarrow a}{\sim} g(x)$ when $\frac{f(x)}{g(x)}\underset{x\rightarrow a}{\longrightarrow} 1$.

\section{Preliminaries}\label{preliminaries}
\subsection{Exchangeable fragmentation-coalescence processes}\label{EFCbasics}\

We denote by $\mathcal{P}_\infty$ the space of partitions of $\mathbb{N}$. By convention, 
any partition $\pi$ of $\mathbb{N}$ is represented by the sequence (possibly finite) of its non-empty blocks $(\pi_i,i\geq 1)$ ordered by their least element. For any $n\in \mathbb{N}$, we denote by $\pi_{|[n]}$ the partition restricted to $[n]:=\{1,\cdots,n\}$: namely $\pi_{|[n]}=(\pi_1\cap [n],\pi_{2}\cap[n],\cdots)$. The space $\mathcal{P}_\infty$ is endowed with the metric $d$, defined by $d(\pi,\pi'):=\max\{n\geq 1; \pi_{|[n]}=\pi'_{|[n]}\}^{-1}$. For any partition $\pi\in \mathcal{P}_\infty$, we denote by $\#\pi$, its number of non-empty blocks. By convention, if $\#\pi<\infty$, then we set $\pi_{j}=\emptyset$ for any $j\geq \#\pi+1$. 

Exchangeable fragmentation-coalescence processes are Feller processes with state-space $\mathcal{P}_\infty$. It is established in \cite{Berestycki04} that they are characterized in law by two $\sigma$-finite exchangeable measures on $\mathcal{P}_\infty$, $\mu_{\mathrm{Coag}}$ the measure of coagulation and $\mu_{\mathrm{Frag}}$, that of fragmentation.  We refer the reader to Berestycki's article for the general form that can take those measures, as well as the integrability conditions they must satisfy.

We briefly recall now the Poisson construction of EFC processes with given coagulation and fragmentation measures. See \cite{Berestycki04} for details on this construction.

Consider two independent Poisson Point Processes $\mathrm{PPP}_C$ and $\mathrm{PPP}_F$ respectively on $\mathbb{R}_+\times \mathcal{P} _\infty$ and $\mathbb{R}_+\times \mathcal{P} _\infty\times \mathbb{N}$ with intensity $\ddr t\otimes \mu_{\mathrm{Coag}}(\ddr \pi)$ and  $\ddr t\otimes \mu_{\mathrm{Frag}}(\ddr \pi)\otimes \#$ where $\#$ denotes the counting measure on $\mathbb{N}$. Let $\pi$ be an exchangeable random partition independent of $\mathrm{PPP}_F$ and $\mathrm{PPP}_C$. For any $m\geq 1$, define the process $(\Pi^{m}(t),t\geq 0)$ as follows: $\Pi^{m}(0)=\pi_{|[m]}$ and 
\begin{align*}
\Pi^{m}(t)&=\mathrm{Coag}\left(\Pi^{m}(t-),\pi^{c}_{|[m]}\right) \text{ if } (t,\pi^{c}) \text{\,\, is an atom of } \mathrm{PPP}_C, \\
\Pi^{m}(t)&=\mathrm{Frag}\left(\Pi^{m}(t-),\pi^{f}_{|[m]},j\right) \text{ if } (t,\pi^{f},j) \text{\,\, is an atom of } \mathrm{PPP}_F,
\end{align*}
where for any partitions $\pi$, $\pi^{c}$, $\pi^{f}$
\begin{align*}
\mathrm{Coag}(\pi,\pi^{c})&:=\{\cup_{j\in \pi^{c}_i}\pi_j, i\geq 1\} \text{ and } \mathrm{Frag}(\pi,\pi^{f},j):=\{\pi_j\cap\pi^{f}_i, i\geq 1;\ \pi_{\ell}, \ell\neq j\}^{\downarrow}
\end{align*}
where $\{\cdots\}^{\downarrow}$ means that we reorder the blocks by their least elements. See Bertoin's book \cite{coursbertoin} for fundamental properties of the operators $\mathrm{Coag}$ and $\mathrm{Frag}$.  The processes $(\Pi^{m}(t),t\geq 0)_{m\geq 1}$ are compatible in the sense that for any $m\geq n\ge 1$, $$(\Pi^{m}(t)_{|[n]},t\geq 0)=(\Pi^{n}(t),t\geq 0).$$ This ensures the existence of a process $(\Pi(t),t\geq 0)$ on $\mathcal{P}_\infty$ such that for all $m\geq 1$
\[(\Pi(t)_{|[m]},t\geq 0)=(\Pi^{m}(t),t\geq 0).\]
The process $(\Pi(t),t\geq 0)$ is an exchangeable EFC process started from the exchangeable random partition $\Pi(0)=\pi$. Among other results, Berestycki has shown that the $\mathcal{P}_\infty$-valued process $(\Pi(t),t\geq 0)$ is a c\`adl\`ag Feller process. 
%In particular, it is a strong Markov process.
%Many other interesting results are given in \cite{Berestycki04}.

In this article, we will focus on EFC processes in which there are no multiple simultaneous mergings, as in a $\Lambda$-coalescent, fragmentations occur at finite rate and dislocate any blocks into sub-blocks of infinite size. Formally, the coagulation measure charges partitions with only one non-singleton block, and the fragmentation measure  $\mu_{\mathrm{Frag}}$ on $\mathcal{P}_\infty$ has finite mass, i.e. $\mu_{\mathrm{Frag}}(\mathcal{P}_\infty)<\infty$, and only charges partitions whose blocks are all of infinite size.
%We stress that at any fragmentation event, only one block is dislocated (typically in a ``huge" amount of pieces).
According to \cite[Proposition 2.11]{cdiEFC}, if $(\Pi(t),t\geq 0)$ is a simple EFC process then the process $(\#\Pi(t),t\geq 0)$ has right-continuous sample paths in $\bar{\mathbb{N}}=\mathbb{N}\cup\{\infty\}$, the one-point compactification of $\mathbb{N}$. It is important to notice that the map $\pi\in \mathcal{P}_\infty \mapsto \#\pi$ is neither continuous with respect to the metric $d$, nor injective, see e.g. \cite[Remark 2.7]{cdiEFC}. This entails in particular that the process $(\#\Pi(t),t\geq 0)$ is not clearly Markovian. 

%For the sake of simplicity, we exclude from our study the initial exchangeable random partitions containing singletons. 
Following Bertoin's terminology, see \cite[Chapter 2.3]{coursbertoin}, we call any exchangeable partition with no singleton blocks (namely with no \textit{dust}) \textit{a proper partition}. A simple application of the paint-box construction of exchangeable partitions allows one to construct a proper initial random exchangeable partition $\pi$ with $\#\pi=n$ almost-surely for any $n\in \bar{\mathbb{N}}$. Since by the assumption, simple EFC processes have homogeneous fragmentations and the fragmentation measure has its support on partitions containing no singletons, the simple EFC process $(\Pi(t),t\geq 0)$, when started from a proper partition, stays proper at any time. We refer to Bertoin \cite{Bertoin2003} for the fact that there is no formation of dust in homogeneous fragmentation processes.
%Recall also that since $\Pi(t)$ is exchangeable for all $t\geq 0$, if $\#\Pi(t)<\infty$, then for all $j\leq \#\Pi(t)$, $\#\Pi_j(t)=\infty$ a.s. Namely $\Pi(t)$ is proper.}}

Denote by $\tau_\infty^{+}:=\inf\{t>0; \#\Pi(t-)=\infty\}$ the first explosion time of $(\#\Pi(t),t\geq 0)$. According to \cite[Proposition 2.11]{cdiEFC}, for any $n\in \mathbb{N}$, the process $(\#\Pi(t),t <\tau_\infty^{+})$ started from $\#\Pi(0)=n$ is a Markov process, and we denote its law by $\mathbb{P}_n$. The dynamics of $(\#\Pi(t),t<\tau_{\infty}^{+})$ can be explained from those of $(\Pi(t),t\geq 0)$ as follows.
%Consider a finite measure $\Lambda$ over $[0,1)$ with possibly a Dirac mass at $0$: $\Lambda(\{0\})=c_{\mathrm{k}}\geq 0$. The measure $\Lambda$ will govern the coalescences in the system. The coefficient $c_{\mathrm{k}}$ represents the Kingman part controlling binary coagulations. As fragmentation is assumed to occur at finite rate, the number of sub-blocks  created by a fragmentation is governed by the finite measure $\mu$ on $\bar{\mathbb{N}}$. We call $\mu$ the fragmentation measure. Once renormalized by its total mass $\mu(\bar{\mathbb{N}})$, the fragmentation measure corresponds simply to the law of the number of new fragments created after a fragmentation event. At this stage, we shall not make any assumption on the finite measure $\mu$ (no moment assumption).
\vspace*{1mm}

\textit{Coalescence.} Associate to each atom of $\mathrm{PPP}_C$, $(t,\pi^{c})$, a sequence of random variables $(X_i,i\geq 1)$ such that $X_i=1$ if $\{i\}\notin \pi^{c}$ and $X_i=0$ if $\{i\}\in \pi^{c}$. The random variables $(X_i,i\geq 1)$ are mixtures of i.i.d Bernoulli random variables with parameter $x$ whose ``intensity" is of the form $x^{-2}\Lambda(\ddr x)$ for some finite measure $\Lambda$ on $[0,1]$. Upon the arrival of an atom of $\mathrm{PPP}_C$,  given $\#\Pi(t-)=n$, all blocks whose index $j\in [n]$ satisfies $X_{j}=1$ are merged. Given the parameter $x$ of the $X_i$'s, the number of blocks that merge at time $t$ has a binomial law with parameters $(n,x)$. Therefore, for any $k\in [|2,n|]$ the jump
%Let $k\in [|2,n|]$, any $k$ given blocks of $\Pi(t-)$ merge (to form one block) at rate $\int_{(0,1)}y^{k}(1-y)^{n-k}y^{-2}\Lambda(\ddr x)$. Therefore at time $t$
\begin{equation}\label{multiplejump} \#\Pi(t)=\#\Pi(t-)-(k-1). \end{equation}
has rate $\binom{n}{k}\lambda_{n,k}$ where we recall $\lambda_{n,k}:=\int_{[0,1]}x^{k}(1-x)^{n-k}x^{-2}\Lambda(\ddr x)$.
%Since there are $\binom{n}{k}$ possibilities for choosing $k$ blocks among $n$, the rate of the jump \eqref{multiplejump} is $\binom{n}{k}\int_{(0,1)}x^{k}(1-x)^{n-k}x^{-2}\Lambda(\ddr x)$.
%\end{itemize}
Binary coalescences are hidden in the description above. They are governed by the Kingman parameter $\Lambda(\{0\})=:c_{\mathrm{k}}\geq 0$. We shall always assume that $\Lambda$ has no mass at $1$, so that it is impossible for all the blocks to coagulate simultaneously at once.
%we see that the process $(\#\Pi(t),t\geq 0)$ jumps from $n$ to $n-k+1$ for any $k\in [|2,n|]$ at rate $\binom{n}{k}\lambda_{n,k}$ where
%\[\lambda_{n,k}:=\int_{(0,1)}y^{k}(1-y)^{n-k}y^{-2}\Lambda(\ddr y)+\mathbbm{1}_{\{k=2\}}c_{\mathrm{k}}=\int_{[0,1)}y^{k}(1-y)^{n-k}y^{-2}\Lambda(\ddr y)\]
\vspace*{1mm}

\textit{Fragmentation}. Associate to each atom of $\mathrm{PPP}_F$, $(t,\pi^{f},j)$, the random variable $k:=\#\pi^{f}-1$. This provides a Poisson point process on $\mathbb{R}_{+}\times \bar{\mathbb{N}}\times \mathbb{N}$ with intensity $dt\otimes \mu \otimes \#$, where $\mu$ is the image of $\mu_{\mathrm{Frag}}$ by the map $\pi\mapsto \#\pi-1$.  Upon the arrival of an atom $(t,\pi^{f},j)$, given $\#\Pi(t-)=n$, if $j\leq n$, then the
$j^{\mathrm{th}}$-block is fragmentated into $k+1$ blocks. Therefore, at time $t$,
\begin{equation}\label{fragmentation} \#\Pi(t)=\#\Pi(t-)+k.
\end{equation}
Since there are $n$ blocks at time $t-$, the total rate at which a jump of the form \eqref{fragmentation} occurs is  $n\mu(k)$ for any $k\in \bar{\mathbb{N}}$.

The generator of $(\#\Pi(t),t\geq 0)$ acts on functions on $\mathbb{N}$ as follows:
%\begin{lemma}\label{generatorlemma} Its  generator acts on any map $g:\mathbb{N}\mapsto \mathbb{R}_+$ as follows
%\\
\begin{equation}\label{generator}  \mathcal{L}g:=\mathcal{L}^{c}g+\mathcal{L}^{f}g
\end{equation}
with for $n\in \mathbb{N}$
\begin{equation*} \mathcal{L}^{c}g(n):=\sum_{k=2}^{n}\binom{n}{k}\lambda_{n,k}[g(n-k+1)-g(n)] \text{ and }
\mathcal{L}^{f}g(n):=n\sum_{k=1}^{\infty}\mu(k)[g(n+k)-g(n)]
\end{equation*}
where $\mathcal{L}^{c}g(n)$ vanishes if $n=1$.
%We stress that the description above merely gives the dynamics of the block-counting process when it evolves in $\mathbb{N}$.
%{\red{We stress also}} that the generator $\mathcal{L}$ does not make explicit the possible behaviors near the boundary $\infty$. In particular, 

When $\mu(\infty)=0$, the generator $\mathcal{L}$ is conservative and $\infty$ cannot be reached by a single jump. However, it might still happen that infinitely many fragmentations have accumulated and pushed the sample path to ``reach"  $\infty$. Following the usual terminology for continuous-time Markov chains (CTMCs for short), we call this event \textit{explosion}. 
%In the case $\mu(\infty)=0$, 
The question whether explosion occurs or not, requires a deep study of the generator $\mathcal{L}$. This is the main goal of the article and from now on we shall always assume $\mu(\infty)=0$. 

It is important to notice that since the partition-valued process $(\Pi(t),t\geq 0)$ has an infinite lifetime (namely, is defined at any time $t$), the process $(\#\Pi(t),t\geq 0)$ is also well-defined at any time $t$, as a process evolving in $\bar{\mathbb{N}}$, and typically is defined after explosion (if explosion occurs). Last, in view of the possible jumps of the block-counting process, if $\mu(\mathbb{N})>0$ then $\mathbb{N}$ is a communication class for the process $(\#\Pi(t),t\geq 0)$. Indeed, let $n_0$ and $n_1$ be two integers, if $n_1<n_0$, the process started from $n_0$ can reach $n_1$ by a coalescence of $n_0-n_1+1$ blocks. If $n_1>n_0$ and there is $k\in \mathbb{N}$ such that $\mu(k)>0$, then after $n_1$ jumps of size $k$, the process started at $n_0$ has reached the state $n_0+kn_1$, and from the latter can reach $n_1$ by a coalescence involving $n_0+(k-1)n_1+1$ blocks.
\begin{remark}\label{Markovatinfinity}
Since the map $\pi\mapsto \#\pi$ is not continuous with respect to the metric $d$, the Markov property of $(\#\Pi(t),t\geq 0)$ at any time $t$ such that $\#\Pi(t)=\infty$ is not straightforward. Indeed, the law of $(\#\Pi(t+s),s\geq 0)$ could depend on $\Pi(t)$ and not only on the fact that $\#\Pi(t)=\infty$. We refer to Kyprianou et al. \cite[Lemma 3.4]{kyprianou2017}, see also \cite[Remark 2.14]{cdiEFC} for more details. We stress that our proofs only make use of the Markov properties of the processes $(\Pi(t),t\geq 0)$ and $(\#\Pi(t),t<\tau_\infty^{+})$. The Markov property of the block-counting process actually holds true and is established by a duality relationship with $\Lambda$-Wright-Fisher process with selection, see \cite[Theorem 3.7]{WFselection}.
\end{remark}

Later on, we shall also be interested in the process $(\#\Pi(t),t\geq 0)$ when $(\Pi(t),t\geq 0)$ is started from an  exchangeable partition with infinitely many blocks. We recall here how to define on the same probability space as $(\Pi(t),t\geq 0)$, a monotone coupling of $(\Pi(t),t\geq 0)$ in the first initial blocks, when all blocks of $\Pi(0)$ are infinite. 
%
%Assume $\Pi(0)$ proper and $\#\Pi(0)=\infty$ a.s. Let $n\in \bar{\mathbb{N}}$. Set $(\Pi^{(n)}(t),t\geq 0)$ the process  started from $\Pi^{(n)}(0):=\{\Pi_1(0),\cdots, \Pi_n(0)\}$ and defined at any time $t>0$, by its restrictions, as follows: for any $m\geq 1$ $\Pi^{(n)}(t)_{|[m]}=\Pi^{(n),m}(t)$ where
%\begin{align*}
%\Pi^{(n),m}(t)&=\mathrm{Coag}\left(\Pi^{(n),m}(t-),\pi^{c}_{|[m]}\right) \text{ if } (t,\pi^{c}) \text{ is an atom of } \mathrm{PPP}_C,\\
%\Pi^{(n),m}(t)&=\mathrm{Frag}\left(\Pi^{(n),m}(t-),\pi^{f}_{|[m]},j\right) \text{ if } (t,\pi^{f},j)\, \text{ is an atom of } \mathrm{PPP}_F.
%% \text{ such that } \pi^{f}_{|[m]}\neq 1_{[m]},.
%\end{align*}
%Loosely speaking, the process $(\Pi^{(n)}(t),t\geq 0)$ follows the coalescences and fragmentations in the first $n$ initial blocks of $\Pi$. We refer to \cite{cdiEFC} for details on the construction. 
Assume $\Pi(0)$ proper and $\#\Pi(0)=\infty$ a.s. Let $n\in \bar{\mathbb{N}}$. Set $(\Pi^{(n)}(t),t\geq 0)$ the process  started from $\Pi^{(n)}(0):=\{\Pi_1(0),\cdots, \Pi_n(0)\}$ contructed from $\mathrm{PPP}_C$ and $\mathrm{PPP}_F$ as follows:
% for any $m\geq 1$ $\Pi^{(n)}(t)_{|[m]}=\Pi^{(n),m}(t)$ where
\begin{align*}
\Pi^{(n)}(t)&=\mathrm{Coag}\left(\Pi^{(n)}(t-),\pi^{c}\right) \text{ if } (t,\pi^{c}) \text{ is an atom of } \mathrm{PPP}_C,\\
\Pi^{(n)}(t)&=\mathrm{Frag}\left(\Pi^{(n)}(t-),\pi^{f},j\right) \text{ if } (t,\pi^{f},j)\, \text{ is an atom of } \mathrm{PPP}_F.
% \text{ such that } \pi^{f}_{|[m]}\neq 1_{[m]},.
\end{align*}
Loosely speaking, the process $(\Pi^{(n)}(t),t\geq 0)$ follows the coalescences and fragmentations in the first $n$ initial blocks of $\Pi$. We refer to \cite[Lemma 3.3]{cdiEFC} for details on the Poisson construction. 

The following lemma will play a crucial role in our proofs. See \cite[Lemma 3.4]{cdiEFC}.
\begin{lemma}\label{monotonicity} Assume $\Pi(0)$ proper. Almost-surely for all $n\geq 1$ and all $t\geq 0$, $$\#\Pi^{(n)}(t)\leq \#\Pi^{(n+1)}(t)$$ and $\underset{n\rightarrow \infty}{\lim} \#\Pi^{(n)}(t)=\#\Pi(t)$ a.s.  Letting $\tau_\infty^{+}$ be the first explosion time of $(\#\Pi^{(n)}(t),t\geq 0)$, then for all $n\in \mathbb{N}$, $(\#\Pi^{(n)}(t),t< \tau_\infty^{+})$ has the same law as $(\#\Pi(t),t< \tau_\infty^{+})$ when $\#\Pi(0)=n$.
%, that is to say it has law $\mathbb{P}_n$.
\end{lemma}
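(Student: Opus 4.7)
All three assertions will be read off the joint Poisson construction: $\Pi$, $\Pi^{(n)}$ and $\Pi^{(n+1)}$ are driven by the same atoms of $\mathrm{PPP}_C$ and $\mathrm{PPP}_F$ and differ only in their initial condition, so the monotonicity and the identification of the limit can be obtained pathwise, atom by atom. The equality in law is then a direct consequence of the fact that $\Pi^{(n)}$ is itself a simple EFC process started from $n$ proper (infinite) blocks.

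\noindent
\textbf{Monotonicity.} I plan to establish $\#\Pi^{(n)}(t)\le \#\Pi^{(n+1)}(t)$ by induction on the common atoms acting on the two processes. Set $N:=\#\Pi^{(n)}(t-)$ and $N':=\#\Pi^{(n+1)}(t-)$; at $t=0$ the inequality reduces to $n\le n+1$, and between atoms both counts are constant. At a coalescence atom $(t,\pi^{c})$, whose unique non-singleton block $S$ selects the blocks to be merged, a short case analysis on $|S\cap[N]|$ and $|S\cap[N']|$ shows that the post-atom value of $\#\Pi^{(n+1)}-\#\Pi^{(n)}$ remains nonnegative; the key elementary observation is $|S\cap[N']|-|S\cap[N]|\le N'-N$, which handles in particular the awkward case where only $\Pi^{(n+1)}$ sees enough indices of $S$ to actually coalesce. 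At a fragmentation atom $(t,\pi^{f},j)$, splitting adds $\#\pi^{f}-1$ blocks only to a process having at least $j$ blocks, which, since $N\le N'$, can only widen the gap. Right-continuity of the sample paths of $\#\Pi^{(n)}$ and the local finiteness of atoms before explosion propagate the inequality to every $t\ge 0$ almost surely.

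\noindent
\textbf{Convergence.} By monotonicity, $\ell(t):=\lim_{n\to\infty}\#\Pi^{(n)}(t)\in \bar{\mathbb{N}}$ exists a.s. The inequality $\ell(t)\le \#\Pi(t)$ is clear from the coupling, as each block of $\Pi^{(n)}(t)$ is identified with a distinct block of $\Pi(t)$. For the converse, I would fix $m\in\mathbb{N}$ and set $n_{0}(m):=\min\{n\ge 1:[m]\subseteq \Pi_{1}(0)\cup\cdots\cup\Pi_{n}(0)\}$, which is a.s.~finite because $\Pi(0)$ is a partition of $\mathbb{N}$. For $n\ge n_{0}(m)$, one checks that $\Pi^{(n)}(t)_{|[m]}=\Pi(t)_{|[m]}$ by induction over the atoms that actually act on a block meeting $[m]$: for such atoms, the relative ordering of the concerned blocks by least element is inherited from the ordering on $[m]$ and is therefore common to $\Pi^{(n)}$ and $\Pi$, so $\mathrm{Coag}$ and $\mathrm{Frag}$ produce the same effect on the two restrictions. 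Letting $m\to\infty$ and using $\#\pi=\lim_{m\to\infty}\#\pi_{|[m]}$ yields $\#\Pi(t)\le \ell(t)$.

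\noindent
\textbf{Equality in law and main obstacle.} Since $\{\Pi_{1}(0),\ldots,\Pi_{n}(0)\}$ is a proper exchangeable partition with $n$ blocks, the process $(\Pi^{(n)}(t),t\ge 0)$, driven by the same coagulation and fragmentation measures as $\Pi$, is itself (after a measurable relabelling of $\Pi_{1}(0)\cup\cdots\cup\Pi_{n}(0)$ onto $[|1,n|]$ applied to the initial partition and pushed through the Poisson construction) a simple EFC process started from a proper partition with $n$ blocks; hence its block-counting process has, before explosion, the law of $(\#\Pi(t),t<\tau_{\infty}^{+})$ under $\mathbb{P}_{n}$, the block count being a measurable functional of the partition. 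The principal obstacle in the whole program is the convergence step: both $\mathrm{Coag}$ and $\mathrm{Frag}$ are indexed by the ordering of blocks by least element, and the extra blocks present in $\Pi$ but absent from $\Pi^{(n)}$ may reorder the indexing globally, so a naive coupling does not give $\Pi^{(n)}(t)=\Pi(t)_{|A_{n}}$; the restriction to $[m]$ together with the choice of $n\ge n_{0}(m)$ freezes the relevant part of the indexing and makes the inductive identification of the two restrictions tractable.
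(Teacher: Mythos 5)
The paper does not actually prove this lemma: it is quoted from \cite[Lemma 3.4]{cdiEFC}, so there is no in-paper proof to compare against and I assess your argument on its own terms. Your architecture --- pathwise coupling through the common atoms of $\mathrm{PPP}_C$ and $\mathrm{PPP}_F$, a pure cardinality argument for the monotonicity, a restriction-to-$[m]$ identification for the convergence, and a relabelling of $A_n:=\Pi_1(0)\cup\cdots\cup\Pi_n(0)$ onto $\mathbb{N}$ for the equality in law --- is the natural one, and you correctly isolate the genuine pitfall: the least-element indexing of blocks is not preserved between $\Pi^{(n)}$ and $\Pi$, so $\Pi^{(n)}(t)\neq\Pi(t)_{|A_n}$ in general. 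Your fix for the convergence is sound, since the blocks meeting $[m]$ are exactly the first $\#(\pi_{|[m]})\leq m$ blocks of either partition, so for $n\geq n_0(m)$ the effective atoms for the two restrictions coincide and form a locally finite set for all time.

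Three points nonetheless need repair. First, at a fragmentation atom your counting silently assumes that the dislocated block is always cut into exactly $\#\pi^{f}$ nonempty pieces, so that both processes gain the same number $\#\pi^{f}-1$ of blocks; this is precisely where properness is load-bearing (cf.\ Remark \ref{properremark}), and one must actually verify that every (infinite) block of the current partition meets every block of the independent exchangeable proper mark $\pi^{f}$ almost surely (paint-box representation plus Borel--Cantelli), together with preservation of properness in time --- without this the increments at fragmentation atoms could differ between $\Pi^{(n)}$ and $\Pi^{(n+1)}$ and the monotonicity computation breaks. Second, your justification of $\lim_n\#\Pi^{(n)}(t)\leq\#\Pi(t)$ (``each block of $\Pi^{(n)}(t)$ is identified with a distinct block of $\Pi(t)$'') is exactly the naive block-to-block coupling that you yourself refute in your final paragraph; the correct route is to rerun your cardinality argument with $\Pi$ in the role of $\Pi^{(n+1)}$, allowing $N'=\infty$, which is preserved through any single coalescence atom because $\Lambda(\{1\})=0$. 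Third, the atom-by-atom induction is only valid up to the first accumulation time of effective atoms, i.e.\ up to the first explosion of $\#\Pi^{(n+1)}$, whereas the lemma asserts the inequality for all $t\geq0$; the applications in this paper (Lemmas \ref{lemmaexplosion} and \ref{regularforitselflemma}) only use the coupling before explosion, but as a proof of the stated lemma the post-explosion regime requires a separate argument. A minor further point: in the equality in law the relabelling must also be pushed onto the fragmentation marks (restriction of $\pi^{f}$ to the independent infinite set $A_n$ preserves both its law and its number of blocks), and the target of your relabelling should be $\mathbb{N}$ rather than $[|1,n|]$.
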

\begin{remark}\label{properremark}
This is necessary that the initial partition $\Pi(0)$  is proper, namely that each of its block is infinite, for $(\#\Pi^{(n)}(t),0\leq t< \tau_\infty^{+})$ to have the same law as $(\#\Pi(t),0\leq t< \tau_\infty^{+})$ started from $n$. Indeed, if a fragmentation event, at a time, say $t>0$, involves a block at time $t-$ which is finite, then the number of sub-blocks created after dislocating this block would depend on the block shape at time $t-$, and the Markov property would be lost.  See Proposition 2.11 in \cite{cdiEFC} and its proof.
%Lemma \ref{monotonicity} has been used in the proof of Lemma \ref{lemmaexplosion} and Lemma \ref{regularforitself}.
%The study of EFC processes started from  improper partitions (namely partitions with dust) seems requiring other arguments than those developed here and in \cite{cdiEFC}. We shall not address this question here.
\end{remark}

%where it is shown that for any arbitrary function $f:\mathbb{N}\mapsto \mathbb{R}_+$,
\subsection{Coming down from infinity for $\Lambda$-coalescent processes}\label{CDIcoal} \

Recall the Poisson description given in Section \ref{EFCbasics} and consider a pure $\Lambda$-coalescent process $(\Pi(t),t\geq 0)$. 
%Pitman \cite[Theorem 8]{Pitman99} has shown that when started from an improper partition $\Pi(0)$ (i.e. with singleton blocks), the singleton blocks immediately coalesce, namely $\Pi(t)$ is proper for any $t>0$, if and only $\int_0^1x^{-1}\Lambda(\ddr x)=\infty$.   
Pitman \cite[Proposition 23]{Pitman99} has established a zero-one law for the coming down from infinity of $\Lambda$-coalescents. Under the assumption $\Lambda(\{1\})=0$, when started from a partition with infinitely many blocks, either the process comes down from infinity almost-surely, or it stays infinite:
\begin{equation}\label{dichotomy}\exists t>0, \#\Pi(t)<\infty \text{ a.s. or } \forall t\geq 0, \#\Pi(t)=\infty \text{ a.s}.\end{equation}
Pitman \cite{Pitman99} has also shown that when a $\Lambda$-coalescent comes down from infinity, then provided $\Lambda(\{1\})=0$, it does it instantaneously a.s. that is to say if we set $\tau_\infty^{-}:=\inf\{t>0; \#\Pi(t)<\infty\}$, then $\tau_\infty^{-}=0$ a.s. The following necessary and sufficient condition for coming down from infinity of $\Lambda$-coalescents was discovered by Schweinsberg \cite{Schweinsberg00}.  Define for any  $n\geq 2$,
%%Set $\Phi$  the following map, defined for any $n\geq 2$,
\begin{equation}\label{phi}
\Phi(n):=\sum_{k=2}^{n}(k-1)\binom{n}{k}\lambda_{n,k}. 
%\text{ for any } n\geq 2
\end{equation} The $\Lambda$-coalescent $(\Pi(t),t\geq 0)$ comes down from infinity if and only if
\begin{equation} \label{Schweinsberg} \sum_{n=2}^{\infty}\frac{1}{\Phi(n)}<\infty \qquad \text{ (Schweinsberg's condition).} \end{equation}
The map $\Phi$ will play an important role in the sequel and we recall some of its properties.
For any $n\geq 2$, $\Phi(n)$ represents the rate of the total reduction of the block-counting process $(\#\Pi(t),t\geq 0)$, when it starts from $n$. We stress first that simple binomial calculations entail that for any $n\geq 2$
\begin{equation}\label{phi2}
\Phi(n)=\frac{c_{\mathrm{k}}}{2}n(n-1)+\int_{]0,1[}\left((1-x)^{n}+nx-1\right)x^{-2}\Lambda(\ddr x).
\end{equation}
One can also check that the map $n\mapsto \Phi(n)/n$ is non-decreasing and by the inequalities \[(1-x)^{n}+nx-1\leq e^{-nx}+nx-1\leq \frac{n^2}{2}x^2\]
for any $x\in [0,1]$, we see that $\Phi(n)\leq \Psi(n)\leq \frac{\Lambda([0,1])}{2}n^2$ for all $n\geq 2$, with
\begin{equation}\label{psi}\Psi(n)=\frac{c_\mathrm{k}}{2} n^{2}+\int_{0}^{1}(e^{-nx}-1+nx)x^{-2}\Lambda(\ddr x).
\end{equation}

We mention also the equivalence $\Phi(n)\underset{n\rightarrow \infty}{\sim} \Psi(n).$
We refer for these properties of the function $\Phi$ to Berestycki's book \cite[Chapter 4]{Beresbook} and Limic and Talarczyk \cite[Lemma 2.1]{limic2015}. In particular, when the measure $x^{-2}\Lambda(\ddr x)$ is regularly varying near $0$, the Tauberian theorem and the equivalence above ensure that $\Phi$ is regularly varying at $\infty$. For instance, if $\Lambda$ satisfies \eqref{regularlambda} then $\Phi(n)\underset{n\rightarrow \infty}{\sim} dn^{1+\beta}$ with $d:=c\frac{\Gamma(1-\beta)}{\beta(\beta+1)}>0$ where $\Gamma$ is the Gamma function.
%More generally, we will  consider coalescence measures $\Lambda$ for which
%\begin{equation}\label{regularvaryingphi}\Phi(n)\underset{n\rightarrow \infty}{\sim} dn^{1+\beta}(\log n)^{\alpha}
%\end{equation}
%for some $\beta\in [0,1)$ and $\alpha\in \mathbb{R}$. 
This covers for instance the Beta-coalescent, see e.g. \cite{Beresbook}.
% $(\alpha=0)$ and the Bolthausen-Sznitman coalescent $(\beta=0, \alpha=1)$.
%%The process is said to come down from infinity, if started from a partition with infinitely many blocks, it reaches a partition with only finitely one at a positive time almost-surely.
%%When there is no fragmentations in the system,
%In the regularly-varying case \eqref{regularvaryingphi}, if $\beta>0$ or $\beta=0$ and $\alpha>1$, we see that Schweinsberg condition is satisfied, so that the process comes down from infinity.
%We finally mention that when a $\Lambda$-coalescent comes down from infinity, it does by following a deterministic path, called the speed. We stress however that we shall not make use of the speed function in this article.

\subsection{Coming down from infinity for simple EFC processes}\label{CDIEFC}\

The coming down from infinity of simple EFC processes, namely the possibility to visit partitions with finitely many blocks, when started from a partition with an infinite number of blocks, has been studied in \cite{cdiEFC}. As noticed in Berestycki \cite{Berestycki04} and in \cite{cdiEFC}, the dichotomy \eqref{dichotomy} also holds when fragmentations are added. 
%Of course, a necessary condition for the EFC process to come down is that the pure coalescent does come down.
%We shall always work under the assumption \eqref{Schweinsberg}.
%\[\sum_{n\geq 2}\frac{1}{\Phi(n)}<\infty.\]
Similar to a pure $\Lambda$-coalescent (under the assumption $\Lambda(\{1\})=0$), an EFC either comes down from infinity instantaneously or stays infinite.
\begin{proposition}[Lemma 2.5 in \cite{cdiEFC}]\label{instantaneouscdi} Under the assumptions  of Theorem \ref{cditheorem}. Let $\tau_\infty^{-}:=\inf\{t>0; \#\Pi(t)<\infty\}$. Then $\mathbb{P}(\tau_{\infty}^{-}=0)=1$ or $\mathbb{P}(\tau_{\infty}^{-}=\infty)=1$.
\end{proposition}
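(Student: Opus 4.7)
The plan is to combine two ingredients: the Markov property of $(\#\Pi(t), t \geq 0)$ at the boundary state $\infty$ (established via duality with the $\Lambda$-Wright-Fisher process, cf.\ Remark \ref{Markovatinfinity} and \cite[Theorem 3.7]{WFselection}), and Blumenthal's $0$-$1$ law applied to the Feller process $\Pi$ on $\mathcal{P}_\infty$. Setting $q(t) := \mathbb{P}(\tau_\infty^- > t) = \mathbb{P}(\#\Pi(s) = \infty \text{ for all } s \in [0,t])$, the first ingredient will produce a multiplicative functional equation for $q$, while the second will trivialize the germ event $\{\tau_\infty^- = 0\}$.

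Concretely, I would first apply the Markov property of $\#\Pi$ at $\infty$ at a fixed time $s$: on the event $\{\#\Pi(u) = \infty, \forall u \in [0,s]\} \subseteq \{\#\Pi(s) = \infty\}$, the conditional law of $(\#\Pi(s + u), u \geq 0)$ coincides with that of $(\#\Pi(u), u \geq 0)$ started from $\infty$. Splitting $[0,s+t]$ at $s$ and taking expectation yields $q(s+t) = q(s)\, q(t)$, hence $q(t) = e^{-\lambda t}$ for some $\lambda \in [0,\infty]$ (using right-continuity of $q$, itself coming from the c\`adl\`ag property of $\#\Pi$). Next, I would observe that the event
\[
\{\tau_\infty^- = 0\} = \bigcap_{n \geq 1}\bigl\{\exists\,t \in (0,1/n) : \#\Pi(t) < \infty\bigr\}
\]
lies in the germ $\sigma$-algebra $\mathcal{F}_{0+}^{\Pi}$ of the Feller process $\Pi$, since the constituent events are in $\mathcal{F}_{1/n}$ and the intersection is decreasing. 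Blumenthal's $0$-$1$ law then gives $\mathbb{P}(\tau_\infty^- = 0) \in \{0,1\}$. Together with the exponential form of $q$ and the dichotomy \eqref{dichotomy} inherited from Pitman, the a priori possibilities reduce to $\lambda = \infty$ ($\tau_\infty^- = 0$ a.s.), $\lambda = 0$ ($\tau_\infty^- = \infty$ a.s.), or the intermediate case $\lambda \in (0,\infty)$ giving $\tau_\infty^- \sim \operatorname{Exp}(\lambda)$.

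The main obstacle is to eliminate this intermediate case. I expect this to be done by exploiting the structural constraint imposed by $\Lambda(\{1\}) = 0$ and $\mu(\infty) = 0$: every jump of $(\#\Pi(t), t \geq 0)$ driven by a single atom of $\mathrm{PPP}_C$ or $\mathrm{PPP}_F$ is of finite size, so no single Poisson event transports $\#\Pi$ from $\infty$ to a finite value. Consequently any transition out of $\infty$ must proceed via an instantaneous accumulation of infinitely many coalescence atoms, which is incompatible with a finite positive exponential rate at $\infty$. This forces $\lambda \in \{0,\infty\}$ and produces exactly the stated alternative. Making this last step fully rigorous will be the delicate point, as it amounts to showing that the boundary $\infty$ cannot be a ``regular state'' in the CTMC sense for $\#\Pi$.
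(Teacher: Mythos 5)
First, a point of reference: the paper does not prove Proposition \ref{instantaneouscdi} here at all --- it is imported verbatim from \cite[Lemma 2.5]{cdiEFC}, whose argument is an adaptation of Pitman's proof of the analogous dichotomy for pure $\Lambda$-coalescents \cite[Proposition 23]{Pitman99}. Measured against that, your Blumenthal step is sound and standard, and reducing to a multiplicative equation $q(s+t)=q(s)\,q(t)$, hence $q(t)=e^{-\lambda t}$, is a reasonable skeleton. Two caveats on the set-up, though. You invoke the Markov property of $(\#\Pi(t),t\ge 0)$ \emph{at the state} $\infty$, which Remark \ref{Markovatinfinity} explicitly flags as delicate and which the paper deliberately avoids (``our proofs only make use of the Markov properties of the processes $(\Pi(t),t\geq 0)$ and $(\#\Pi(t),t<\tau_\infty^{+})$''); the multiplicativity is better extracted from the Markov/cocycle structure of the partition-valued Feller process $\Pi$ together with exchangeability, which is both lighter and not anachronistic relative to \cite{cdiEFC}. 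Moreover your identity $q(s+t)=q(s)q(t)$ tacitly assumes that the conditional law of the future given $\mathcal F_s$ on $\{\tau_\infty^->s\}$ depends only on the fact that $\#\Pi(s)=\infty$ and coincides with the original entrance law; that is precisely the subtle point of Remark \ref{Markovatinfinity} and needs to be argued, not assumed.

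The genuine gap is the elimination of the intermediate case $\lambda\in(0,\infty)$. The observation that, since $\Lambda(\{1\})=0$ and $\mu(\infty)=0$, no single atom of $\mathrm{PPP}_C$ or $\mathrm{PPP}_F$ can carry $\#\Pi$ from $\infty$ to a finite value only shows that the descent at time $\tau_\infty^-$ is produced by an accumulation of infinitely many coalescence atoms. This is \emph{not} incompatible with $\tau_\infty^-\sim\mathrm{Exp}(\lambda)$ for some $\lambda\in(0,\infty)$: an exit time generated by an accumulation of atoms is in general a totally inaccessible stopping time, and exponential times of finite positive parameter are exactly of that type, so neither quasi-left-continuity nor the absence of a single exit jump yields a contradiction. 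The heuristic ``accumulation $\Rightarrow$ rate $0$ or $\infty$'' is false as a general principle, and you do not supply the ingredient that actually closes the case --- as you yourself anticipate in calling it the delicate point. The ingredient used in the Pitman-type argument is a zero--one law \emph{at each fixed time}: for fixed $t$, the event $\{\#\Pi(s)=\infty \text{ for all } s\le t\}$ is invariant under permutations of finitely many integers, and the paint-box plus Poissonian construction of Section \ref{EFCbasics} is exchangeable in those integers, so by the Hewitt--Savage zero--one law $q(t)\in\{0,1\}$ for every $t$; combined with the multiplicativity (or simply with monotonicity of $q$ and a Markov restart) this forces $\lambda\in\{0,\infty\}$ and gives exactly the stated alternative. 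Without this (or an equivalent) fixed-time zero--one law, your proof is incomplete at its crux.
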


Set the two following parameters
\[\theta^{\star}:=\underset{n\rightarrow \infty}{\limsup}\sum_{k=1}^{\infty}\frac{n\bar{\mu}(k)}{\Phi(n+k)}\in [0,\infty] \text{ and } \theta_{\star}:=\underset{n\rightarrow \infty}{\liminf}\sum_{k=1}^{\infty}\frac{n\bar{\mu}(k)}{\Phi(n+k)}\in [0,\infty].\]
\begin{theorem}[Theorem 1.1 in \cite{cdiEFC}]\label{cditheorem} Let $(\Pi(t),t\geq 0)$ be a simple EFC process started from an exchangeable random partition such that $\#\Pi(0)=\infty$.

Assume $\sum_{n=2}^{\infty}\frac{1}{\Phi(n)}<\infty$.
\begin{itemize}
\item If $\theta^{\star}<1$, then $(\Pi(t),t\geq 0)$ comes down from infinity a.s.
\item If $\theta_{\star}>1$, then $(\Pi(t),t\geq 0)$ stays infinite a.s.
\end{itemize}
\end{theorem}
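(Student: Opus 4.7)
The plan is to build a Lyapunov function from the summable sequence $(1/\Phi(k))_{k\geq 2}$ provided by Schweinsberg's condition, and to exploit it through the monotone coupling of Lemma \ref{monotonicity} together with the dichotomy of Proposition \ref{instantaneouscdi}. Define
\[h(n) := \sum_{k=n+1}^{\infty}\frac{1}{\Phi(k)}, \qquad h(\infty) := 0,\]
which is positive, strictly decreasing on $\mathbb{N}$, bounded by $h(1) < \infty$, and continuous at $\infty$.

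The first step is to estimate $\mathcal{L}h = \mathcal{L}^c h + \mathcal{L}^f h$. For the coalescent part, the telescoping $h(n-k+1) - h(n) = \sum_{\ell=n-k+2}^{n} 1/\Phi(\ell)$ combined with the monotonicity of $\Phi$ (easily checked from \eqref{phi2} by differencing) yields $1/\Phi(\ell)\geq 1/\Phi(n)$ for $\ell\leq n$, whence
\[\mathcal{L}^c h(n)\geq \frac{1}{\Phi(n)}\sum_{k=2}^n(k-1)\binom{n}{k}\lambda_{n,k} = 1.\]
For the fragmentation part, $h(n+k) - h(n) = -\sum_{\ell=n+1}^{n+k} 1/\Phi(\ell)$, and a swap of summation order gives $\mathcal{L}^f h(n) = -n\sum_{j\geq 1}\bar\mu(j)/\Phi(n+j)$. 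Combining,
\[\mathcal{L}h(n)\geq 1 - n\sum_{j\geq 1}\frac{\bar\mu(j)}{\Phi(n+j)},\]
and $\mathcal{L}h$ is uniformly bounded below on $\mathbb{N}$, since the right-hand side is (one uses the crude $n\sum_j\bar\mu(j)/\Phi(n+j)\leq n\bar\mu(1)h(n)$ for bounded $n$, and finiteness of $\theta^\star$ for large $n$).

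For the coming-down direction ($\theta^\star<1$), there exist $\delta>0$ and $N_0$ with $\mathcal{L}h(n)\geq\delta$ for all $n\geq N_0$. I argue by contradiction: suppose $\#\Pi(t)=\infty$ for all $t>0$ a.s. Then by Lemma \ref{monotonicity}, $\#\Pi^{(n_0)}(t)\uparrow\infty$ a.s., so by the continuity of $h$ at $\infty$ and dominated convergence, $\mathbb{E}[h(\#\Pi^{(n_0)}(t))]\to 0$. But Dynkin's formula for the Markov chain $\#\Pi^{(n_0)}$ (whose law matches that of $\#\Pi$ started at $n_0$) yields
\[\mathbb{E}[h(\#\Pi^{(n_0)}(t))] = h(n_0) + \int_0^t \mathbb{E}[\mathcal{L}h(\#\Pi^{(n_0)}(s))]\,ds,\]
and Fatou applied to the non-negative $\mathcal{L}h+K$ (with $-K$ the uniform lower bound), using $\liminf_{n_0}\mathcal{L}h(\#\Pi^{(n_0)}(s))\geq\delta$ a.s. from the contradictory hypothesis, gives $\liminf_{n_0}\mathbb{E}[h(\#\Pi^{(n_0)}(t))]\geq\delta t>0$, a contradiction. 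Proposition \ref{instantaneouscdi} then forces $\tau_\infty^-=0$ a.s.

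For the staying-infinite direction ($\theta_\star>1$), the plan is symmetric: derive a matching upper bound $\mathcal{L}^c h(n)\leq 1+o(1)$ so that $\mathcal{L}h(n)\leq -\delta<0$ for all large $n$. Then $h(\#\Pi^{(n_0)}(t\wedge T^{(n_0)}_M))$ is a supermartingale up to the entry time $T^{(n_0)}_M := \inf\{t: \#\Pi^{(n_0)}(t)\leq M\}$ for any $M\geq N_0$, so that $h(M)\,\mathbb{P}(T^{(n_0)}_M\leq t)\leq h(n_0)\to 0$. Through the coupling $\#\Pi(s)\geq\#\Pi^{(n_0)}(s)$ this forces $\#\Pi(s)>M$ for all $s\leq t$ a.s.; letting $M,t\to\infty$ yields $\#\Pi\equiv\infty$. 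The main obstacle here is the upper bound on $\mathcal{L}^c h$: the monotonicity estimate $1/\Phi(\ell)\geq 1/\Phi(n)$ has no tight reverse (it can be arbitrarily far from equality), so finer asymptotic control on $\Phi$ is needed, e.g., via the equivalence $\Phi\sim\Psi$ from \eqref{psi} and regular-variation type arguments.
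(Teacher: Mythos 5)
A preliminary remark: the statement you are proving is imported verbatim from \cite{cdiEFC} (it is ``Theorem 1.1'' there) and the present paper gives no proof of it, so I can only measure your argument against what a complete proof would require. Your choice of Lyapunov function $h(n)=\sum_{k=n+1}^{\infty}1/\Phi(k)$ is certainly the right one: the computation $-\mathcal{L}^{f}h(n)=n\sum_{j\geq 1}\bar{\mu}(j)/\Phi(n+j)$ recovers exactly the quantity whose $\limsup$ and $\liminf$ define $\theta^{\star}$ and $\theta_{\star}$, and the lower bound $\mathcal{L}^{c}h(n)\geq 1$ via the monotonicity of $\Phi$ is correct. Nevertheless, each of the two directions has a genuine gap.

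For the coming-down direction, you apply Dynkin's formula to $h(\#\Pi^{(n_0)}(t))$ over all of $[0,t]$ without stopping at the explosion time of $\#\Pi^{(n_0)}$. Under $\theta^{\star}<1$ explosion of the chain started from a finite state is genuinely possible --- this is precisely the regular regime of Theorem \ref{stablefragtheorem}, where coming down from infinity and explosion coexist --- and the validity of the generator identity across $\tau_\infty^{+}$ is exactly the delicate point flagged in Remark \ref{Markovatinfinity}. Concretely, $\mathcal{L}h(\infty)$ is undefined, so your pointwise claim $\liminf_{n_0}\mathcal{L}h(\#\Pi^{(n_0)}(s))\geq \delta$ does not even parse on the event $\{\#\Pi^{(n_0)}(s)=\infty\}$; and if instead you stop at $\tau_\infty^{+}$, the inequality you obtain is $h(1)\geq \delta\,\mathbb{E}[t\wedge\tau_\infty^{+}]-K\,\mathbb{E}[\int_0^{t}\mathbbm{1}_{\{\#\Pi^{(n_0)}(s)<N_0\}}\ddr s]$, which becomes vacuous precisely when explosion from large states is instantaneous (Lemma \ref{tregular}). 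Some additional idea is needed here --- e.g.\ a first-passage/occupation-time argument that treats the excursions between explosion times separately --- and you supply none.

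For the staying-infinite direction the gap is the one you yourself point out, but it is fatal as written: the theorem assumes nothing beyond $\sum_n 1/\Phi(n)<\infty$, whereas you defer the needed upper bound on $\mathcal{L}^{c}h$ to ``regular-variation type arguments'' that are not among the hypotheses. The bound can in fact be obtained in full generality, but it requires two ingredients absent from your proposal: (a) first discard the coalescences removing a proportion larger than $p$ of the blocks by stopping at $\sigma_p$ as in \eqref{sigmap} (these form an a.s.\ locally finite set of atoms with $\inf_n\sigma_p^{(n)}>0$, so they can be handled separately), which restricts the sum to $k\leq np$ and gives $h(n-k+1)-h(n)\leq (k-1)/\Phi(\lceil(1-p)n\rceil)$; and (b) replace regular variation by the universal scaling inequality $\Psi(\lambda n)\geq\lambda^{2}\Psi(n)$ for $\lambda\in(0,1)$ together with $\Phi\leq\Psi\leq\Phi+Cn$ and the fact that $\sum_n 1/\Phi(n)<\infty$ forces $\Phi(n)/n\to\infty$; this yields $\mathcal{L}^{c,p}h(n)\leq (1-p)^{-2}+o(1)$, which is beaten by $\theta_{\star}>1$ once $p$ is small. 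Without both steps, a single large coalescence can contribute as much as $h(1)$ to $\mathcal{L}^{c}h(n)$ and the supermartingale property fails; so as it stands the second bullet is not established.
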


The parameters $\theta^{\star}$ and $\theta_\star$ are somewhat intricate since both arguments $n$ and $k$ are not separated in the sum. However they coincide in many regular cases (if so, we denote the value by $\theta$) and can be computed explicitely when both  $\mu$ and $\Phi$ have regular variations.

%As we shall need it later we recall in the next proposition the value of $\theta$ in the regularly varying case.
%Notice that the conditions make use of the function $n\mapsto \ell(n)$, defined in Section \ref{explosionbranchingsec}.

%\begin{proposition}[Sufficient conditions, \cite{cdiEFC}]\label{classcdi}\
%\begin{itemize}
%\item[(1)] If $\sum_{n=2}^{\infty}\frac{n}{\Phi(n)}\bar{\mu}(n)<\infty$ then $\theta=0$.
%%, see \cite[Corollary 1.4]{cdiEFC}.
%\item[(2)] If $\frac{n\ell(n)}{\Phi(n)}\underset{n\rightarrow \infty}\longrightarrow \infty$, then $\theta=\infty$.
%%, see \cite[Remark 4.2]{cdiEFC}.
%%\item[(1)] If
%\item[(3)] If $\Phi(n)\underset{n\rightarrow \infty}{\sim} dn^{\beta+1}, \beta\in (0,1]$ and $\bar{\mu}(n)n^{1-\beta}\underset{n\rightarrow \infty}{\longrightarrow} 0$, then $\theta=0$.
%\end{itemize}
%\end{proposition}
\begin{proposition}[Regularly-varying cases, Proposition 1.6 in \cite{cdiEFC}]\label{regularcdi}\

%, see \cite[Lemma 4.3]{cdiEFC}.
If $\Phi(n)\underset{n\rightarrow \infty}{\sim} dn^{\beta+1}, \beta\in (0,1]$ and $\mu(n)\underset{n\rightarrow \infty}{\sim} \frac{b}{n^{\alpha+1}}$ with $\alpha \in (0,1)$ and $b>0$, then 
\begin{enumerate}
\item $\theta=\infty$ for $\beta<1-\alpha$, 
\item  $\theta=0$ for  $\beta>1-\alpha$,
\item $\theta=\frac{b}{d}\frac{1}{\alpha(1-\alpha)}\in (0,\infty)$ for $\beta=1-\alpha$.
\end{enumerate}  
%\item[(5)] If $\Phi(n)\underset{n\rightarrow \infty}{\sim} dn \log(n)^{\beta}, \beta\in (1,\infty)$ and $\bar{\mu}(n)\underset{n\rightarrow \infty}{\sim} \lambda \frac{(\log n)^{\alpha} }{n}$ with $\alpha \in \mathbb{R}$, then
%\begin{enumerate}
%\item when $\beta<1+\alpha, \theta=\infty$,
%\item when $\beta>1+\alpha, \theta=0$,
%\item when $\beta=1+\alpha, \theta=\frac{\lambda}{d(1+\alpha)}\in (0,\infty)$.
%\end{enumerate}
%\end{itemize}
\end{proposition}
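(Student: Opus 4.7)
The plan is to establish that the full series
\[
S_n := \sum_{k=1}^{\infty}\frac{n\bar\mu(k)}{\Phi(n+k)}
\]
actually admits a limit as $n\to\infty$, so that automatically $\theta^{\star}=\theta_{\star}=:\theta$ and the three regimes fall out of a single asymptotic formula. I would first upgrade the pointwise asymptotic of $\mu$ to its tail: since $\mu(j)\sim bj^{-(1+\alpha)}$ with $\alpha\in(0,1)$, comparison with the integral gives $\bar\mu(k)\sim\frac{b}{\alpha}k^{-\alpha}$ as $k\to\infty$. Combined with $\Phi(n+k)\sim d(n+k)^{1+\beta}$, this reduces the problem to the deterministic sum
\[
T_n := \sum_{k=1}^{\infty}\frac{nk^{-\alpha}}{(n+k)^{1+\beta}},
\qquad \text{with } S_n \sim \frac{b}{\alpha d}\,T_n.
\]

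Next I would perform the change of index $u=k/n$ to expose $T_n$ as a Riemann sum of mesh $1/n$:
\[
T_n \;=\; n^{1-\alpha-\beta}\cdot\frac{1}{n}\sum_{k=1}^{\infty}\frac{(k/n)^{-\alpha}}{(1+k/n)^{1+\beta}}.
\]
The function $u\mapsto u^{-\alpha}(1+u)^{-(1+\beta)}$ is integrable on $(0,\infty)$ (the singularity at $0$ is handled by $\alpha<1$ and the tail by $\alpha+\beta>0$), so the Riemann sum converges to the Beta integral $B(1-\alpha,\alpha+\beta)=\frac{\Gamma(1-\alpha)\Gamma(\alpha+\beta)}{\Gamma(1+\beta)}$, yielding
\[
S_n \;\sim\; \frac{b}{\alpha d}\,B(1-\alpha,\alpha+\beta)\,n^{1-\alpha-\beta}.
\]
The trichotomy is then immediate from the sign of $1-\alpha-\beta$: if $\beta<1-\alpha$ the prefactor diverges and $\theta=\infty$; if $\beta>1-\alpha$ it vanishes and $\theta=0$; in the critical case $\beta=1-\alpha$ the Beta value collapses to $B(1-\alpha,1)=1/(1-\alpha)$, whence $\theta=\frac{b}{d}\cdot\frac{1}{\alpha(1-\alpha)}$.

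The main obstacle is rigorously justifying the Riemann-sum passage on a non-compact interval with a mild singularity at $0$. I would handle this by splitting at thresholds $\epsilon n$ and $Kn$. On the bulk $[\epsilon n,Kn]$ the summand is bounded and continuous, so standard Riemann approximation applies. On $\{k\leq\epsilon n\}$, the crude bound $k^{-\alpha}(n+k)^{-(1+\beta)}\leq k^{-\alpha}n^{-(1+\beta)}$ together with $\sum_{k\leq\epsilon n}k^{-\alpha}\asymp(\epsilon n)^{1-\alpha}$ produces a contribution of order $\epsilon^{1-\alpha}$ that is uniform in $n$. On $\{k\geq Kn\}$, the bound $k^{-\alpha}(n+k)^{-(1+\beta)}\leq k^{-(1+\alpha+\beta)}$ together with $\alpha+\beta>0$ yields a uniformly small tail. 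Letting $\epsilon\downarrow 0$ and $K\uparrow\infty$ after $n\to\infty$ recovers the integral and concludes the argument.
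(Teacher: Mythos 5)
Your argument is correct. Note that this paper does not actually prove Proposition \ref{regularcdi}: it is imported verbatim from Proposition 1.6 of \cite{cdiEFC}, so there is no in-paper proof to compare against; your write-up amounts to an independent verification. The chain of steps is sound: the tail equivalence $\bar\mu(k)\sim\frac{b}{\alpha}k^{-\alpha}$, the reduction to the Riemann sum, the identification of the limit as $B(1-\alpha,\alpha+\beta)$ via $v=u/(1+u)$, and the specialisation $B(1-\alpha,1)=1/(1-\alpha)$ in the critical case all check out, and the $\epsilon n$/$Kn$ splitting correctly controls the singularity at $0$ (contribution $O(\epsilon^{1-\alpha})$ after normalising by $n^{1-\alpha-\beta}$) and the tail (contribution $O(K^{-(\alpha+\beta)})$). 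One small presentational caveat: you write $S_n\sim\frac{b}{\alpha d}T_n$ before justifying it, but replacing $\bar\mu(k)$ and $\Phi(n+k)$ by their asymptotic equivalents uniformly inside an infinite sum is exactly what the subsequent splitting is needed for. The cleaner order is to run the three-zone decomposition directly on $S_n$, using global two-sided bounds $c_1k^{-\alpha}\le\bar\mu(k)\le c_2k^{-\alpha}$ and $c_1(n+k)^{1+\beta}\le\Phi(n+k)\le c_2(n+k)^{1+\beta}$ (valid for large arguments) on the two extreme zones, and the uniform equivalences only on the bulk $[\epsilon n,Kn]$ where $k\to\infty$ with $n$. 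This is a matter of exposition, not a gap; since the limit of $S_n$ exists, you also correctly obtain $\theta^{\star}=\theta_{\star}=\theta$ in all three regimes.
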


According to Theorem \ref{cditheorem}, if $b/d<\alpha(1-\alpha)$, i.e $\theta<1$,  then the process comes down from infinity. We shall see later that it does not always entail the non-explosion of the process $(\#\Pi(t),t\geq 0)$, see the last statement in Theorem \ref{stablefragtheorem}.

In the next section, we briefly summarize some results on the explosion of pure branching processes.
\subsection{Explosion in branching processes}\label{explosionbranchingsec}\

Consider an immortal pure branching process $(N_t,t\geq 0)$ with offspring measure $\mu$, namely a process whose generator is $\mathcal{L}^{f}$. It is well-known that some of these processes can explode in finite time even though the generator is conservative, i.e $\mu(\infty)=0$. If one denotes by $\varphi$ the generating function of the renormalized measure $\mu(\cdot)/\mu(\mathbb{N})$, then explosion occurs if and only if \[\int^{1}\frac{\ddr x}{x-\varphi(x)}<\infty \quad (\text{Dynkin's condition}).\]
We refer the reader to Harris' book \cite[Chapter V, Section 9, Theorem 9.1]{Harris}.
We now recall a necessary and sufficient condition due to Doney \cite{Doney1984} and Schuh \cite{Schuh82}.
%For any $k\in \mathbb{N}$, call  $\bar{\mu}(k):=\mu(\{k,k+1,...\})$, the tail of $\mu$. Define the map \begin{equation}\label{elldef} \ell:n \mapsto\sum_{k=1}^{n}\bar{\mu}(k).
for any $n\geq 1$,
\begin{equation}\label{elldef} \ell(n):=\sum_{k=1}^{n}\bar{\mu}(k),\end{equation}
where for any $k\in \mathbb{N}$, $\bar{\mu}(k):=\mu(\{k,k+1,...\})$. The process $(N_t,t\geq 0)$ whose generator is $\mathcal{L}^f$, see \eqref{generator}, explodes if and only if
\begin{equation}\label{explosionbranching} \sum_{n=1}^{\infty}\frac{1}{n\ell(n)}<\infty \quad (\text{Doney's condition}).\end{equation}
We mention that Doney's condition cannot be simplified, as Grey \cite{Grey1989} has shown that explosion of a pure branching process cannot be expressed in terms of a moment condition on $\mu$. A simple application of Fubini's theorem shows that
\begin{equation}\label{ellformula}
\ell(n)=\sum_{k=1}^{\infty} (k \wedge n) \mu(k)=\sum_{k=1}^{n}k\mu(k)+n\bar{\mu}(n+1).
\end{equation}
%Note also that if $\bar{\mu}(k)\underset{k\rightarrow \infty}{\sim} \frac{1}{k^{\alpha}}$ with $\alpha\in (0,1)$, then $n\ell(n)\underset{n\rightarrow \infty}{\sim} \frac{1}{1-\alpha}n^{2-\alpha}$ and the branching process explodes.

Doney's condition does not require to work with the generating function of $\mu$ and can be used for building many examples of explosive branching processes.
If for instance, $\ell(n)\geq (\log n)^{r}$ for large enough $n$, with $r>1$, then \eqref{explosionbranching} is satisfied and the branching process $(N_t,t\geq 0)$ explodes almost-surely. Similarly if $\ell(n)\leq (\log n)^{r}$ for large enough $n$ with $r\leq 1$, then  \eqref{explosionbranching} is not satisfied and the process does not explode.
%In the particular setting of the branching processes,
%We call condition $\mathbb{H}$, the following: there exists a non-decreasing function $g$ such that $\int^{\infty}\frac{\ddr x}{xg(x)}<\infty$ and
%\begin{equation*}
%\ell(n)\geq g(\log n) \log n \text{ for large enough } n. \qquad (\mathbb{H})
%\end{equation*}

We now introduce a condition on the map $\ell$.

\noindent Condition $\mathbb{H}$: there exists an eventually non-decreasing function $g$ such that $\int^{\infty}\frac{\ddr x}{xg(x)}<\infty$ and
\begin{equation*}
\ell(n)\geq g(\log n) \log n \text{ for large enough } n. \qquad (\mathbb{H})
\end{equation*}
This latter condition covers a rather broad class of splitting measures since for instance, all measures $\mu$ for which, for large enough $n$  \[\ell(n)\geq (\log^{k} n)^{r}\log^{k-1}n\times\cdots \times \log^{2} n \log n,\] with $k\geq 1$ and $r>1$ (where $\log^{k}$ denotes the $k$-iterated logarithm), satisfy $\mathbb{H}$. We also stress that if $\mu$ satisfies \eqref{regularmu} with $\alpha\in (0,1)$ then
\begin{center}
$\ell(n)\underset{n\rightarrow \infty}{\sim} \frac{b}{\alpha(1-\alpha)}n^{1-\alpha}$ for some $b>0$.
\end{center}
In particular, if $\mu$ satisfies \eqref{regularmu}, then condition $\mathbb{H}$ is satisfied.

A simple comparison of the series $\sum_{n\geq 1}\frac{1}{n\ell(n)}$ with the integral $\int^{\infty}\frac{\ddr x}{xg(x)}$ shows that if $\mathbb{H}$ holds then Doney's condition for explosion \eqref{explosionbranching} is satisfied. Condition $\mathbb{H}$ will enable us 
%to consider  a broad class of splitting measures $\mu$ and 
to find rather sharp conditions for explosion when coalescences are taken into account.

\begin{remark}\label{rem}
\begin{enumerate}
\item Doney's proof is based on the representation of the explosion time of a branching process as a perpetual integral for a continuous-time left-continuous random walk. Such approach for studying explosion is classical for processes that are obtained through random time changes of  other processes. We refer for instance to the recent works of D\"oring and Kyprianou \cite{DoeringKyprianou}, K\"uhn \cite{Kuhn} and Li and Zhou \cite{2018arXiv180905759L}.
We shall not follow this approach here but will look for sufficient conditions based on ``local" estimates on the generator. We will establish in the forthcoming Section \ref{suffcondexplctmc}, a new sufficient condition for explosion of continuous-time Markov chains, see Theorem \ref{CTMCtheorem}.  This condition can be seen as belonging to the methods of Lyapunov functions, see e.g. Chow and Khasminskii \cite{Chow} and Menschikov and Petretis \cite{menshikov} for recent works on this approach.
\item 
%We are not aware of any proof of Doney's result based on Lyapunov functions.   Setting $\ell(x)=\ell(\lfloor x\rfloor)$ for all $x\geq 1$, we see that if $x\mapsto \ell(e^{x}) /x$ is non-decreasing then $\ell$ satisfies $\mathbb{H}$ with $g(x):=\ell(e^{x})/x$ as soon as $\sum_{n\geq 1}\frac{1}{n \ell(n)}<\infty$. There are however examples of measures $\mu$, satisfying Doney's condition \eqref{explosionbranching} for which this function $g$ is not monotone.
We are not aware of any proof of Doney's result based on Lyapunov functions.   However, by setting $\ell(x)=\ell(\lfloor x\rfloor)$ for all $x\geq 1$,  we observe that if $x\mapsto \ell(e^{x}) /x$ is non-decreasing, then $\ell$ satisfies $\mathbb{H}$ with $g(x):=\ell(e^{x})/x$ as soon as Doney's condition holds : $\sum_{n\geq 1}\frac{1}{n \ell(n)}<\infty$. There are nevertheless examples of measures $\mu$, satisfying Doney's condition \eqref{explosionbranching} for which this function $g$ is not monotone.
\end{enumerate}
\end{remark}

%We will find some sufficient conditions on the fragmentation and the coagulation measures entailing non-explosion or explosion.
%%Several sufficient conditions for $\theta^{\star}=0$ and $\theta_{\star}=\infty$ or $\theta_\star,\theta^{\star}\in (0,\infty)$ were found in given.
%We then study in details the regularly varying cases, as given in Proposition \ref{regularcdi} and find some new phase transition phenomena.
%and cannot leave it.

%We design a new sufficient condition for explosion.
\section{Main results}\label{results}
Consider a simple EFC process $(\Pi(t),t\geq 0)$  with coalescence measure $\Lambda$ and splitting measure $\mu$. We recall that the boundary $\infty$ is said to be an exit (respectively, an entrance), if the process $(\#\Pi(t),t\geq 0)$ can reach $\infty$ but can not leave from $\infty$ (respectively, can leave from $\infty$ but can not reach $\infty$).  The boundary $\infty$ is called regular if the process  $(\#\Pi(t),t\geq 0)$ can both enter $\infty$ and leave $\infty$. By convention, when we say that $\infty$ is an entrance (respectively an exit), it is always implicitely assumed that $\#\Pi(0)=\infty$ (respectively $\#\Pi(0)<\infty$). 
%is proper and has infinitely many blocks.

We first provide some general conditions on the coalescence measure $\Lambda$ and the splitting measure $\mu$ ensuring that the boundary $\infty$ is either an exit or an entrance.

\begin{theorem}[Explosion and exit]\label{suffcondpropexit}  If $n\mapsto \ell(n)$ satisfies condition $\mathbb{H}$ and \[\underset{n\rightarrow \infty}{\lim}\  \frac{\Phi(n)}{n\ell(n)}=0,\] then 
%the process $(\#\Pi(t),t\geq 0)$ explodes and stays infinite almost-surely:
%\begin{center}
$\infty$ is an exit boundary.
%\end{center}
\end{theorem}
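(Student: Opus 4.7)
The claim that $\infty$ is an exit boundary breaks into two parts: (i) starting from any finite $n\in\mathbb{N}$, the process $\#\Pi$ reaches $\infty$ in finite time a.s.\ (explosion), and (ii) starting from an exchangeable proper partition with $\#\Pi(0)=\infty$, one has $\#\Pi(t)=\infty$ for every $t\geq 0$ a.s.\ (no coming down from infinity). I would treat these two facets separately.

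\textit{Part (i): explosion.} The plan is to apply the new sufficient condition for explosion of continuous-time Markov chains, Theorem \ref{CTMCtheorem} (or rather its Corollary \ref{genexplosionmaincor} specialised to $(\#\Pi(t),\,t<\tau_\infty^{+})$). Condition $\mathbb{H}$ already implies Doney's condition for the pure branching process with offspring $\mu$, so the fragmentation part alone is explosive, and the extra hypothesis $\Phi(n)/(n\ell(n))\to 0$ quantifies that the coalescence drift $\Phi(n)$ is negligible compared with the upward drift $n\ell(n)$ produced by the fragmentations. These two ingredients should let one produce a Lyapunov function (built from $\ell$, or from the iterated-logarithm quantity supplied by $\mathbb{H}$) for which the hypotheses of Theorem \ref{CTMCtheorem} can be verified. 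The delicate step, flagged in the introduction, is to handle the potentially large downward jumps of the $\Lambda$-coalescent, whose size can be comparable to the state itself. Once explosion has positive probability from some state, irreducibility of $\mathbb{N}$ as a communication class (noted just after \eqref{generator}) and the strong Markov property upgrade this to almost sure explosion from every $n\in\mathbb{N}$.

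\textit{Part (ii): no coming down from infinity.} I would split according to whether Schweinsberg's condition \eqref{Schweinsberg} holds. If $\sum_{n\geq 2}1/\Phi(n)=\infty$, the pure $\Lambda$-coalescent $\Pi_{c}$ driven by $\mathrm{PPP}_{C}$ from $\Pi(0)$ stays infinite a.s. From the Poisson construction of Section \ref{EFCbasics}, at every time $\Pi(t)$ is a refinement of $\Pi_{c}(t)$ (fragmentations can only split blocks, while $\mathrm{Coag}$ acts monotonically on partitions), whence $\#\Pi(t)\geq \#\Pi_{c}(t)=\infty$ for all $t>0$ a.s. If instead $\sum_{n\geq 2}1/\Phi(n)<\infty$, I would check that $\theta_{\star}=\infty$ and invoke Theorem \ref{cditheorem}. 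Using that $k\mapsto \Phi(k)/k$ is non-decreasing (recalled in Section \ref{CDIcoal}), one has $\Phi(n+k)\leq \Phi(2n)$ for $1\leq k\leq n$, and hence
\[
\sum_{k=1}^{\infty}\frac{n\,\bar{\mu}(k)}{\Phi(n+k)}\;\geq\;\frac{n\bigl(\ell(n)-n\bar{\mu}(n+1)\bigr)}{\Phi(2n)}.
\]
Combined with a doubling-type control on $\Phi$ (to be extracted from the explicit expression \eqref{phi2} and the quadratic upper bound $\Phi\leq \Psi$ in \eqref{psi}) and with the hypothesis $\Phi(n)/(n\ell(n))\to 0$, this lower bound tends to $+\infty$, so $\theta_{\star}=\infty>1$; Theorem \ref{cditheorem} together with the $0/\infty$ dichotomy of Proposition \ref{instantaneouscdi} then gives $\tau_{\infty}^{-}=\infty$ a.s.

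\textit{Expected main obstacle.} The main difficulty is in part (i): producing a test function that lets Theorem \ref{CTMCtheorem} absorb the large downward jumps of the $\Lambda$-coalescent while remaining compatible with the upward drift captured by $\ell$ and $\mathbb{H}$. Part (ii) should be more routine; the secondary technical point is the doubling control on $\Phi$ in the subcase $\sum 1/\Phi(n)<\infty$, which should be read off from \eqref{phi2}--\eqref{psi}.
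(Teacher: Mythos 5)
Your overall architecture coincides with the paper's: explosion is obtained from the new CTMC criterion (Corollary \ref{genexplosionmaincor}) and upgraded to almost-sure explosion by irreducibility and the Markov property, while ``no coming down from infinity'' is handled by splitting on Schweinsberg's condition and, when $\sum_n 1/\Phi(n)<\infty$, by bounding $\theta_\star$ from below by $\liminf_n n\ell(n)/\Phi(2n)$ together with a doubling bound $\Phi(2n)\leq 4\Phi(n)$ and invoking Theorem \ref{cditheorem}. Part (ii) of your proposal is essentially the paper's argument (carried out inside the proof of Lemma \ref{lemmaexplosion}).

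However, Part (i) contains a genuine gap precisely at the point you flag as ``the delicate step'' and then leave unresolved. Corollary \ref{genexplosionmaincor} cannot be applied directly to the full generator $\mathcal{L}$ of $(\#\Pi(t),t<\tau_\infty^{+})$: with $g_a(n)=n^{1-a}$ and $a>1$, the coalescence contribution involves terms $\bigl(1-\tfrac{k-1}{n}\bigr)^{1-a}-1$, and for $k$ comparable to $n$ one has $\bigl(1-\tfrac{k-1}{n}\bigr)^{-a}\approx n^{a}$, so the mean-value bound only yields $-G_a^{c}(n)\lesssim (a-1)\,n^{a-1}\Phi(n)$ rather than the needed $(a-1)\Phi(n)/n$; the hypothesis $\Phi(n)/(n\ell(n))\to 0$ is then useless for controlling $\gamma_a$. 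The paper's resolution, which is the heart of the proof of Lemma \ref{lemmaexplosion}, is to introduce the stopping time $\sigma_p^{(n_0)}$ at which a single coalescence removes a proportion larger than $p$ of the blocks, and to work with the truncated generator $\mathcal{L}^{p}$ whose coalescence jumps are restricted to sizes $k\leq \lfloor np\rfloor$; for this generator Lemma \ref{estimatesexplosion} gives $-G_a^{c,p}(n)\leq (a-1)(1-p)^{-a}\Phi(n)/n$ and $G_a^{f}(n)\geq 2^{-a}(a-1)\ell(n)$, so that $\gamma_a\leq 2^{a}\rho/(1-p)^{a}<1$ for $a$ close to $1$ (here $\rho=0$ under your hypotheses). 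One must then separately show, via the Poisson construction and the estimates of \cite{cdiEFC}, that $\inf_{n\geq 2}\sigma_p^{(n)}>0$ a.s., so that the truncated chain explodes \emph{before} $\sigma_p$ with probability tending to $1$ as $n_0\to\infty$, which transfers explosion back to the original process. Without this truncation-plus-control-of-$\sigma_p$ device (or some substitute for it), your Part (i) does not go through; a Lyapunov function ``built from $\ell$'' alone will not absorb downward jumps of relative size bounded away from $0$. As a minor secondary point, your refinement comparison $\#\Pi(t)\geq\#\Pi_c(t)$ in the case $\sum_n 1/\Phi(n)=\infty$ needs some care (the $\mathrm{Coag}$ operator acts on block labels, which differ between the two processes), though the paper also asserts this conclusion without detail.
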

%{\blue{
\begin{remark}
Under the assumption of Theorem \ref{suffcondpropexit}, the process $(\Pi(t),t\geq 0)$ reaches in a finite time almost-surely, a proper partition with infinitely many blocks. 
\end{remark}
%}}
A direct application of Theorem \ref{suffcondpropexit} yields the following examples.
\begin{example}\label{exampleexit} Let $d>0$ and $b>0$.\
\begin{enumerate}
\item Assume that $\Phi(n)\underset{n\rightarrow \infty}{\sim} d n^{1+\beta}$ with $\beta\in (0,1)$
%and $\mu(n)\geq \frac{b}{n(\log n)^{\alpha}}$
and $\ell(n)\geq b n(\log n)^{r}$ for large enough $n$ with $r\in \mathbb{R}$. Then $\infty$ is an exit boundary.
\item Assume that $\Phi(n)\underset{n\rightarrow \infty}{\sim} d n (\log n)^{\beta}$ with $\beta>0$ and $\ell(n)\geq b(\log n)^{r}$ for large enough $n$ with $r>\beta$. Then $\infty$ is an exit boundary.
\end{enumerate}
\end{example}
\begin{theorem}[Non-explosion and entrance]\label{suffcondpropentrance}   If \[\sum_{n=2}^{\infty}\frac{n}{\Phi(n)}\bar{\mu}(n)<\infty,\] 
	then $(\#\Pi(t),t\geq 0)$ does not explode almost-surely.

If furthermore, $\sum_{n\geq 2}\frac{1}{\Phi(n)}<\infty$, then
% the EFC process, started from a proper partition with infinitely many blocks, comes down from infinity and stays finite almost-surely:
%\begin{center}
 $\infty$ is an entrance boundary.
% \end{center}
\end{theorem}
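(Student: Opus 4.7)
The proof naturally splits in two. I would first prove the non-explosion conclusion under the single summability hypothesis, and then deduce the entrance property by combining non-explosion with the coming down from infinity. For the second step, Theorem \ref{cditheorem} together with Schweinsberg's condition provides CDI, so the bulk of the work lies in establishing non-explosion and in verifying that $\theta^{\star}<1$ is implied by the two hypotheses taken together.

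\textbf{Non-explosion via Foster--Lyapunov.} Set $\tau_{k}:=\inf\{t\geq 0:\#\Pi(t)\geq k\}$, so that $\tau_{\infty}^{+}=\lim_{k\to\infty}\tau_{k}$. The plan is to exhibit a non-decreasing function $V:\mathbb{N}\to[0,\infty)$ with $V(n)\to\infty$ satisfying $\mathcal{L}V(n)\leq cV(n)+d$ on $\mathbb{N}$. A standard optional stopping argument applied to the super-martingale $e^{-ct}V(\#\Pi(t\wedge\tau_{k}))$ then yields $\mathbb{E}_{n}[e^{-c\tau_{k}}]\leq (V(n)+d/c)/V(k)$; letting $k\to\infty$ gives $\tau_{\infty}^{+}=\infty$ $\mathbb{P}_{n}$-a.s. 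The naive choice $V(n)=n$ gives $\mathcal{L}V(n)=-\Phi(n)+n\sum_{k\geq 1}k\mu(k)$ and only covers $\sum k\mu(k)<\infty$; since the hypothesis allows $\sum k\mu(k)=\infty$, a more refined $V$ is needed. I would try $V(n)=\log n$ (or a primitive of $n/\Phi(n)$). Using $\log(1-(k-1)/n)\leq -(k-1)/n$ gives $\mathcal{L}^{c}V(n)\leq -\Phi(n)/n$, while the Fubini rewriting $\mathcal{L}^{f}V(n)=n\sum_{j\geq 1}\bar{\mu}(j)(V(n+j)-V(n+j-1))$ combined with the split $j\leq n$ vs $j>n$ leads to a bound of the form
\[
\mathcal{L}V(n)\leq -\frac{\Phi(n)}{n}+\frac{1}{n}\sum_{k\leq n}k\mu(k)+n\sum_{k>n}\mu(k)\log(1+k/n).
\]
The first bracketed term is handled by monotonicity of $\Phi(n)/n$; the last term, after summation by parts, is controlled by a tail of the convergent series $\sum_{j\geq 1}j\bar{\mu}(j)/\Phi(j)$, which is exactly our hypothesis. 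This yields $\mathcal{L}V(n)\leq c V(n)+d$, and non-explosion follows.

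\textbf{Entrance boundary.} Under the extra assumption $\sum_{n\geq 2}1/\Phi(n)<\infty$, I would first observe that $\Phi(n)/n\to\infty$: indeed $\Phi(n)/n$ is non-decreasing (recalled after \eqref{phi2}), and if it were bounded one would have $\sum 1/\Phi(n)\geq C\sum 1/n=\infty$, a contradiction. I would then verify $\theta^{\star}=0$ by splitting
\[
\sum_{k\geq 1}\frac{n\bar{\mu}(k)}{\Phi(n+k)}=\sum_{k\leq n}\frac{n\bar{\mu}(k)}{\Phi(n+k)}+\sum_{k>n}\frac{n\bar{\mu}(k)}{\Phi(n+k)}.
\]
Since $\Phi$ is non-decreasing, the second sum is bounded by $\sum_{k>n}k\bar{\mu}(k)/\Phi(k)$, the tail of the hypothesis series. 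For the first sum, monotonicity of $k\mapsto k/\Phi(k)$ gives $n/\Phi(n)\leq k/\Phi(k)$ for $k\leq n$, and a truncation at a slowly growing cut-off $M(n)\to\infty$ together with $\Phi(n)/n\to\infty$ shows that $\frac{n}{\Phi(n)}\sum_{k\leq n}\bar{\mu}(k)\to 0$. Thus $\theta^{\star}=0<1$ and Theorem \ref{cditheorem} yields coming down from infinity a.s. Combining with Proposition \ref{instantaneouscdi} and the non-explosion shown above, this gives that $\infty$ is an entrance.

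\textbf{Main obstacle.} The hard part is clearly the non-explosion step. The condition $\sum_{n}n\bar{\mu}(n)/\Phi(n)<\infty$ mixes fragmentation and coalescence intensities in a single sum, so it cannot be read off from either mechanism alone: one must find a Lyapunov function whose drift $\mathcal{L}V$ naturally reproduces this quantity, balancing downward coalescent jumps of all possible sizes against heavy upward jumps. The delicate points are the contribution of large coalescent jumps (where $k$ is of order $n$) and the contribution of large upward jumps (where $k>n$), both of which require sharp summation-by-parts estimates rather than the convex/concave shortcuts that fail here.
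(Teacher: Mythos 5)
Your overall architecture is exactly the paper's: a Foster--Lyapunov bound for non-explosion, combined with coming down from infinity (Theorem \ref{cditheorem}) for the entrance statement. Your verification that $\theta^{\star}=0$ (splitting at $k=n$, monotonicity of $k/\Phi(k)$, and a Kronecker-type truncation) is correct and is a legitimate substitute for the paper's citation of \cite[Corollary 4-(2)]{cdiEFC}. The issue is in the non-explosion step, where you hedge between $V(n)=\log n$ and ``a primitive of $n/\Phi(n)$'' and then carry out the sketch with the wrong one of the two. The paper commits to $f(n):=\sum_{k=2}^{n}k/\Phi(k)$, and this choice is what makes the argument close: one gets $\mathcal{L}^{c}f(n)\leq -\frac{n}{\Phi(n)}\sum_{k}\binom{n}{k}\lambda_{n,k}(k-1)=-n$ exactly (using $\frac{n}{\Phi(n)}\leq\frac{j}{\Phi(j)}$ for $j\leq n$), and by Fubini $\mathcal{L}^{f}f(n)=n\sum_{i\geq 1}\frac{i+n}{\Phi(i+n)}\bar{\mu}(i)$, which is \emph{literally} the hypothesis series (up to the shift $i\mapsto i+n$); dominated convergence via $\frac{i+n}{\Phi(i+n)}\leq\frac{i}{\Phi(i)}$ and $\frac{n}{\Phi(n)}\to 0$ then gives $\mathcal{L}f(n)\leq 0$ for large $n$. (The case $\lim_n n/\Phi(n)>0$ has to be treated separately, but there the hypothesis forces $\sum_n\bar{\mu}(n)<\infty$ and the process is dominated by a finite-mean branching process.)

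With your primary candidate $V=\log n$ the sketch has a genuine gap. First, your displayed drift bound is off: the middle term should be $\sum_{k\leq n}k\mu(k)$, not $\frac{1}{n}\sum_{k\leq n}k\mu(k)$. More importantly, after summation by parts the large-jump term is of order $n\bar{\mu}(n)+n\sum_{k>n}\bar{\mu}(k)/k$, and neither this nor $\ell(n)=\sum_{k\leq n}\bar{\mu}(k)$ is ``a tail of the convergent series $\sum_j j\bar{\mu}(j)/\Phi(j)$'' as you assert. Relating them to the hypothesis requires additional structural input: Kronecker's lemma (with $\Phi(n)/n\uparrow\infty$) handles $\ell(n)=o(\Phi(n)/n)$, but controlling $n\sum_{k>n}\bar{\mu}(k)/k$ by $\Phi(n)/n$ needs something like the near-monotonicity of $j\mapsto\Phi(j)/j^{2}$, which you neither state nor prove; the crude bound $\bar{\mu}(k)/k\leq C\,k\bar{\mu}(k)/\Phi(k)$ (from $\Phi(k)\leq Ck^{2}$) only yields $o(n)$, which is insufficient when $\Phi(n)/n$ grows slower than linearly (e.g.\ $\Phi(n)\sim dn(\log n)^{\beta}$, a case the paper explicitly covers in Theorem \ref{logcoal}). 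The repair is simply to commit to the Lyapunov function $\sum_{k=2}^{n}k/\Phi(k)$ that you mention in passing.
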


A direct application of Theorem \ref{suffcondpropentrance} yields:
\begin{example}\label{exampleentrance} Let $d>0$.\
\begin{enumerate}
%\item Assume $\Phi(n)\underset{n\rightarrow \infty}{\sim} \frac{c}{2}n^2$ with $c>0$. If $\sum_{n\geq 1}\frac{\bar{\mu}(n)}{n}<\infty$ then $\infty$ is an entrance boundary.
\item Assume that $\Phi(n)\underset{n\rightarrow \infty}{\sim} dn^{1+\beta}$ with $\beta\in (0,1]$. If $\sum_{n\geq 1}\frac{\bar{\mu}(n)}{n^{\beta}}<\infty$, then the process does not explode and $\infty$ is an entrance boundary.
\vspace*{1mm}
\item Assume that $\Phi(n)\underset{n\rightarrow \infty}{\sim} d n (\log n)^{\beta}$ with $\beta>0$. If $\sum_{n\geq 1}\frac{\bar{\mu}(n)}{(\log n)^{\beta}}<\infty$, then the process does not explode. If furthermore $\beta>1$, then $\infty$ is an entrance boundary.

%and $\bar{\mu}(n)\leq \frac{b}{n (\log n)^{\alpha}}$ for large enough $n$, with $\alpha\in \mathbb{R}$  and $b>0$, then $\infty$ is an entrance boundary .
\end{enumerate}
\end{example}
\begin{remark} When the coalescences are driven by a pure Kingman coalescent, $\Lambda=c_{\mathrm{k}}\delta_0$ with $c_{\mathrm{k}}>0$, $\Phi(n)=c_{\mathrm{k}}\binom{n}{2}\underset{n\rightarrow \infty}{\sim} \frac{c_{\mathrm{k}}}{2}n^{2}$ and by Example \ref{exampleentrance}-(1), with $\beta=1$, we see that if $\sum_{n\geq 1}\frac{\bar{\mu}(n)}{n}<\infty$, then $\infty$ is an entrance boundary. It agrees with a log-moment assumption on $\mu$ such that $\sum_{k\geq 1}\mu(k) \log k <\infty$  and we recover with a different method a result of Lambert, see \cite[Theorem 2.3]{Lambert:2005bq}.
%In this specific setting, the process $(\#\Pi(t),t\geq 0)$ has the same dynamics as a discrete logistic branching process with offspring measure $\mu$, see \cite[Section 2.3]{Lambert:2005bq}.
%Note that $\sum_{n\geq 1}\frac{\bar{\mu}(n)}{n}<\infty$ matches with the log-moment assumption on $\mu$, $\sum_{k\geq 1}\mu(k) \log k <\infty$, called condition $(L)$ in  \cite{Lambert:2005bq}. We recover a result of Lambert, see \cite[Theorem 2.3]{Lambert:2005bq}, with a different method.
\end{remark}
Our next results treat simple EFC processes with regularly varying coagulation/splitting measures. Examples of simple EFC processes for which the boundary is regular are exhibited. 
%We find in particular a new phase transition between regimes for which the boundary $\infty$ is regular or entrance.
%For any $\alpha \in (0,1)$, set
%\[I(\alpha):=\int_{0}^{\infty}\frac{\log(1+u)}{u^{\alpha+1}}\ddr u.\] The real number $I(\alpha)$ is finite for all $\alpha\in (0,1)$ and will play a central role in the classification.

\begin{theorem}\label{stablefragtheorem} Assume that $\Phi(n)\underset{n\rightarrow \infty}{\sim} dn^{1+\beta}$ with $d>0$ and $\beta\in (0,1)$ and $\mu(n)\underset{n\rightarrow \infty}{\sim} \frac{b}{n^{\alpha+1}}$ with $b>0$ and $\alpha\in (0,\infty)$. Then $n\ell(n)\underset{n\rightarrow \infty}{\sim}\frac{b}{\alpha(1-\alpha)}n^{2-\alpha}$ and
\begin{itemize}
\item if $\alpha+\beta<1$, then 
%$(\#\Pi(t),t\geq 0)$ explodes and stays infinite almost-surely:
%\vspace*{1mm}
%\begin{center}
$\infty$ is an exit boundary,
%\end{center}
\vspace*{1mm}
\item if $\alpha+\beta>1$, then 
%$(\#\Pi(t),t\geq 0)$ does not explode and 
%when started from a partition with infinitely many blocks of infinite size, 
%comes down from infinity instantaneously almost-surely:
%\begin{center}
 $\infty$ is an entrance boundary,
% \end{center}
\vspace*{1mm}
\item if $\alpha+\beta=1$ and further,
%$\sigma:=\frac{b}{d}\frac{\pi}{\alpha \sin(\pi\alpha)}$ and $\theta:=\frac{b}{d}\frac{1}{\alpha(1-\alpha)}$ then, $\sigma>\theta$ and
\vspace*{2mm}
\begin{itemize}
\item if $b/d>\alpha(1-\alpha)$, then
% $(\#\Pi(t),t\geq 0)$ explodes and stays infinite almost-surely:
%\vspace*{1mm}
%\begin{center}
$\infty$ is an exit boundary,
\vspace*{2mm}
%\end{center}
%\vspace*{2mm}
\item if $\frac{\alpha \sin(\pi \alpha)}{\pi}<b/d<\alpha(1-\alpha)$, then
% $(\#\Pi(t),t\geq 0)$ explodes and then comes down from infinity instantaneously almost-surely:
%\vspace*{2mm}
%\begin{center}
$\infty$ is a regular boundary,
\vspace*{2mm}
%\end{center}
%\vspace*{2mm}
\item if $b/d<\frac{\alpha \sin(\pi \alpha)}{\pi}$, then 
%$(\#\Pi(t),t\geq 0)$ does not explode and
% when started from a partition with infinitely many blocks of infinite size,
%comes down from infinity instantaneously almost-surely:
%\vspace*{2mm}
%\begin{center}
$\infty$ is an entrance boundary.
%\end{center}
\end{itemize}
\end{itemize}
\end{theorem}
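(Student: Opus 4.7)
I would first derive the asymptotic $n\ell(n)\sim \frac{b}{\alpha(1-\alpha)}n^{2-\alpha}$ from identity \eqref{ellformula}: $\mu(n)\sim b n^{-1-\alpha}$ gives $\sum_{k=1}^n k\mu(k)\sim \frac{b}{1-\alpha}n^{1-\alpha}$ and $n\bar\mu(n+1)\sim \frac{b}{\alpha}n^{1-\alpha}$, and the two contributions add. The two non-critical sub-cases reduce to the general theorems of the paper. For $\alpha+\beta<1$, Theorem \ref{suffcondpropexit} applies: the ratio $\Phi(n)/(n\ell(n))\sim \frac{d\alpha(1-\alpha)}{b}n^{\alpha+\beta-1}\to 0$, and condition $\mathbb{H}$ is met by the regularly-varying $\ell$ (take $g(x)=\frac{c}{2}e^{(1-\alpha)x}/x$, eventually non-decreasing, with $\int^\infty \ddr x/(xg(x))<\infty$). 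For $\alpha+\beta>1$, Theorem \ref{suffcondpropentrance} applies: both $\sum n\bar\mu(n)/\Phi(n)\asymp\sum n^{-(\alpha+\beta)}$ and $\sum 1/\Phi(n)\asymp\sum n^{-(1+\beta)}$ converge.

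On the critical line $\alpha+\beta=1$, the coming-down-from-infinity side is handled directly by Proposition \ref{regularcdi}(3), whose parameter simplifies to $\theta=\frac{b}{d\alpha(1-\alpha)}$; Theorem \ref{cditheorem} then yields CDI iff $\theta<1$, i.e.\ $b/d<\alpha(1-\alpha)$. Hence $b/d>\alpha(1-\alpha)$ rules out an entrance (the process stays infinite when started from infinity), whereas $b/d<\alpha(1-\alpha)$ forces CDI. It remains to decide, in each of these sub-regimes, whether the block-counting process also explodes from a finite initial state, and it is here that the threshold $\alpha\sin(\pi\alpha)/\pi$ must appear.

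The crux is thus the sharp explosion/non-explosion criterion on the critical line. I would feed Theorem \ref{CTMCtheorem} the power test function $f(n)=n^{-\gamma}$ for small $\gamma>0$. The regular variation of $\mu$ gives, after a Riemann-sum approximation,
\[
\mathcal{L}^{f}f(n)\sim -\frac{b\gamma\,\Gamma(1-\alpha)\Gamma(\gamma+\alpha)}{\alpha\,\Gamma(\gamma+1)}\,n^{1-\alpha-\gamma},
\]
while the small-jump drift of the $\Lambda$-coalescent (together with the Taylor expansion $f(n-k+1)-f(n)\sim\gamma n^{-\gamma-1}(k-1)$, summed against $(k-1)\binom{n}{k}\lambda_{n,k}$) yields $\mathcal{L}^{c}f(n)\sim d\gamma\, n^{\beta-\gamma}$ of opposite sign. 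On the critical line the two are of the same order $n^{\beta-\gamma}=n^{1-\alpha-\gamma}$, so $\mathcal{L}f(n)$ has a definite sign for large $n$ according as $b/d$ lies on one side of the limiting ratio of coefficients; letting $\gamma\to 0^{+}$ and invoking the reflection formula $\Gamma(\alpha)\Gamma(1-\alpha)=\pi/\sin(\pi\alpha)$, this ratio collapses precisely onto $\alpha\sin(\pi\alpha)/\pi$. When $b/d>\alpha\sin(\pi\alpha)/\pi$, $f$ becomes a strict super-solution $\mathcal{L}f\leq-\varepsilon f$ for large $n$, which through the first-passage-time estimates of Theorem \ref{CTMCtheorem} yields explosion; when $b/d<\alpha\sin(\pi\alpha)/\pi$, $f$ is a sub-solution and a standard submartingale argument (using that $f(X_t)$ would have to converge to $f(\infty)=0$ on $\{\tau_\infty^{+}<\infty\}$, contradicting $\mathbb{E}_n[f(X_t)]\geq f(n)>0$) yields non-explosion. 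Combined with the CDI dichotomy this delivers the three critical sub-cases. The main obstacle is the rigorous treatment of the coalescence operator---downward jumps can have size comparable to $n$, ruling out a naive Taylor expansion---and extracting the Beta-function coefficient cleanly; an auxiliary technical step is to translate the Lyapunov inequality $\mathcal{L}f\leq-\varepsilon f$ into the first-passage-time format actually demanded by Theorem \ref{CTMCtheorem}.
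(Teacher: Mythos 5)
Your overall architecture coincides with the paper's: the non-critical cases via Theorems \ref{suffcondpropexit} and \ref{suffcondpropentrance}, the coming-down-from-infinity side of the critical line via Proposition \ref{regularcdi} and Theorem \ref{cditheorem}, and the explosion threshold via the power test functions $n^{-\gamma}$ (which are exactly the paper's $g_a(n)=n^{1-a}$ with $a=1+\gamma$), with the constant $\pi/(\alpha\sin(\pi\alpha))$ emerging in the limit $\gamma\to 0^{+}$ through the Beta integral and Euler's reflection formula (the paper's Lemma \ref{ineq}). Your computation of the fragmentation drift agrees with the paper's $i_\alpha(a)=\frac{a-1}{\alpha}\frac{\Gamma(1-\alpha)\Gamma(a-1+\alpha)}{\Gamma(a)}$. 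However, two points in the critical case are genuine gaps rather than routine verifications. First, the large downward jumps: you correctly flag that a Taylor expansion of $f(n-k+1)-f(n)$ fails when $k$ is comparable to $n$, but you do not resolve it. The paper's resolution is structural, not just an estimate: it replaces $\mathcal{L}^{c}$ by the truncated generator $\mathcal{L}^{c,p}$ keeping only coalescences of relative size at most $p$ (which produces the controllable factor $(1-p)^{-a}$ in Lemma \ref{estimatesexplosion}), proves explosion for the stopped process, and then shows via the Poisson construction that the first ``large'' coalescence time $\inf_n\sigma_p^{(n)}$ is a.s.\ strictly positive, so the unstopped process explodes with positive probability; a Markov-property iteration then upgrades this to almost-sure explosion. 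Without some version of this truncation-and-comparison step the claimed asymptotic $\mathcal{L}^{c}f(n)\sim d\gamma n^{\beta-\gamma}$ is not justified, and note also that $\mathcal{L}f\leq -\varepsilon f$ is far too weak for Theorem \ref{CTMCtheorem}: the condition actually needed (and actually delivered by your asymptotics) is $-\mathcal{L}f(n)/f(n)\geq \varepsilon n^{1-\alpha}$, i.e.\ $G_a(n)\geq g(\log n)\log n$ with $g(x)=ce^{(1-\alpha)x}/x$.

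Second, your non-explosion argument on the critical line does not work as stated. With $f(n)=n^{-\gamma}$, $\gamma>0$, one has $f(\infty)=0$, and the bounded-submartingale convergence only yields $\mathbb{E}_n[\lim_t f(N_{t\wedge\tau_\infty^{+}})]\geq f(n)>0$, hence that non-explosion has \emph{positive} probability; it does not exclude explosion with positive probability, which is what ``entrance'' requires. The correct device, used in the paper (Proposition \ref{genmaincornonexplosion} together with the Foster--Lyapunov criterion \eqref{lyapunov}), is the \emph{increasing} function $g_a(n)=n^{1-a}$ with $a<1$ (and $a>1-\alpha$ so that $\sum_k k^{1-a}\mu(k)<\infty$), for which $\mathcal{L}g_a(n)\leq cg_a(n)$ and $g_a(n)\to\infty$ force $\mathbb{P}_n(\tau_m^{+}\leq t)\to 0$ as $m\to\infty$. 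The same limiting constant $I(\alpha)$ appears from the other side ($j_\alpha(a)/(a-1)\to I(\alpha)$ as $a\to 1^{-}$), so your identification of the threshold is correct, but the test function must blow up at infinity for the non-explosion half. A last minor point: the asymptotic $n\ell(n)\sim\frac{b}{\alpha(1-\alpha)}n^{2-\alpha}$, and your derivation of it, only make sense for $\alpha\in(0,1)$; for $\alpha\geq 1$ one has $\ell(n)=O(\log n)$ or $O(1)$ and the case $\alpha+\beta>1$ is handled directly by Theorem \ref{suffcondpropentrance}.
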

\begin{remark} The first two statements of Theorem \ref{stablefragtheorem} are  consequences of    Theorem \ref{suffcondpropexit} and Theorem \ref{suffcondpropentrance}, respectively. The third statement is shown in Lemma \ref{part2proofregularvarytheorem}. 
%The third statement is shown deferred to Lemma \ref{part2proofregularvarytheorem}. 
%The nature of the boundary $\infty$ for the critical cases $\frac{b}{d}=\frac{\alpha \sin(\pi \alpha)}{\pi}$ and $\frac{b}{d}=\alpha(1-\alpha)$ is not known in general. 
%{\red{See however the forthcoming Proposition \ref{criticalvalue} for a study of the case $\frac{b}{d}=\frac{\alpha \sin(\pi \alpha)}{\pi}$.}}
\end{remark}

\noindent The figure below represents the different possible regimes for the boundary $\infty$, found in Theorem \ref{stablefragtheorem} when $\alpha+\beta=1$, according to the location of ratio $b/d$.
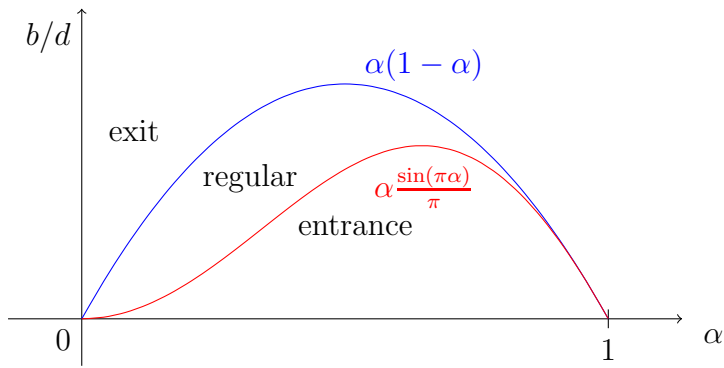
\begin{figure}[h!]
\centering 
%\noindent

\begin{tikzpicture}[xscale = 7,yscale = 12.5]
  \draw [->] (0,-0.05) -- (0,0.33);
  \draw [->] (-0.14,0) -- (1.14,0);
  \draw (0,0) node[below left] {0};
  \draw (1,0.01)--(1,-0.01) node[below] {1};
  \draw (1.2,0) node[below] {$\alpha$};
  \draw (0,0.3) node[left] {$b/d$};
  \draw [domain=0:1, smooth, variable=\x, blue] plot ({\x}, {\x*(1-\x)});
  \draw [color = blue] (0.65,0.27) node  {$\alpha(1-\alpha)$};
  \draw [domain=0:1, smooth, variable=\x, red] plot ({\x}, {\x*sin(3.14159*deg(\x))/3.14159});
  \draw [color = red] (0.65,0.14) node {$\alpha\frac{\sin(\pi \alpha)}{\pi}$};
  \draw (0.1,0.2) node {exit};
  \draw (0.32,0.15) node {regular};
  \draw (0.52,0.1) node {entrance};
\end{tikzpicture}
\caption{Boundary classification when $\Phi(n)\underset{n\rightarrow \infty}{\sim} dn^{2-\alpha}$ and $\mu(n)\underset{n\rightarrow \infty}{\sim}\frac{b}{n^{1+\alpha}}$.}
\label{graph}
\end{figure}
%The critical cases in the last statement of Theorem \ref{stablefragtheorem} for which the ratio $\frac{b}{d}$ equals $\frac{\alpha\sin(\pi \alpha)}{\pi}$ or $\alpha(1-\alpha)$ seem to be requiring finer arguments. We find in the next proposition a class of coalescence and splitting measures for which the critical value $\frac{b}{d}=\frac{\alpha\sin(\pi \alpha)}{\pi}$ can be handled.
%\begin{proposition}\label{criticalvalue} Let $b,d>0$, $\alpha\in (0,1)$ and $h$ be a measurable function on $[0,1]$ such that $h\geq 1$. Set $\beta=1-\alpha$. Assume that $\Lambda(\ddr x)= d\frac{\beta(\beta+1)}{\Gamma(1-\beta)}x^{-\beta}h(x)\ddr x$ and that $\mu(n)=\frac{b}{n^{1+\alpha}}$ for all integer $n\geq 1$. Then, when $\frac{b}{d}=\frac{\alpha\sin(\pi \alpha)}{\pi}$, the boundary $\infty$ is an entrance.
%\end{proposition}
%Explicit values of $I(\alpha)$ are for instance $I(1/4)=4\sqrt{2}\pi$,  $I(1/2)=2\pi$ and $I(3/4)=\frac{4\sqrt{2}}{3}\pi$.
%\newpage
%All regularly varying EFC processes whose ratio  index $\alpha$ is such that $\sigma>1$ and $\theta<1$ have $\infty$ regular. On Figure \ref{graph}, the values $0.525$ and $0.756$ are approximations of the two solutions of the equation $\sigma=1$ for $\alpha\in (0,1)$. If $\alpha\in (0.21,0.525)\cup (0.756,0.789)$, then the block-counting process of the associated EFC process has $\infty$ regular.

The next proposition describes more precisely the behavior of the process $(\Pi(t),t\geq 0)$, with regularly varying coalescence-splitting measures, when the block-counting process has  $\infty$ as regular boundary.
We establish that the boundary $\infty$ is regular for itself, in the sense that the block-counting process returns to $\infty$ immediately after having left it.
\begin{proposition}\label{regularforitself} Suppose that the assumptions of Theorem \ref{stablefragtheorem} hold. If $\beta=1-\alpha$ and $\frac{\alpha \sin(\pi \alpha)}{\pi}<b/d<\alpha(1-\alpha)$, then the process $(\Pi(t),t\geq 0)$ started from a partition with infinitely many blocks comes down from infinity and returns instantaneously to a proper partition with infinitely many blocks almost-surely.
\end{proposition}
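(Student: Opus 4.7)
The plan splits into showing (a) the coming down from infinity, and (b) that the partition-valued process then returns instantaneously to the set of proper partitions with infinitely many blocks.

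Part (a) is a direct application of the results from Section \ref{CDIEFC}. Since $\beta = 1-\alpha$, Proposition \ref{regularcdi} gives $\theta = \frac{b}{d\alpha(1-\alpha)}$, and the hypothesis $b/d < \alpha(1-\alpha)$ yields $\theta < 1$. Schweinsberg's condition \eqref{Schweinsberg} holds because $\Phi(n)\sim dn^{1+\beta}$ with $\beta > 0$, so Theorem \ref{cditheorem} ensures that the simple EFC process comes down from infinity almost-surely, and Proposition \ref{instantaneouscdi} upgrades this to $\tau_\infty^- = 0$ a.s.

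For part (b), since $\Pi(0)$ is proper and the fragmentation measure charges only partitions with blocks of infinite size, the partition $\Pi(t)$ stays proper for every $t \geq 0$. Consequently it suffices to prove
\[
\sigma := \inf\{t > 0 : \#\Pi(t) = \infty\} = 0 \quad \text{almost-surely.}
\]
Because $(\Pi(t), t \geq 0)$ is Feller on $\mathcal{P}_\infty$, Blumenthal's 0-1 law gives $\mathbb{P}(\sigma = 0) \in \{0,1\}$, so it is enough to show $\mathbb{P}(\sigma \leq \varepsilon) > 0$ for every $\varepsilon > 0$. The idea is to combine three ingredients: (i) the instantaneous coming down together with the classical fact that $\#\Pi(\delta) \to \infty$ in probability as $\delta \to 0^+$ for coalescents with $\Phi(n)\sim d n^{1+\beta}$; (ii) the strong Markov property of $(\Pi(t),t\geq 0)$ at time $\delta$, which gives that conditionally on $\#\Pi(\delta) = n$ and $\Pi(\delta)$ being proper, the block-counting process after $\delta$ has law $\mathbb{P}_n$; and (iii) a quantitative explosion estimate
\[
\mathbb{P}_n(\tau_\infty^+ \leq t) \longrightarrow 1 \quad \text{as } n \to \infty, \text{ for every fixed } t > 0.
\]
Assuming these three, for any $\varepsilon > 0$ I pick $\delta \in (0,\varepsilon/2)$ and then $N$ large enough that $\mathbb{P}(\#\Pi(\delta) \geq N) \geq 3/4$ and $\mathbb{P}_N(\tau_\infty^+ \leq \varepsilon/2) \geq 3/4$; conditioning and applying (ii) yields $\mathbb{P}(\sigma \leq \varepsilon) \geq 1/2 > 0$, and Blumenthal then closes the argument.

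The main obstacle is ingredient (iii). The regularity of $\infty$ already proved in Theorem \ref{stablefragtheorem} only delivers $\mathbb{P}_n(\tau_\infty^+ < \infty) > 0$, not the uniform $n \to \infty$ statement above. To obtain the latter I would re-examine the sufficient condition for explosion of Section \ref{suffcondexplctmc} (Theorem \ref{CTMCtheorem} and Corollary \ref{genexplosionmaincor}), tracking how the first-passage-time bounds depend on the starting level $n$. In the critical regime $\alpha+\beta=1$ with $b/d > \alpha\sin(\pi\alpha)/\pi$, the Lyapunov-type inequality used in the proof of Theorem \ref{stablefragtheorem} should produce first-passage bounds above level $2n$ of order $n^{-\alpha}$; iterating these along a geometric sequence of levels and summing then forces $\mathbb{E}_n[\tau_\infty^+] \to 0$ as $n \to \infty$, which implies (iii) by Markov's inequality. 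An additional subtlety is that coagulation can remove a macroscopic fraction of the blocks in a single jump, so the usual one-step drift argument must be replaced by the more global estimates highlighted in the introduction and developed in Section \ref{suffcondexplctmc}.
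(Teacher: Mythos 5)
Your part (a) matches the paper. For part (b) you take a genuinely different route. The paper works directly at time $0$ via the monotone coupling of Lemma \ref{monotonicity}: since $\#\Pi^{(n_0)}(t)\leq \#\Pi(t)$ a.s.\ and $(\#\Pi^{(n_0)}(t), t<\tau_\infty^{+})$ has law $\mathbb{P}_{n_0}$, the explosion time of the full process is dominated by $\tau_\infty^{+,(n_0)}$ for every $n_0$, and the quantitative bound $\mathbb{P}_{n_0}\bigl(\tau_\infty^{+,(n_0)}\leq \varphi(n_0)\bigr)\to 1$ with $\varphi(n_0)\to 0$ (from Lemma \ref{keylemma} applied with the constants $\gamma_{a,p}<1$ of Lemma \ref{explosioninstable}, plus the $\sigma_p$ argument) yields $\tau_\infty^{+}=0$ a.s.\ in one stroke. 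You instead restart the process at a small deterministic time $\delta$, use right-continuity of $\#\Pi$ at $0$ to guarantee many blocks at time $\delta$, and then apply the Markov property together with your ingredient (iii). Your ingredient (iii) is exactly what the paper's machinery delivers --- it is the content of the display \eqref{stoppedgoesto1} and of Lemma \ref{tregular}, transferred from the $p$-stopped chain to $\#\Pi$ via the event $\{\varphi(n_0)<\sigma_p\}$ --- so your plan for re-deriving it from Section \ref{suffcondexplctmc} is sound, though you should note that the estimates there concern the truncated generator $\mathcal{L}^{p}$ and the passage to the true process requires the $\inf_n \sigma_p^{(n)}>0$ argument, not just the Lyapunov computation. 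What the paper's coupling buys is the avoidance of any restart after time $0$, and in particular of any appeal to the (delicate) Markov structure near partitions with infinitely many blocks.

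Two soft spots in your write-up deserve attention. First, your use of Blumenthal's $0$--$1$ law is applied to $\mathbb{P}(\sigma\leq\varepsilon)$ averaged over the random exchangeable initial partition, whereas the $0$--$1$ law holds conditionally on a fixed initial state; a bound of $1/2$ on the average only forces $\mathbb{P}_\pi(\sigma=0)=1$ for a set of initial partitions of probability at least $1/2$. The cleanest fix is to dispense with Blumenthal altogether: letting $\delta\to 0$ and $N\to\infty$ jointly, your own estimates give $\mathbb{P}(\sigma\leq\varepsilon)\to 1$ as $\varepsilon\to 0$, hence $\sigma=0$ a.s.\ directly. Second, you assume $\Pi(0)$ proper without comment; the statement allows an arbitrary exchangeable initial partition with infinitely many blocks, and the paper treats the improper case separately by first letting a short time elapse and using $\#\Pi(t-)\to\infty$. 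Both points are repairable, so I would classify your proposal as a correct alternative strategy whose key quantitative step is sketched rather than proved, but is genuinely available from the paper's Section \ref{suffcondexplctmc}.
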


%\begin{figure}[h!]
%\centering \noindent
%\includegraphics[height=.22 \textheight]{transition3.png}
%\caption{Boundary classification when $\Phi(n)\underset{n\rightarrow \infty}{\sim} dn^{2-\alpha}$ and $\mu(n)\underset{n\rightarrow \infty}{\sim}\frac{b}{n^{1+\alpha}}$.}
%\label{graph}
%\end{figure}
The critical cases in the last statement of Theorem \ref{stablefragtheorem} for which the ratio $\frac{b}{d}$ equals $\frac{\alpha\sin(\pi \alpha)}{\pi}$ or $\alpha(1-\alpha)$ seem to be requiring finer arguments. We find in the next proposition a class of coalescence and splitting measures for which the critical value $\frac{b}{d}=\frac{\alpha\sin(\pi \alpha)}{\pi}$ can be handled.
\begin{proposition}\label{criticalvalue} Let $b,d>0$, $\alpha\in (0,1)$ and $h$ be a measurable function on $[0,1]$ such that $h\geq 1$. Set $\beta=1-\alpha$. Assume that $\Lambda(\ddr x)= d\frac{\beta(\beta+1)}{\Gamma(1-\beta)}x^{-\beta}h(x)\ddr x$ and that $\mu(n)=\frac{b}{n^{1+\alpha}}$ for all integer $n\geq 1$. Then, when $\frac{b}{d}=\frac{\alpha\sin(\pi \alpha)}{\pi}$, the boundary $\infty$ is an entrance.
\end{proposition}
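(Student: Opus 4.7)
My plan is to establish the two ingredients of an entrance boundary at $\infty$: coming down from infinity (CDI) and non-explosion.

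For CDI, the hypothesis $h \geq 1$ implies $\Lambda \geq \Lambda_0$, where $\Lambda_0(\ddr x) := d\frac{\beta(\beta+1)}{\Gamma(1-\beta)}x^{-\beta}\ddr x$ corresponds to $h \equiv 1$. Consequently $\Phi \geq \Phi_0$ with $\Phi_0(n) \sim dn^{1+\beta}$ by the computation recalled in Section \ref{CDIcoal}, so Schweinsberg's condition $\sum 1/\Phi(n) < \infty$ holds. By monotonicity and Proposition \ref{regularcdi} applied to $\Lambda_0$, the parameter $\theta^{\star}$ of Theorem \ref{cditheorem} is dominated by $(b/d)/(\alpha(1-\alpha)) = \sin(\pi\alpha)/(\pi(1-\alpha))$. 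The elementary inequality $\sin(\pi\alpha) < \pi(1-\alpha)$ holds throughout $(0,1)$ (the difference has derivative $\pi(1+\cos(\pi\alpha)) \geq 0$, vanishing only at $\alpha=1$), so $\theta^{\star} < 1$ and Theorem \ref{cditheorem} delivers CDI.

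For non-explosion I would apply a Foster--Lyapunov argument with the test function $V(n) = \log n$; since $V(n) \to \infty$, it suffices to prove $\mathcal{L}V(n) \leq CV(n) + C'$ for constants $C, C' \geq 0$. Using the explicit form $\mu(k) = b/k^{1+\alpha}$ and a Mellin--Barnes expansion of $\sum_{k \geq 1} k^{-1-\alpha}\log(1+k/n)$, based on the Mellin transform $\int_0^\infty \log(1+u)u^{s-1}\ddr u = \pi/(s\sin\pi s)$ valid for $s \in (-1,0)$, one picks up a leading residue at $s=-\alpha$ giving $\pi n^{-\alpha}/(\alpha\sin(\pi\alpha))$ and a next-to-leading residue at $s=-1$ of order $n^{-1}$. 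Multiplying by $nb$ gives
\[
\mathcal{L}^{f}V(n) = \frac{b\pi}{\alpha\sin(\pi\alpha)}\,n^{1-\alpha} + O(1).
\]
On the coalescence side, $h \geq 1$ yields the pointwise bound $\mathcal{L}^{c}V(n) \leq \mathcal{L}_{0}^{c}V(n)$, where $\mathcal{L}_{0}^{c}$ is the coalescence generator associated with $\Lambda_0$. After rescaling $x = y/n$ in the integral defining $\mathcal{L}_{0}^{c}V(n)$ and using the Binomial-to-Poisson convergence together with the identity $\int_{0}^{\infty}(e^{-y}+y-1)y^{-2-\beta}\ddr y = \Gamma(1-\beta)/(\beta(1+\beta))$, one obtains
\[
\mathcal{L}_{0}^{c}V(n) = -dn^{1-\alpha} + O(1).
\]
At the critical ratio $b/d = \alpha\sin(\pi\alpha)/\pi$ the two $n^{1-\alpha}$ contributions cancel exactly, so $\mathcal{L}V(n) = O(1) \leq CV(n)$ for large $n$, and Foster--Lyapunov delivers non-explosion.

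The main technical obstacle, which the assumption $h \geq 1$ is precisely tailored to exploit, is the rigorous $O(1)$ control of the error in $\mathcal{L}_{0}^{c}V$. The defining integral must be split into a Poisson regime $x = O(1/n)$, where a quantitative Binomial-to-Poisson coupling combined with the Taylor expansion $\log(n/(n-j+1)) = (j-1)/n + O(j^{2}/n^{2})$ captures both the leading term and an $O(1)$ correction, and a complementary macroscopic regime where $(1-x)^{n}$ decays exponentially and contributes only a negligible remainder against the weight $x^{-2-\beta}$. Provided these estimates go through, the exact cancellation at the critical ratio secures non-explosion, and combined with CDI this identifies $\infty$ as an entrance boundary.
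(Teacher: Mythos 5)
Your overall architecture coincides with the paper's: coming down from infinity is obtained from Theorem \ref{cditheorem} by checking $\theta<1$ (your monotonicity argument in $h\geq 1$ and the elementary inequality $\sin(\pi\alpha)<\pi(1-\alpha)$ is exactly the content of Lemma \ref{ineq}), and non-explosion is obtained from the Foster--Lyapunov criterion of Section \ref{lyapunovcond} with the logarithmic test function, exploiting the exact cancellation of the two $n^{1-\alpha}$ terms at the critical ratio. The one place where you diverge is the coalescence estimate, and it is precisely the step you leave open (``provided these estimates go through''). You propose to prove the two-sided asymptotic $\mathcal{L}^{c}_{0}\log n=-dn^{1-\alpha}+O(1)$ by a quantitative binomial-to-Poisson coupling; this is true but considerably harder than necessary, since Foster--Lyapunov only needs an upper bound. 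The paper closes this step in two elementary moves: first, $\log(1-x)\leq -x$ gives $\mathcal{L}^{c}\log n\leq -\Phi(n)/n$ directly (no Poisson approximation, no error control on the logarithm); second, the hypothesis $h\geq 1$ yields $\Phi(n)\geq dn^{1+\beta}-Cn$ by comparing $\Phi$ with $\Psi$ (using $\Psi(n)-\Phi(n)\leq C'n$ from \cite{limic2015}) and then comparing $\Psi$ with the full stable integral $\int_{0}^{\infty}(e^{-nx}-1+nx)x^{-2-\beta}\ddr x$, whose exact value produces the constant $d$. Similarly, on the fragmentation side your Mellin--Barnes expansion is overkill: a one-line comparison of the sum with $\int_{1}^{\infty}x^{-1-\alpha}\log(1+x/n)\ddr x$ gives the needed upper bound $b+bI(\alpha)n^{1-\alpha}$. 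So your proposal is correct in strategy and can be completed, but as written it trades the paper's short elementary inequalities for an asymptotic analysis whose $O(1)$ error terms you have not actually controlled; I would recommend replacing that part by the $\log(1-x)\leq-x$ reduction to $\Phi(n)/n$.
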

We study now the slower regime of coalescences for which for some $\beta>0$ \begin{equation}\label{slowvarphi}\Phi(n)\underset{n\rightarrow \infty}{\sim} d n(\log n)^{\beta}. 
\end{equation}
We refer the reader to \cite[Section 2.2]{cdiEFC} for conditions on the coalescence measure $\Lambda$ entailing that the function $\Phi$ has these asymptotics. We shall see that when \eqref{slowvarphi} holds, there is no regular regime when $n\ell(n)$ is of the same order as $\Phi(n)$.
\begin{theorem}\label{logcoal} Let $\beta>0$, $d>0$ and $b>0$ and $\alpha>0$. Assume that $\Phi(n)\underset{n\rightarrow \infty}{\sim} d n(\log n)^{\beta}$ and $\mu(n)\underset{n\rightarrow \infty}{\sim} b \frac{(\log n)^{\alpha}}{n^2}$. Then $n\ell(n)\underset{n\rightarrow \infty}{\sim} \frac{b}{1+\alpha}n(\log n)^{\alpha+1}$ and
\begin{itemize}
\item if $\beta<1+\alpha$, then 
%$(\#\Pi(t),t\geq 0)$ explodes and stays infinite almost-surely:
%\vspace*{3mm}
%\begin{center}
$\infty$ is an exit boundary,
\vspace*{1mm}
%\end{center}
%\vspace*{3mm}
\item if $\beta>1+\alpha$, then 
%$(\#\Pi(t),t\geq 0)$ does not explode  and when started from a partition with infinitely many blocks of infinite size, comes down from infinity instantaneously almost-surely:
%\vspace*{3mm}
%\begin{center}
$\infty$ is an entrance boundary,
\vspace*{1mm}
%\end{center}
%\vspace*{3mm}
\item if $\beta=1+\alpha$, then
\vspace*{1mm}
\begin{itemize}
\item if $b/d>1+\alpha$, 
%$(\#\Pi(t),t\geq 0)$ explodes  and stays infinite almost-surely:
%\vspace*{3mm}
%\begin{center}
then $\infty$ is an exit boundary,
%\end{center}
\vspace*{1mm}
\item if $b/d<1+\alpha$, $(\#\Pi(t),t\geq 0)$ 
%does not explode  and when started from a partition with infinitely many blocks of infinite size, comes down from infinity instantaneously  almost-surely:
%\vspace*{3mm}
%\begin{center}
then $\infty$ is an entrance boundary.
%\end{center}
\end{itemize}
\end{itemize}
\end{theorem}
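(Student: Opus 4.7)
The plan is to first establish the asymptotic behaviour of $n\ell(n)$ from the assumption $\mu(n)\sim b(\log n)^{\alpha}/n^{2}$, and then treat the three cases $\beta<1+\alpha$, $\beta>1+\alpha$, $\beta=1+\alpha$ separately. A direct Abel-type summation yields $\bar\mu(n)\sim b(\log n)^{\alpha}/n$, and then $\ell(n)=\sum_{k=1}^{n}\bar\mu(k)\sim \frac{b}{1+\alpha}(\log n)^{1+\alpha}$, so $n\ell(n)\sim \frac{b}{1+\alpha}n(\log n)^{1+\alpha}$ as claimed.

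For the subcritical exit regime $\beta<1+\alpha$, I would apply Theorem \ref{suffcondpropexit} directly: the ratio $\Phi(n)/(n\ell(n))\sim \frac{d(1+\alpha)}{b}(\log n)^{\beta-(1+\alpha)}$ tends to $0$, and condition $\mathbb{H}$ holds since $\ell(n)\geq c(\log n)^{1+\alpha}$ for large $n$, which we rewrite as $\ell(n)\geq g(\log n)\log n$ with $g(x)=cx^{\alpha}$ eventually non-decreasing (as $\alpha>0$) and $\int^{\infty}\ddr x/(xg(x))<\infty$. For the supercritical entrance regime $\beta>1+\alpha$, I would apply Theorem \ref{suffcondpropentrance}: the general term $n\bar\mu(n)/\Phi(n)\sim (b/d)/(n(\log n)^{\beta-\alpha})$ defines a convergent series since $\beta-\alpha>1$, and the companion condition $\sum 1/\Phi(n)<\infty$ holds because $\beta>1+\alpha>1$.

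The delicate case is the critical one $\beta=1+\alpha$, in which $\Phi(n)/(n\ell(n))\to d(1+\alpha)/b$ is a strictly positive finite limit, so that neither Theorem \ref{suffcondpropexit} nor Theorem \ref{suffcondpropentrance} applies (on the non-explosion side, $\sum n\bar\mu(n)/\Phi(n)\asymp\sum 1/(n\log n)=\infty$). When $b/d>1+\alpha$ the limiting ratio is strictly less than $1$, and I would invoke the sharper explosion criterion of Section \ref{suffcondexplctmc} (Theorem \ref{CTMCtheorem}, or its Corollary \ref{genexplosionmaincor}), which detects explosion as soon as the fragmentation outweighs the coagulation quantitatively, without requiring the ratio to vanish. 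When $b/d<1+\alpha$, I would prove non-explosion by a Foster--Lyapunov argument based on $V(n)=(\log n)^{\gamma}$ with small $\gamma>0$: a Taylor expansion yields $\mathcal{L}^{f}V(n)\sim \gamma(\log n)^{\gamma-1}\ell(n)\sim \gamma\frac{b}{1+\alpha}(\log n)^{\gamma+\alpha}$ (the main contribution comes from $k\lesssim n$, after splitting the sum $n\sum_{k}\mu(k)\log(1+k/n)$), while $\mathcal{L}^{c}V(n)\sim -\gamma(\log n)^{\gamma-1}\Phi(n)/n\sim -\gamma d(\log n)^{\gamma+\alpha}$, so that $\mathcal{L}V(n)\sim \gamma(\log n)^{\gamma+\alpha}[b/(1+\alpha)-d]<0$ for $n$ large precisely when $b/d<1+\alpha$. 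The entrance is then obtained from Theorem \ref{cditheorem}: splitting $\sum_{k\geq 1}n\bar\mu(k)/\Phi(n+k)$ at $k\sim n$ and using the slow variation of $(\log\cdot)^{1+\alpha}$, the contribution of $k\leq n$ is asymptotically $b/(d(1+\alpha))$ while that of $k>n$ is $o(1)$, so $\theta=b/(d(1+\alpha))<1$ and the process comes down from infinity.

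The main obstacle is the critical case $\beta=1+\alpha$. In the non-critical cases the argument is a routine verification of the hypotheses of Theorems \ref{suffcondpropexit} and \ref{suffcondpropentrance}, while in the critical case the sharp threshold $1+\alpha$ emerges only after a precise bookkeeping of the prefactors: on the explosion side it requires the refined general criterion of Section \ref{suffcondexplctmc} with explicit constants; on the non-explosion side it requires the Lyapunov function $V(n)=(\log n)^{\gamma}$ together with the careful evaluation $n\sum_{k}\mu(k)\log(1+k/n)\sim \ell(n)$, which in turn demands a splitting of the sum at $k\sim n$ because no single scale of $k$ dominates. The same type of scale-sensitive splitting also controls the evaluation of $\theta$ in Theorem \ref{cditheorem}.
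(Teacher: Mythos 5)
Your treatment of the asymptotics of $\ell$ and of the two non-critical regimes is correct and coincides with the paper's, which simply invokes Theorems \ref{suffcondpropexit} and \ref{suffcondpropentrance} through Examples \ref{exampleexit}-(2) and \ref{exampleentrance}-(2). In the critical case $\beta=1+\alpha$, your non-explosion argument takes a genuinely different route: the paper applies Proposition \ref{suffcondpropentrance1}, i.e.\ the Lyapunov function $n^{1-a}$ with $a<1$, checking that $\frac{n^{a+1}}{\Phi(n)}\sum_{k>n}k^{1-a}\mu(k)\sim \frac{b}{d\log n}\to 0$ and that $\limsup_n \frac{n}{\Phi(n)}\sum_{k\le n}k\mu(k)=\theta<1$, whereas you use $V(n)=(\log n)^{\gamma}$ with small $\gamma>0$. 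Your computation does go through: by concavity $(1+u)^{\gamma}-1\le \gamma u$ for $u>-1$, so $\mathcal{L}^{f}V(n)\le \gamma(\log n)^{\gamma-1}\, n\sum_{k}\mu(k)\log(1+k/n)\sim \gamma(\log n)^{\gamma-1}\ell(n)$ and $\mathcal{L}^{c}V(n)\le -\gamma(\log n)^{\gamma-1}\Phi(n)/n$, giving a negative drift exactly when $b/d<1+\alpha$. This is in the same spirit as the paper's proof of Proposition \ref{criticalvalue}, which uses $\log(n+1)$; either Lyapunov function works, and yours has the mild advantage of treating both critical thresholds with one family of test functions. Your direct evaluation of $\theta=\frac{b}{d(1+\alpha)}$ by splitting the sum at $k\sim n$ is also sound (the paper instead cites \cite[Proposition 1.8]{cdiEFC} for the coming down from infinity in this regime).

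The one genuine gap is on the explosion side of the critical case. You propose to ``invoke Corollary \ref{genexplosionmaincor} with explicit constants,'' but the corollary itself requires no refinement; what must be refined is the lower bound on $G_a^{f}$. The estimate actually available in the paper, Lemma \ref{estimatesexplosion}-(ii), gives $G_a^{f}(n)\ge 2^{-a}(a-1)\ell(n)$, which combined with $-G_a^{c,p}(n)\le (a-1)(1-p)^{-a}\Phi(n)/n$ yields $\limsup_n\frac{-G_a^{c,p}(n)}{G_a^{f}(n)}\le 2^{a}(1-p)^{-a}\frac{d(1+\alpha)}{b}$; this is smaller than $1$ only when $b/d>2(1+\alpha)$, so a direct application misses the whole range $1+\alpha<b/d\le 2(1+\alpha)$. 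To reach the sharp threshold one must use that in this logarithmic regime the mass of $k\mapsto k\mu(k)$ below level $n$ concentrates on $k\le \epsilon n$ (for every fixed $\epsilon>0$ one still has $\sum_{k\le \epsilon n}k\mu(k)\sim \frac{b}{1+\alpha}(\log n)^{1+\alpha}$), so that the inequality $1-(1+x)^{1-a}\ge (a-1)(1+\epsilon)^{-a}x$ valid on $[0,\epsilon]$ gives $G_a^{f}(n)\ge (a-1)(1+\epsilon)^{-a}\frac{b}{1+\alpha}(\log \epsilon n)^{1+\alpha}$, replacing the lossy factor $2^{-a}$ by $(1+\epsilon)^{-a}$; letting $a\to 1^{+}$ and $\epsilon,p\to 0$ then produces the limiting ratio $\frac{d(1+\alpha)}{b}<1$. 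This truncation of the fragmentation sum at $\epsilon n$ is precisely the content of the paper's Lemma \ref{thirdpartTheoremslow}, and it is the one step your proposal gestures at but does not supply.
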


\begin{remark} 
The first two statements of Theorem \ref{logcoal} are  consequences of  Theorem \ref{suffcondpropexit} and Theorem \ref{suffcondpropentrance}, respectively. The proof 
of the third statement is deferred to Lemma \ref{thirdpartTheoremslow}. 
The critical case $b/d=1+\alpha$ in the last statement of Theorem \ref{logcoal} remain unsolved. 
%We refer the reader to \cite{Schweinsberg00} and \cite[Section 2.2]{cdiEFC} {\red{for}} conditions on the coalescence measure $\Lambda$ entailing that the function $\Phi$  satisfies  $\Phi(n)\underset{n\rightarrow \infty}{\sim} d n(\log n)^{\beta}$.
\end{remark}

\section{Explosion of a general CTMC on $\mathbb{N}$}\label{suffcondexplctmc}
We state in this section sufficient conditions for explosion and non-explosion of a general continuous-time Markov chain taking values in $\mathbb{N}$. Explosion in this setting corresponds to accumulations of large jumps in compact intervals of time that are pushing the process to $\infty$.  We believe the results of Section \ref{sec:explosion} of independent interest.

Consider an infinitesimal generator $\mathscr{L}=\mathscr{L}^{-}+\mathscr{L}^{+}$ acting on all bounded function $g:\mathbb{N}\rightarrow \mathbb{R}_+$ and all $n\in \mathbb{N}$ as follows
\begin{align}\label{generalgenerator}
\mathscr{L}^{-}g(n)&=\sum_{k=1}^{n-1}\big(g(n-k)-g(n)\big)p^{-}_{n,k} \text{ and }
\mathscr{L}^{+}g(n)=\sum_{k=1}^{\infty}\big(g(n+k)-g(n)\big)p^{+}_{n,k}
\end{align}
where $p^{+}_{n,k}\in [0,\infty)$ and $p^{-}_{n,k}\in [0,\infty)$ are respectively the rates of positive and negative jumps.

Standard theory of Markov processes, see e.g \cite{Anderson}, ensures that there exists a unique continuous-time Markov chain $(N_t,t\geq 0)$, taking values in $\mathbb{N}\cup \{\infty\}$, with generator $\mathscr{L}$, absorbed at $\infty$ (viewed as a cemetery point) after explosion. Denote the first explosion time by $\tau^{+}_\infty:=\inf\{t>0; N_{t-}=\infty\}$.

\subsection{Explosion}\label{sec:explosion}
The following theorem provides a sufficient condition for explosion to occur with positive probability.  Theorem \ref{CTMCtheorem} has several precursors in the literature for positive real-valued Markov processes without negative jumps, see \cite{Lietal} and the references therein.

For any $a>0$ and for any $n \in \mathbb{N}^{\star}$, set $g_a(n):=n^{1-a}$ and $G_a(n):=-\frac{1}{n^{1-a}}\mathscr{L}g_a(n)$.
\begin{theorem}\label{CTMCtheorem} If there  exist $a>1$ and an eventually non-decreasing positive function $g$ satisfying $\int^{\infty}\frac{\ddr x}{xg(x)}<\infty$ such that  for all large enough $n$
\begin{equation}\label{generalcondition} G_a(n)\geq g(\log n)\log n,
\end{equation}
 then $\mathbb{P}_n(\tau_\infty^{+}<\infty)>0$ for all large enough $n\in \mathbb{N}$. 
% {\color{green} Remove $c$ since it can be treated as part of $g$?}

If moreover, the process is irreducible in $\mathbb{N}$, then   $\mathbb{P}_n(\tau_\infty^{+}<\infty)>0$ for all $n\in \mathbb{N}$.
\end{theorem}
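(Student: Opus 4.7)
The plan is to combine a Foster--Lyapunov estimate, which yields a finite expected exit time from the set of large states, with the supermartingale property of $g_a$, which then yields a positive probability of leaving ``through explosion'' rather than downward. Setting $\phi(n):=g(\log n)\log n$, the hypothesis rewrites as $\mathscr{L}g_a(n)\leq -g_a(n)\phi(n)$ for $n$ large, so in particular $g_a(N_{\cdot\wedge\tau_\infty^+})$ is a bounded supermartingale. The heuristic ODE $\dot N_t\asymp N_t\phi(N_t)$ reaches infinity in time $\int^\infty dn/(n\phi(n))$; the substitution $v=\log n$ identifies this integral with $\int^\infty dv/(vg(v))$, finite by hypothesis.

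The core technical step is the construction of a bounded auxiliary Lyapunov function $V$ satisfying $\mathscr{L}V(n)\geq c>0$ for all large $n$. Guided by the ODE heuristic, I would set $V(n):=F(g_a(n))$ with
\[
F(x):=\int_x^{x_0}\!\frac{du}{u\,g\!\bigl(-\tfrac{\log u}{a-1}\bigr)\cdot\bigl(-\tfrac{\log u}{a-1}\bigr)},\qquad x\in(0,x_0],
\]
so that by construction the identity $|F'(g_a(n))|\,g_a(n)\,\phi(n)\equiv 1$ holds. The change of variable $v=-\log u/(a-1)$ rewrites $F(x)=(a-1)\int_{-\log x_0/(a-1)}^{-\log x/(a-1)}dv/(vg(v))$, which stays bounded as $x\to 0^+$ by the integrability hypothesis. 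Thus $V$ is bounded, increasing in $n$, with $V(n)\uparrow\sup V=F(0^+)$ as $n\to\infty$. For $x$ small enough, a direct computation shows $F$ is convex (using that $g$ is eventually non-decreasing), and the local convexity inequality $F(g_a(n\pm k))-F(g_a(n))\geq F'(g_a(n))(g_a(n\pm k)-g_a(n))$ formally yields $\mathscr{L}V(n)\geq F'(g_a(n))\mathscr{L}g_a(n)\geq |F'(g_a(n))|\,g_a(n)\,\phi(n)=1$.

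The main obstacle is justifying this bound in the presence of large negative jumps, since for $k$ comparable to $n$ the point $g_a(n-k)$ exits the convexity region of $F$ and $|V(n)-V(n-k)|$ can be as large as $\sup V$. The resolution uses the hypothesis $G_a\geq\phi$ in its full strength. Writing $G_a(n)=A_n-B_n$ with $A_n:=\sum_k (1-(n/(n+k))^{a-1})p^+_{n,k}$ and $B_n:=\sum_k ((n/(n-k))^{a-1}-1)p^-_{n,k}$, I would decompose $\mathscr{L}^-V$ into small-jump ($k\leq \varepsilon n$) and large-jump ($k>\varepsilon n$) parts, apply the convexity bound to the former to get a $-B_n^{\text{small}}/\phi(n)$ contribution, and estimate the latter through the integral representation $V(n)-V(n-k)=\int_{g_a(n)}^{g_a(n-k)}|F'(u)|\,du$ combined with the monotonicity of $g$ to produce a bound of the same telescoping shape as $B_n^{\text{large}}$. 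The assumption $A_n-B_n\geq\phi(n)$ then precisely compensates the resulting losses, and one recovers $\mathscr{L}V(n)\geq \tfrac12$ on $\{n\geq n_\star\}$ for some large $n_\star$. This is the technical heart of the proof and is where the new ideas promised in the introduction for handling large coalescent jumps enter.

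Given this Lyapunov inequality, set $\sigma:=\inf\{t\geq 0:N_t<n_\star\}$ and extend $V$ by $0$ below $n_\star$. Dynkin's formula applied to $V(N_{t\wedge\sigma\wedge\tau_\infty^+})$, together with the boundedness of $V$, yields $\mathbb{E}_n[\sigma\wedge\tau_\infty^+]\leq 2(\sup V-V(n))<\infty$, so $\sigma\wedge\tau_\infty^+<\infty$ almost surely. Optional stopping applied to the supermartingale $g_a(N_\cdot)$ at $\sigma\wedge\tau_\infty^+$, using $g_a(N_{\tau_\infty^+})=0$ and $N_\sigma\leq n_\star-1$ on $\{\sigma<\tau_\infty^+\}$, gives $g_a(n)\geq g_a(n_\star-1)\,\mathbb{P}_n(\sigma<\tau_\infty^+)$, whence
\[
\mathbb{P}_n(\tau_\infty^+<\infty)\;\geq\;1-\bigl((n_\star-1)/n\bigr)^{a-1}\;>\;0
\]
for all $n$ sufficiently large. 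Under irreducibility in $\mathbb{N}$, every state reaches $\{n\geq n_\star\}$ in finite time with positive probability, and the strong Markov property at this hitting time extends the positivity of $\mathbb{P}_n(\tau_\infty^+<\infty)$ to all $n\in\mathbb{N}$.
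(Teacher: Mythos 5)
You are attempting a genuinely different route from the paper: a bounded Lyapunov function $V$ with $\mathscr{L}V\geq c>0$ on the large states (forcing a finite mean exit time from $\{n\geq n_\star\}$), followed by optional stopping of the supermartingale $g_a(N)$ to show the exit occurs through $\infty$ with probability at least $1-\bigl((n_\star-1)/n\bigr)^{a-1}$. That endgame is sound. The paper instead works with the exponential martingale $N_{t\wedge T}^{1-a}\exp\bigl(\int_0^{t\wedge T}G_a(N_s)\,\ddr s\bigr)$, extracts two-sided first-passage estimates from it, and runs a recursive scheme through the levels $n_0^{(1+\delta)^k}$, summing the waiting times against $\int^{\infty}\frac{\ddr v}{vg(v)}$. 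The decisive difference is that the paper never needs a pointwise drift inequality for a bounded function: the damage done by a macroscopic downward jump is absorbed into the probability bound $\mathbb{P}_{n_0}(\tau_n^-<\tau_m^+)\leq(n/n_0)^{a-1}$ rather than into a generator estimate.

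The gap in your proposal sits exactly at its self-declared technical heart: the claim that $G_a(n)\geq\phi(n):=g(\log n)\log n$ ``precisely compensates'' the large-jump losses in $\mathscr{L}V(n)$ is not proved, and it is false under the stated hypotheses. Note that $V(n)-V(n-k)=(a-1)\int_{n-k}^{n}\frac{\ddr m}{m\,\phi(m)}$, which for $k$ comparable to $n$ is of order $1/\phi(n-k)$, or even of order $1$ when $n-k$ stays bounded, whereas the credit the hypothesis assigns to such a jump is $\bigl((1-k/n)^{1-a}-1\bigr)$ per unit rate, to be weighed against a gain of only $1/\phi(n)$ per unit of $G_a^{+}(n)$; these scales do not match. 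Concretely, take $p^{-}_{n,n-1}=M_n$ and $p^{+}_{n,n}=K_nM_n$ with $K_n=\bigl(n^{a-1}-1+\phi(n)/M_n\bigr)/(1-2^{1-a})$, so that $G_a(n)=\phi(n)$ and the hypothesis holds exactly. The tangent-line bound on the upward jump contributes $1+M_n(n^{a-1}-1)/\phi(n)$ to $\mathscr{L}V(n)$, while the downward jump to $1$ costs $M_n\bigl(V(n)-V(1)\bigr)$, which is of order $M_n$. If $g$ grows fast enough that $\phi(n)\gg n^{a-1}$ --- which the hypotheses permit, and which also destroys the claimed convexity of $F$ near $0$, since that convexity is equivalent to $m\mapsto m^{a-1}/\phi(m)$ being non-decreasing and does not follow from $g$ being non-decreasing --- then $\mathscr{L}V(n)\to-\infty$ as $M_n\to\infty$, even though the theorem's conclusion still holds for this chain. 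So your $V$ does not satisfy the inequality your argument needs; rescuing this route would require a further reduction (truncating $g$, or cutting off the large downward jumps as the paper does in the EFC application via the stopping time $\sigma_p$) that you have not supplied.
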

%\begin{proof}
%For any function $f:\mathbb{N}\rightarrow \mathbb{R}^{\star}_+$, the process
%\[t\mapsto f(N_{t})e^{-\int_{0}^{t}\mathcal{L}f(N_s)/f(N_s) \ddr s}\]
%is a local martingale \textbf{(really??)}. Assuming that there is a constant $c$ such that for all $n$, $\mathcal{L}f(n)\leq cf(n)$, then by applying the optional theorem to the stopping time $\tau_{n_1}^{+}\wedge t$, we get
%\begin{equation}\label{mart}f(n)=\mathbb{E}_n\left[f(N_{t\wedge \tau^{+}_{n_1}})e^{-\int_{0}^{t\wedge \tau_{n_1}^{+}}\mathcal{L}f(N_s)/f(N_s)\ddr s}\right]\geq \mathbb{E}_n\left[f(N_{t\wedge \tau^{+}_{n_1}})e^{-ct\wedge \tau_{n_1}^{+}}\right].
%\end{equation}
%Assume now $f$ non-decreasing and such that $f(n)\underset{n\rightarrow \infty}{\longrightarrow} \infty$. Thus,
%\[\frac{f(N_{t\wedge \tau_{n_1}^{+}})}{f(n_1)}\mathbbm{1}_{\{\tau_{n_1}^{+}\leq t \}}\geq \mathbbm{1}_{\{\tau_{n_1}^{+}\leq t\}}.\]
%From \eqref{mart}, we see that
%\[\frac{f(n)}{f(n_1)}\geq \mathbb{E}_n\left[\frac{f(N_{t\wedge \tau^{+}_{n_1}})}{f(n_1)}e^{-ct\wedge \tau_{n_1}^{+}}\mathbbm{1}_{\{\tau^{+}_{n_1}\leq t\}}\right]\geq e^{-ct}\mathbb{P}_{n}(\tau_{n_1}^{+}\leq t).\]
%Since $f(n_1)\underset{n_1\rightarrow \infty}{\longrightarrow} \infty$, then
%$\mathbb{P}_{n}(\tau_{\infty}^{+}\leq t)=0$ and there is no explosion.
%
%
We adapt the method of Li et al. in  \cite[Section 5]{Lietal} where the explosion of nonlinear branching processes is studied. We stress that in our framework, the process has both positive and negative jumps, moreover large negative jumps may occur along large coalescence events. We establish Theorem \ref{CTMCtheorem} with the help of several lemmas. 
% Let $a>1$. Recall $g_a(n)=n^{1-a}$ and $G_a(n):=-\frac{1}{n^{1-a}}\mathcal{L}g_a(n)$ for all $n$.
%\begin{theorem}\label{SCexplosion}\
%{\color{red} If there exist $a>1$ and a positive nondecreasing function $g$  such that
%\[\int^\infty \frac{1}{ug(u)}\ddr u<\infty,\quad\text{  equivalently},\quad \sum_{n=k}^\infty \frac{1}{ng(n)}<\infty\quad\text{for large enough } k\in \mathbb{N}\]
%  and
%\begin{equation}\label{lowerboundGa}
%G_a(n)\geq cg(\log n)\log n
%\end{equation}
%for some constant $c>0$ and large enough $n$,} then there is explosion with positive probability.
%\end{theorem}
%
%\begin{itemize}
%\item[(i)] In case (i). Recall that when $a>1$, $G_a^{f}(n)\geq 0$ and $G_a^{c}(n)\leq 0$. One has
%\[G_a(n)=G_a^{c}(n)+G_a^{f}(n)=G_{a}^{f}(n)\left(1-\frac{-G_a^c(n)}{G_a^f(n)}\right).\]
%

The proof of Theorem \ref{CTMCtheorem} relies on some estimates on exit probabilities from an interval $[|n,m|]$ for the process $(N_t,t\geq 0)$. The latter will be obtained using the following martingale.

\begin{lemma}[Martingale]\label{martingale} Let $n_0$ be a fixed integer and let $n$ and $m$ be two integers such that $n>n_0>m$. Set $T:=\tau^{-}_{n}\wedge \tau^{+}_{m}$.
%\begin{itemize}
%\item[(i)]
For $a>1$,  the process \[\left(N_{t\wedge T}^{1-a}\exp \left(\int_0^{t\wedge T}G_a(N_s)\ddr s\right), t\geq 0\right)\] is a bounded $(\mathcal{F}_t)$-martingale, and
%\item[(ii)]
%If $1-a>0$ and $\sum_{k=1}^\infty k^{1-a}\mu(k)<\infty $, then
%the process \[\left(N_{t\wedge T}^{1-a}\exp \left(\int_0^{t\wedge T}G_a(N_s)\ddr s\right), t\geq 0\right)\] is a uniformly integrable $(\mathcal{F}_t)$-martingale.
%\end{itemize}
%In both cases,
\begin{equation}\label{martingale_lem}
\mathbb{E}_{n_0}\left[N_{T}^{1-a}\exp \left(\int_0^{T}G_a(N_s)\ddr s\right)\right]\leq n_0^{1-a}.
\end{equation}
\end{lemma}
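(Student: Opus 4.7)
The strategy is the classical Feynman--Kac transformation: by the very definition of $G_a$, the function $g_a(x)=x^{1-a}$ satisfies the eigenfunction-type identity $\mathscr{L}g_a(n)=-G_a(n)g_a(n)$ for every $n\in\mathbb{N}$. Multiplying $g_a(N_t)$ by the exponential compensator $Z_t:=\exp\bigl(\int_0^t G_a(N_s)\,\ddr s\bigr)$ should therefore cancel the mean drift and produce a local martingale, from which the desired bound will follow by a supermartingale argument.

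The first step is Dynkin's formula. Since $a>1$ the function $g_a$ is bounded by $1$, and under the standing assumption that the total jump rates of $(N_t)$ at each state are finite, the process
\[M_t:=g_a(N_t)-g_a(N_0)-\int_0^t\mathscr{L}g_a(N_s)\,\ddr s\]
is a local $(\mathcal{F}_t)$-martingale. I would then apply the integration-by-parts formula to the product of the c\`adl\`ag process $g_a(N_t)$ with the absolutely continuous process $Z_t$. Since the covariation $[g_a(N),Z]$ vanishes and $\ddr Z_t=Z_t G_a(N_t)\,\ddr t$, inserting the eigenfunction identity $\mathscr{L}g_a(N_s)=-G_a(N_s)g_a(N_s)$ yields
\[\ddr\bigl(g_a(N_t)Z_t\bigr)=Z_t\bigl[\mathscr{L}g_a(N_{t-})+g_a(N_{t-})G_a(N_{t-})\bigr]\,\ddr t+Z_t\,\ddr M_t=Z_t\,\ddr M_t,\]
so $\bigl(g_a(N_t)Z_t\bigr)_{t\ge 0}$ is a local martingale.

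Next, I would stop at $T=\tau_n^-\wedge\tau_m^+$. Before $T$ the chain $(N_s)$ stays in the finite set $\{m+1,\dots,n-1\}$, on which $|G_a|$ is dominated by some constant $C_{m,n}$. Localizing further at a deterministic horizon $T_k$, the stopped process $g_a(N_{\cdot\wedge T\wedge T_k})Z_{\cdot\wedge T\wedge T_k}$ is genuinely bounded by $\exp(C_{m,n}T_k)$, hence is a true bounded $(\mathcal{F}_t)$-martingale, whose expectation at every time equals $g_a(n_0)=n_0^{1-a}$.

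Finally, since the stopped process $\bigl(g_a(N_{t\wedge T})Z_{t\wedge T}\bigr)_{t\ge 0}$ is a non-negative local martingale, it is a supermartingale. Letting $T_k\to\infty$ and then $t\to\infty$ and invoking Fatou's lemma produces the desired bound
\[\mathbb{E}_{n_0}\Bigl[N_T^{1-a}\exp\Bigl(\int_0^T G_a(N_s)\,\ddr s\Bigr)\Bigr]\leq n_0^{1-a},\]
which is \eqref{martingale_lem}. The main technical point will be this passage to the limit: because $Z_{t\wedge T}$ is not uniformly bounded in $t$ (the stopping time $T$ has no deterministic upper bound), the martingale identity at $t\wedge T\wedge T_k$ degrades into a supermartingale inequality via Fatou, which is exactly why \eqref{martingale_lem} is stated with $\leq$ rather than an equality.
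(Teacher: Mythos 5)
Your proposal is correct and follows essentially the same route as the paper's proof: Dynkin's formula for $g_a(N)$, the integration-by-parts/product rule with the exponential compensator (using that the covariation vanishes), boundedness of the stopped process on compact time intervals to upgrade the local martingale to a true martingale, and Fatou's lemma to pass to $T$ and obtain the inequality \eqref{martingale_lem}. Your closing remark about why only $\leq$ survives the limit matches the paper's use of Fatou exactly.
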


\begin{proof} %Let $a>1$. 
Recall $g_a(n):=n^{1-a}$ and $G_a(n):=-\frac{1}{n^{1-a}}\mathscr{L}g_a(n)$ for all $n\geq 1$. By Dynkin's formula, $N_{t\wedge T}^{1-a}- \int_0^{t\wedge T}\mathscr{L}g_a(N_s)\ddr s$ is a local martingale.
Since the quadratic variation process vanishes, i.e $\langle N_{\cdot\wedge T}^{1-a},  \int_0^{\cdot\wedge T}G_a(N_s)\ddr s\rangle=0$, by the product rule of It\^o's formula, we have that $\left(N_{t\wedge T}^{1-a}\exp \left(\int_0^{t\wedge T}G_a(N_s)\ddr s\right), t\geq 0\right)$ is a local martingale. Observe that for each $t>0$, $\int_0^{t\wedge T}G_a(N_s)\ddr s$ is  bounded.
%\begin{itemize}
%\item[(i)]
Since $a>1$, $N_{t\wedge T}^{1-a}$ is bounded from above uniformly for all $t>0$, and by \cite[Theorem I.51]{Protter}, the process $\left(N_{t\wedge T}^{1-a}\exp \left(\int_0^{t\wedge T}G_a(N_s)\ddr s\right), t\geq 0\right)$ is a  martingale.
%\item[(ii)]
%For case (ii), since
%$N_t$ is a continuous time Markov chain with upward jump distribution
%\[\mathbb{P}_{n_0}(J=k)=\frac{\mu(k)}{\sum_{k=1}^\infty \mu(k)}, \,\,\, k\geq 1,\]
%then
%\[\mathbb{P}_{n_0}(N_T=j| N_{T-}=i)=\mathbb{P}_{n_0}(J=j-i)\,\,\,\text{ for}\,\,\, m\leq i\leq n.\]
%Recall $T=\tau^{+}_n\wedge \tau_{m}^{-}$. For $k>n$
%\begin{align*}\mathbb{P}_{n_0}(N_T\geq k)=\mathbb{P}_{n_0}(N_T\geq k; T=\tau^{+}_n)&=\mathbb{P}_{n_0}(N_T\geq k, N_{T-}<n)\\
%&\leq \mathbb{P}_{n_0}(J\geq k-n)\\
%&=\mathbb{P}_{n_0}(J+n\geq k).
%\end{align*}
%Then $N_T$ is stochastically smaller than $J+n$, and
%\[
%\mathbb{E}_{n_0}(N_T^{1-a})\leq \mathbb{E}_{n_0}((J+n)^{1-a})\leq (2n)^{1-a}+\mathbb{E}_{n_0}((2J)^{1-a})<\infty
%\]
%by the assumption in (ii). Clearly,
%\[\mathbb{E}_{n_0}\left(\sup_{s\leq t}N_{s\wedge T}^{1-a}\right)\leq n^{1-a}+ \mathbb{E}_{n_0}(N_T^{1-a})<\infty\,\,\, \text{ for each} \,\, t.\]
%  By Protter (2005) again, process $\left(N_{t\wedge T}^{1-a}\exp \left(\int_0^{t\wedge T}G_a(N_s)\ddr s\right), t\geq 0\right)$ is a martingale.
%\end{itemize}
The inequality (\ref{martingale_lem})  follows from Fatou's lemma.\qed

\end{proof}

%\subsubsection{Proof of Theorem \ref{SCexplosion}-(i): explosion}
\begin{lemma}[Estimates on exit probabilities]\label{exitestimates} Under the assumption of Theorem \ref{CTMCtheorem}. Recall $a>1$. For $n$ large enough and any $m>n_0>n$, we have
	 \begin{equation}
\label{0.1} \mathbb{P}_{n_0}(\tau_n^{-}<\tau_m^{+})\leq \left(\frac{n}{n_0} \right)^{a-1}\!,
\end{equation}
\begin{equation}\label{0.2} \mathbb{P}_{n_0}(\tau_n^{-}=\tau_m^{+}=\infty)=0.
\end{equation}
For any $u>0$, we have
\begin{equation}\label{0.3} \mathbb{P}_{n_0}(\tau_n^{-}>\tau_m^{+}>u)\leq \left(\frac{n_0}{m}\right)^{1-a}e^{-ug(\log n) \log n}.
\end{equation}
For $0<\delta<1/(2a-1)$, set $n=[n_0^{1-\delta}]+1$ and $m=[n_{0}^{1+\delta}]$ and define $t(y):=\frac{1}{g(\log y^{1-\delta}) }$ for any $y\geq 1$. We have  \begin{equation}\label{0.4}
\mathbb{P}_{n_0}(\tau_m^{+}>t(n_0))\leq (1+2^{a-1}) n_0^{\delta(1-a)}.
\end{equation}
%for some constant $c_a>0$ only depending on $a$.
\end{lemma}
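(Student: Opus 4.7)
The entire lemma is driven by the bounded martingale
\begin{equation*}
M_t := N_{t\wedge T}^{1-a}\exp\left(\int_0^{t\wedge T} G_a(N_s)\,\ddr s\right), \qquad T:=\tau_n^-\wedge \tau_m^+,
\end{equation*}
supplied by Lemma \ref{martingale}, together with two standing observations: by hypothesis \eqref{generalcondition} one has $G_a(k)\geq g(\log k)\log k>0$ for every $k$ large, and $k\mapsto g(\log k)\log k$ is eventually non-decreasing. Each of the four estimates will then amount to restricting the a priori bound $\mathbb{E}_{n_0}[M_T]\leq n_0^{1-a}$ to an appropriate event and reading off a pointwise lower bound for $M_T$ or $M_u$ on that event.

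For \eqref{0.1}, the plan is to discard the exponential in $M_T$ (which is $\geq 1$ as soon as $n$ is large enough that $G_a\geq 0$ on $[n,m]$) and exploit that $N_T<n$ on $\{\tau_n^-<\tau_m^+\}$. Since $1-a<0$ this gives $N_T^{1-a}\geq n^{1-a}$, and Lemma \ref{martingale} directly rearranges to \eqref{0.1}. For \eqref{0.2} the same inequality does the job by itself: on $\{T=\infty\}$ the chain stays trapped in $[n,m]$, so $N_{t\wedge T}^{1-a}\geq m^{1-a}>0$ while $\int_0^{t\wedge T} G_a(N_s)\,\ddr s\geq t\,g(\log n)\log n\to\infty$; hence $M_t\to\infty$ on that set, and Fatou's lemma applied to Lemma \ref{martingale} forces $\mathbb{P}_{n_0}(T=\infty)=0$.

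For \eqref{0.3} I will use that $M_t$ is a genuine bounded martingale, so optional stopping at the deterministic time $u$ yields $\mathbb{E}_{n_0}[M_{u\wedge T}]=n_0^{1-a}$. On $\{\tau_n^->\tau_m^+>u\}\subset\{T>u\}$ the chain satisfies $N_s\in[n,m]$ for every $s\leq u$, hence $N_u^{1-a}\geq m^{1-a}$ and, by the monotonicity of $k\mapsto g(\log k)\log k$, $\int_0^u G_a(N_s)\,\ddr s\geq u\,g(\log n)\log n$; restricting the expectation to this event gives \eqref{0.3}. For \eqref{0.4} I will decompose
\begin{equation*}
\{\tau_m^+>t(n_0)\}\subset\{\tau_n^-<\tau_m^+\}\cup\{\tau_n^->\tau_m^+>t(n_0)\}
\end{equation*}
and apply \eqref{0.1} and \eqref{0.3} with $n=[n_0^{1-\delta}]+1\leq 2n_0^{1-\delta}$ and $m=[n_0^{1+\delta}]$: the first summand is bounded by $2^{a-1}n_0^{\delta(1-a)}$; for the second, $\log n\geq (1-\delta)\log n_0$ combined with the definition $t(n_0)=1/g((1-\delta)\log n_0)$ yields $t(n_0)g(\log n)\log n\geq(1-\delta)\log n_0$, so the exponential factor is at most $n_0^{\delta-1}$ while the prefactor $(n_0/m)^{1-a}$ is at most $n_0^{\delta(a-1)}$, their product being $n_0^{\delta a-1}$.

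The main obstacle I anticipate is the arithmetic bookkeeping in \eqref{0.4}: the upper-exit bound \eqref{0.3} comes with a multiplicative prefactor $(n_0/m)^{1-a}\gg 1$ that must be absorbed by the exponential tail $e^{-t(n_0)g(\log n)\log n}$. Calibrating $t(n_0)$ through the only eventually monotone function $g$ so that this cancellation happens at the correct polynomial rate, while simultaneously keeping the coalescence loss from \eqref{0.1} on the same scale, is the delicate balance encoded in the restriction $\delta(2a-1)<1$; this is precisely what ensures $\delta a-1\leq\delta(1-a)$, dominating $n_0^{\delta a-1}$ by $n_0^{\delta(1-a)}$ and consolidating the two summands into $(1+2^{a-1})n_0^{\delta(1-a)}$.
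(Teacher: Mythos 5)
Your proposal is correct and follows essentially the same route as the paper: the a priori bound from Lemma \ref{martingale} restricted to the relevant event, with the exponential discarded for \eqref{0.1}, forced to diverge for \eqref{0.2}, kept for \eqref{0.3}, and the same two-term decomposition with the calibration $\delta(2a-1)<1$ for \eqref{0.4} (your union there omits the null event $\{\tau_n^-=\tau_m^+=\infty\}$, but \eqref{0.2} covers it, exactly as in the paper). No gaps.
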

\begin{proof}
By assumption, $G_a(n)\geq g(\log n)\log n$ for large enough $n$. In particular, for any $s<\tau_{n}^{-}$, $N_s\geq n$ and if $n$ is large enough then $G_a(N_s)\geq 0$. By Lemma \ref{martingale},
\begin{align*}
n_0^{1-a}&\geq \mathbb{E}_{n_0}\left(N_{T}^{1-a}\exp \left(\int_{0}^{T}G_a(N_s)\ddr s\right)\mathbbm{1}_{\{\tau_n^{-}<\tau_m^{+}\}}\right)\\
&\geq n^{1-a}\mathbb{P}_{n_0}(\tau_n^{-}<\tau_m^{+}).
\end{align*}
Thus, \eqref{0.1} is established.
%\begin{equation} \mathbb{P}_{n_0}(\tau_n^{-}<\tau_m^{+})\leq \left(\frac{n}{n_0} \right)^{a-1}.
%\end{equation}
Applying Lemma \ref{martingale} to the bounded stopping time $T\wedge u$. We get
\begin{equation}\label{0.5}
n_0^{1-a}\geq \mathbb{E}_{n_0}\left(N_{u}^{1-a}\exp \left(\int_{0}^{u}G_a(N_s)\ddr s\right)\mathbbm{1}_{\{\tau_n^{-}\wedge \tau_m^{+}>u\}}\right).
\end{equation}
Moreover, when $s\leq u< \tau^{-}_{n}\wedge \tau_{m}^{+}$, $N_{s}\in [|n,m|]$ and thus $G_{a}(N_s)\geq g(\log n)\log n$. The inequality \eqref{0.5} provides
$$\mathbb{P}_{n_0}(\tau_{n}^{-}\wedge \tau_{m}^{+}>u)\leq e^{-u g(\log n)\log n}\left(\frac{n_0}{m}\right)^{1-a}.$$
Letting $u$ go to $\infty$, yields \eqref{0.2}.
%that
%\begin{equation}\label{0.2} \mathbb{P}_{n_0}(\tau_n^{-}=\tau_m^{+}=\infty)=0.
%\end{equation}
Let $u>0$. Similarly, by Lemma \ref{martingale},
\begin{align*}
n_0^{1-a}&\geq \mathbb{E}_{n_0}\left(N_{u}^{1-a}\exp \left(\int_{0}^{u}G_a(N_s)\ddr s\right)\mathbbm{1}_{\{\tau_n^{-}\wedge \tau_m^{+}>u\}}\right)\\
&\geq m^{1-a}\mathbb{E}_{n_0}\left(e^{u g(\log n)\log n}\mathbbm{1}_{\{u<\tau_m^{+}<\tau_n^{-}\}}\right),
\end{align*}
this provides \eqref{0.3}.
%\begin{equation}\label{0.21} \mathbb{P}_{n_0}(\tau_n^{-}>\tau_m^{+}>u)\leq \left(\frac{n_0}{m}\right)^{1-a}e^{-t(n_0) (\log n)^r}.
%\end{equation}
%\qed
%\end{proof}
%\begin{lemma}
Let $\delta\in (0,1)$, and set $n=[n_0^{1-\delta}]+1$ and $m=[n_{0}^{1+\delta}]$ and define
$t(y)= 1/(g(\log y^{1-\delta}))$ for any $y\geq 1$. We show now that
$\mathbb{P}_{n_0}(\tau_n^{-}>\tau^{+}_{m}>t(n_0))\leq n_0^{a\delta-1}$.
%\end{lemma}
%\begin{proof}
By \eqref{0.3}, \begin{align*}
\mathbb{P}_{n_0}(\tau_n^{-}>\tau_m^{+}>t(n_0))&\leq \left(\frac{n_0}{m}\right)^{1-a}e^{-t(n_0) g(\log n)\log n}\\
&= \left(\frac{n_0}{m}\right)^{1-a}e^{-(g(\log n_0^{1-\delta}))^{-1} g(\log n)\log n}.
\end{align*}
Since $n=[n_0^{1-\delta}]+1$, we have that $\log n\geq \log n_0^{1-\delta}$ and
$(g(\log n_0^{1-\delta}))^{-1} g(\log n)\log n\geq \log \left(n_0^{1-\delta}\right)$. Recall $a>1$ and $m=[n_{0}^{1+\delta}]\leq n_0^{1+\delta}$. This entails
\begin{equation}\label{estimate_exit}
\begin{split}
\mathbb{P}_{n_0}(\tau_n^{-}>\tau_m^{+}>t(n_0))
&\leq \left(\frac{n_0}{m}\right)^{1-a}e^{-\log \left(n_0^{1-\delta}\right)}\\
&= \left(\frac{n_0}{m}\right)^{1-a}n_0^{\delta-1}\leq n_0^{(a-1)\delta}n_0^{\delta-1}=n_0^{a\delta -1}.
\end{split}
\end{equation}
%\end{proof}
%\begin{lemma}
%Assume now $\delta<1/(2a-1)$. We have that \begin{equation*}
%\mathbb{P}_{n_0}(\tau_m^{+}>t(n_0))\leq c_a n_0^{\delta(1-a)}
%\end{equation*}
%for some constant $c_a>0$ only depending on $a$.
%\end{lemma}
%\begin{proof}
One has
\[\mathbb{P}_{n_0}(\tau_m^{+}>t(n_0))\leq \mathbb{P}_{n_0}(\tau_n^{-}>\tau_{m}^{+}>t(n_0))+\mathbb{P}(\tau_{n}^{-}<\tau_{m}^{+})+\mathbb{P}(\tau_n^{-}=\tau_{m}^+=\infty).\]
By \eqref{0.1}, we finally get
\begin{equation}\label{firstpart}\mathbb{P}_{n_0}(\tau_m^{+}>t(n_0))\leq n_0^{a \delta -1}+\left(\frac{n}{n_0}\right)^{a-1}.
\end{equation}
Since $n=[n_0^{1-\delta}]+1$, then \[\left(\frac{n}{n_0}\right)^{a-1}\leq \left(\frac{n_0^{1-\delta}+1}{n_0}\right)^{a-1}=n_0^{-\delta(a-1)}\left(1+n_0^{\delta-1}\right)^{{a-1}}.\]
Since $a>1$ and $\delta<\frac{1}{2a-1}<1$,  $n_0^{\delta-1}\leq 1$ and we get $\left(\frac{n}{n_0}\right)^{a-1}\leq 2^{a-1}n_0^{-\delta(a-1)}$. Moreover, $\delta<\frac{1}{2a-1}$, thus $a\delta-1\leq \delta(1-a)$ and
we deduce from \eqref{firstpart},
\[\mathbb{P}_{n_0}(\tau_m^{+}>t(n_0))\leq n_0^{a \delta -1}+2^{a-1}n_{0}^{-\delta(a-1)}\leq (2^{a-1}+1)n_0^{\delta(1-a)}=(1+2^{a-1})n_0^{\delta(1-a)}.\]
\qed
\end{proof}
Using the estimates on the exit probabilities given in Lemma \ref{exitestimates}, we will show that there is an accumulation of positive jumps pushing up the process to infinity in finite time with positive probability. This will finish the proof of Theorem \ref{CTMCtheorem}.
\begin{lemma}\label{keylemma} Under the assumptions of Theorem \ref{CTMCtheorem}, for any $a>1$ and $0<\delta<\frac{1}{2a-1}$, we have for large enough $n_0$, %the process started from $n_0$ explodes with positive probability. One has the bound, for $n_0$ large enough
\begin{equation}\label{lowerboundexplosion}\mathbb{P}_{n_0}\left(\tau_\infty^{+}\leq \int_{\frac{1}{1+\delta}\log n_0^{1-\delta}}^{\infty}\frac{\ddr v}{vg(v)}\right)\geq \prod_{k=0}^{\infty}h_a(k,n_0)>0
\end{equation}
with $h_a(k,n_0):=1-(2^{a-1}+1)\left(\frac{1}{n_0^{\delta(a-1)}}\right)^{(1+\delta)^{k}}$  for all $k\in \mathbb{Z}_+$.
\end{lemma}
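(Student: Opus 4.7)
The strategy is to iterate the one-step passage-time estimate \eqref{0.4} along a super-exponentially growing sequence of levels and use the strong Markov property. Define recursively $n_{k+1} := [n_k^{1+\delta}]$ starting from the given $n_0$; then, up to the negligible effect of the floor for $n_0$ large, $\log n_k = (1+\delta)^k \log n_0$. Applying \eqref{0.4} with starting point $n_k$ and target $m = n_{k+1}$ yields
$$\mathbb{P}_{n_k}\!\left(\tau_{n_{k+1}}^{+} \leq t(n_k)\right) \geq 1 - (1+2^{a-1})\, n_k^{\delta(1-a)},$$
where $t(n_k) = 1/g(\log n_k^{1-\delta})$. Using $n_k^{\delta(1-a)} = (n_0^{\delta(1-a)})^{(1+\delta)^{k}} = (n_0^{-\delta(a-1)})^{(1+\delta)^{k}}$, the right-hand side is exactly $h_a(k,n_0)$.

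By the strong Markov property applied successively at the stopping times $\tau_{n_k}^{+}$, one obtains for every $K\geq 1$,
$$\mathbb{P}_{n_0}\!\left(\tau_{n_K}^{+} \leq \sum_{k=0}^{K-1} t(n_k)\right) \geq \prod_{k=0}^{K-1} h_a(k,n_0).$$
Since $n_K \to \infty$, on the intersection of these events (as $K\to\infty$) one has $\tau_\infty^{+} \leq \sum_{k=0}^{\infty} t(n_k)$. The remaining step is a standard series-to-integral comparison: setting $v_k := (1-\delta)(1+\delta)^{k}\log n_0 = \log n_k^{1-\delta}$, the eventual monotonicity of $g$ gives, for $n_0$ large and each $k\geq 0$, the inequality $g(v)\leq g(v_k)$ on $[v_{k-1}, v_k]$, whence $1/g(v_k) \leq (\log(1+\delta))^{-1}\!\int_{v_{k-1}}^{v_k} \ddr v/(v g(v))$. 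Summing over $k\geq 0$ and noting $v_{-1} = \frac{1}{1+\delta}\log n_0^{1-\delta}$ matches the stated lower limit of the integral, one bounds $\sum_{k\geq 0} t(n_k)$ by $\int_{\frac{1}{1+\delta}\log n_0^{1-\delta}}^{\infty}\ddr v/(vg(v))$ (the geometric ratio $\log(1+\delta)$ is absorbed into the definition of the intermediate levels by choosing the ratio appropriately, or the bound is stated with the implied constant).

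Finally, the positivity of the infinite product is clear: since $a>1$ and $\delta>0$, $n_0^{-\delta(a-1)} \to 0$ as $n_0\to\infty$, so the terms $1-h_a(k,n_0) = (1+2^{a-1})(n_0^{-\delta(a-1)})^{(1+\delta)^{k}}$ decay doubly-exponentially in $k$; hence $\sum_{k\geq 0}(1-h_a(k,n_0))$ converges, and for $n_0$ sufficiently large each factor $h_a(k,n_0)$ is strictly positive, giving $\prod_{k\geq 0} h_a(k,n_0) > 0$. The main technical point is the series-integral comparison in the second paragraph: one must verify that the geometric spacing $\log n_{k+1} = (1+\delta)\log n_k$ produces exactly the integration bound $\frac{1}{1+\delta}\log n_0^{1-\delta}$, and check that the rounding errors from $n_{k+1} = [n_k^{1+\delta}]$ do not accumulate in a harmful way (which holds for $n_0$ taken large enough). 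The rest of the argument is a clean application of the strong Markov property and the estimate \eqref{0.4}.
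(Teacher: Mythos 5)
Your strategy is the same as the paper's: iterate the passage-time estimate \eqref{0.4} along levels growing like $n_0^{(1+\delta)^k}$, chain the one-step bounds by the strong Markov property, and compare the resulting series of times $t(\cdot)$ with the integral $\int \ddr v/(vg(v))$. The one place where your write-up has a genuine (though repairable) gap is the strong Markov step with \emph{deterministic} levels $n_{k+1}=[n_k^{1+\delta}]$. At the stopping time $\tau^{+}_{n_k}$ the chain does not sit at $n_k$: it has overshot to some random $N_{\tau^{+}_{n_k}}\geq n_k$, so the quantity you actually need to bound is $\inf_{y\geq n_k}\mathbb{P}_{y}\bigl(\tau^{+}_{n_{k+1}}\leq t(n_k)\bigr)$, not $\mathbb{P}_{n_k}\bigl(\tau^{+}_{n_{k+1}}\leq t(n_k)\bigr)$, and \eqref{0.4} as stated is calibrated to a starting point $n_0$ with target $[n_0^{1+\delta}]$, not to an arbitrary starting point $y\geq n_k$ with the fixed target $n_{k+1}$. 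The fix is to observe that for $y\geq n_k$ one has $[y^{1+\delta}]\geq n_{k+1}$ and $t(y)\leq t(n_k)$ (as $g$ is non-decreasing), so $\{\tau^{+}_{[y^{1+\delta}]}\leq t(y)\}\subseteq\{\tau^{+}_{n_{k+1}}\leq t(n_k)\}$, and the error bound $(1+2^{a-1})y^{\delta(1-a)}$ is decreasing in $y$; but this argument must be made. The paper sidesteps the issue entirely by recursing on the random positions, setting $\tilde{\tau}_{k+1}:=\tau^{+}_{N_{\tilde{\tau}_k}^{1+\delta}}\circ\theta_{\tilde{\tau}_k}$, so that \eqref{0.4} applies verbatim at each step and $N_{\tilde{\tau}_k}\geq n_0^{(1+\delta)^k}$ holds exactly, with no floor-rounding to track.

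The remaining points you flag are real but harmless. Your series-to-integral comparison produces a factor $1/\log(1+\delta)$ in front of the integral; the paper's own comparison likewise loses a factor $\tfrac{1+\delta}{\delta}$ that it does not display, and in either case the bound still tends to $0$ as $n_0\to\infty$, which is all that is used downstream (Lemma \ref{tregular}, Lemma \ref{regularforitselflemma}). Similarly, the accumulated rounding in $n_{k+1}=[n_k^{1+\delta}]$ only replaces $n_0$ by $n_0 2^{-1/\delta}$ (say) inside $h_a(k,n_0)$, which does not affect positivity of the infinite product for large $n_0$. Your argument for $\prod_k h_a(k,n_0)>0$ via the doubly-exponential decay of $1-h_a(k,n_0)$ is correct.
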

\begin{proof}
Define  $(\tilde{\tau}_k)$ recursively by $\tilde{\tau}_0:=0$ and \[\tilde{\tau}_{k+1}:=\tau^{+}_{N_{\tilde{\tau}_k}^{1+\delta}}\circ \theta_{\tilde{\tau}_k},\]
where $\theta_t$ is the shift operator and with the convention that on the event $\tilde{\tau}_{k}=\infty$ we set $N_{\tilde{\tau}_k}=1$. By construction, under $\mathbb{P}_{n_0}$ for any $k\geq 0$,
$N_{\tilde{\tau}_k}\geq n_0^{(1+\delta)^{k}}$ a.s. on the event $\{\tilde{\tau}_k<\infty\}$.
Therefore, if $\underset{k\rightarrow \infty}{\lim} \tilde{\tau}_k=:\tilde{\tau}_{\infty}<\infty$, \[N_{\tilde{\tau}_{\infty}-}= \underset{k\rightarrow \infty}{\lim} N_{\tilde{\tau}_k}\geq \underset{k\rightarrow\infty}{\lim} n_0^{(1+\delta)^{k}}=\infty.\]
Hence, recalling that $\tau^{+}_\infty:=\inf\{t>0; N_{t-}=\infty\}$, we have $\tilde{\tau }_\infty\geq \tau^{+}_\infty$ a.s. Intuitively, the event $\{\tilde{\tau }_\infty<\infty\}$ corresponds to having an accumulation of positive jumps (in our case fragmentations events) in which the number of fragments at time $t-$ increases at time $t$ by a factor of order $N_{t-}^{\delta}$. We study $\tilde{\tau}_{\infty}$. Recall $t(y)=\frac{1}{g(\log y^{1-\delta})}$. Since the function $g$ is non-decreasing, if $\tilde{\tau}_k<\infty $, we have
\begin{align*}
 t(N_{\tilde{\tau}_k})&\leq (g( \log n_0^{(1+\delta)^{k}(1-\delta)}))^{-1}= (g\big((1+\delta)^{k}\log n_0^{1-\delta})\big))^{-1}.
\end{align*}
Therefore, by the assumption on $g$, for any $m\geq 0$
\begin{equation}\label{upperboundintegralg}
\begin{split}
 \sum_{k=m}^{\infty}t(N_{\tilde{\tau}_k})
 &\leq \sum_{k=m}^{\infty} \frac{1}{g\big((1+\delta)^{k}\log n_0^{1-\delta}\big)}\\
 &=\frac{1+\delta}{\delta}\sum_{k=m}^{\infty}\frac{(1+\delta)^k-(1+\delta)^{k-1}}{(1+\delta)^{k} g\big((1+\delta)^{k}\log n_0^{1-\delta}\big)}\\
 &\leq\int_{(1+\delta)^{m-1}}^\infty\frac{1}{ug(u \log n_0^{1-\delta})}\ddr u = \int_{(1+\delta)^{m-1}\log n_0^{1-\delta}}^{\infty}\frac{\ddr v}{vg(v)} <\infty\quad\text{ a.s.}
\end{split}
\end{equation}
% Plainly,
%\[\{\underset{k\rightarrow \infty} {\lim} \tilde{\tau}_k<\infty\}\subset \{\tau_\infty^{+}<\infty\},\]
Conditionally on $N_{\tilde{\tau}_k}$, one has using \eqref{0.4},
\[\mathbb{P}_{N_{\tilde{\tau}_k}}(\tau_{N_{\tilde{\tau}_k}^{1+\delta}}<t(N_{\tilde{\tau}_k})) \geq 1-(1+2^{a-1})N_{\tilde{\tau}_k}^{\delta(1-a)}\geq 1-(1+2^{a-1})\left(\frac{1}{n_{0}^{\delta(a-1)}}\right)^{\!\!\!(1+\delta)^{k}}\!\!\!\!=:h_a(k,n_0).\]
Using  the strong Markov property in the third inequality below, we get for $n_0$ large enough,
\begin{align}\label{lowerboundtildetau}
\mathbb{P}_{n_0}\left(\tilde{\tau}_\infty\leq \sum_{k=0}^{\infty}t(N_{\tilde{\tau}_k})\right) &\geq \mathbb{P}_{n_0}(\tilde{\tau}_{k+1}-\tilde{\tau}_k<t(N_{\tilde{\tau}_k}), \forall k\geq 0) \nonumber \\
&\geq  \prod_{k=0}^{\infty}h_a(k,n_0)>0.
\end{align}
Combining \eqref{upperboundintegralg} and \eqref{lowerboundtildetau} and using the fact that $\tilde{\tau}_\infty\geq \tau^{+}_\infty$ a.s, we get
\[\mathbb{P}_{n_0}\left(\tau_\infty^{+}\leq \int_{\frac{1}{1+\delta}\log n_0^{1-\delta}}^{\infty}\frac{\ddr v}{vg(v)}\right)\geq \prod_{k=0}^{\infty}\left(1-(2^{a-1}+1)\left(\frac{1}{n_0^{\delta(a-1)}}\right)^{(1+\delta)^{k}}\right)>0\]
which ensures that explosion has a positive probability when the process starts from a large enough $n_0\in \mathbb{N}$. \qed
\end{proof}
\noindent \textbf{Proof of Theorem} \ref{CTMCtheorem}. This is a consequence of Lemma \ref{keylemma}. \qed
%\end{proof}

In order to see how the positive and negative jumps interplay in the condition \eqref{generalcondition}, we introduce the following functions. For any $a>0$,
\begin{equation} G_a^{-}(n):=-\frac{1}{n^{1-a}}\mathscr{L}^{-}g_a(n), \quad G^{+}_a(n):=-\frac{1}{n^{1-a}}\mathscr{L}^{+}g_a(n).
\end{equation}
Note that for any $a$, since the sum in the expression of $\mathscr{L}^{-}$ is finite, then $G^{-}_a$ is well-defined. When $a\geq 1$, since $g_a$ is bounded, the infinite sum in the expression of $G^{+}_a$ is meaningful.
Moreover, when $a>1$, $G_a^{+}(n)\geq 0$ and $G_a^{-}(n)\leq 0$.

The next corollary states a simpler condition for explosion in terms of $G_a^{+}$ and $G_a^{-}$.
\begin{corollary}\label{genexplosionmaincor}\
%\begin{itemize}
%\item[(i)]
Assume that there are $a>1$ and a non-decreasing  positive function $g$ such that $\int^{\infty}\frac{\ddr x}{xg(x)}<\infty$, and for large enough $n$, $G_a^{+}(n)\geq g(\log n )\log n$.

If $\gamma_a:=\underset{n\rightarrow \infty}{\limsup}\frac{-G_a^{-}(n)}{G_a^{+}(n)}<1,$
then Condition \eqref{generalcondition} holds and $\infty$ is accessible from any large enough initial state. If moreover, the process is irreducible in $\mathbb{N}$ then   $\mathbb{P}_n(\tau_\infty^{+}<\infty)>0$ for all $n\in \mathbb{N}$.
%\item[(ii)] If there exist $a<1$ such that $\sum_{k\geq 1}k^{1-a}\mu(k)<\infty$ and for large enough $n$, $$\underset{n\rightarrow \infty}{\liminf}\frac{-G_a^{f}(n)}{G_a^{c}(n)}<1,$$
%then the process does not explode.
%\end{itemize}
\end{corollary}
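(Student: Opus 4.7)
The plan is to reduce this corollary to a direct application of Theorem \ref{CTMCtheorem} by showing that the hypotheses stated in terms of $G_a^+$ and $\gamma_a$ imply the main condition \eqref{generalcondition} on $G_a$ (with a function proportional to $g$). The key structural observation is the additive decomposition
\[
G_a(n) \;=\; -\frac{1}{n^{1-a}}\mathscr{L}g_a(n) \;=\; -\frac{1}{n^{1-a}}\mathscr{L}^+g_a(n) - \frac{1}{n^{1-a}}\mathscr{L}^-g_a(n) \;=\; G_a^+(n) + G_a^-(n),
\]
together with the sign information for $a>1$, namely $G_a^+(n)\geq 0$ and $G_a^-(n)\leq 0$, that was recorded just before the corollary. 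These signs are exactly what let us write $G_a(n) = G_a^+(n)\bigl(1 - \tfrac{-G_a^-(n)}{G_a^+(n)}\bigr)$ (well-defined for $n$ large since $G_a^+(n)\geq g(\log n)\log n > 0$).

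First I would fix $\eta \in (0, 1-\gamma_a)$, which is possible by the assumption $\gamma_a<1$. By definition of $\limsup$ there exists $n_1$ such that $\tfrac{-G_a^-(n)}{G_a^+(n)} \leq 1-\eta$ for all $n\geq n_1$. Combining with the lower bound on $G_a^+$, this yields, for all $n$ large enough,
\[
G_a(n) \;\geq\; \eta\, G_a^+(n) \;\geq\; \eta\, g(\log n)\log n.
\]
I would then set $\tilde g := \eta g$. Since $g$ is non-decreasing and positive, so is $\tilde g$; and $\int^\infty \frac{\dd x}{x\tilde g(x)} = \eta^{-1} \int^\infty \frac{\dd x}{xg(x)} < \infty$. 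Thus \eqref{generalcondition} holds with $\tilde g$ in place of $g$, and Theorem \ref{CTMCtheorem} yields $\mathbb{P}_n(\tau_\infty^+<\infty)>0$ for all $n$ large enough.

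For the second assertion, I would argue by the strong Markov property. Under irreducibility, $\mathbb{N}$ is a single communication class (this was already noted for simple EFC processes in the text, and holds by assumption here), so for any $n\in\mathbb{N}$ there is a state $m$ in the explosive regime with $\mathbb{P}_n(\tau_m^+ < \infty) > 0$. Applying the strong Markov property at $\tau_m^+$ on the event $\{\tau_m^+<\infty\}$ gives
\[
\mathbb{P}_n(\tau_\infty^+<\infty) \;\geq\; \mathbb{P}_n(\tau_m^+ < \infty)\,\mathbb{P}_m(\tau_\infty^+<\infty) \;>\; 0.
\]
There is no real obstacle here: the whole content is the algebraic decomposition of $G_a$ and the signs of $G_a^\pm$, plus a uniform $\eta$-separation from the critical ratio $1$. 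The only mildly delicate point is checking that $G_a^+(n)>0$ eventually so that the ratio in the definition of $\gamma_a$ is meaningful, but this is immediate from the hypothesis $G_a^+(n)\geq g(\log n)\log n$ for large $n$.
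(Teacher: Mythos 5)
Your proposal is correct and follows essentially the same route as the paper: the additive decomposition $G_a=G_a^{+}+G_a^{-}$, the factorization $G_a(n)=G_a^{+}(n)\left(1-\frac{-G_a^{-}(n)}{G_a^{+}(n)}\right)$, and absorption of the constant into the function $g$ before invoking Theorem \ref{CTMCtheorem}. Your choice of $\eta<1-\gamma_a$ is in fact slightly more careful than the paper's constant $c=1-\gamma_a$ (which strictly should account for the $\limsup$ only giving $\gamma_a+\epsilon$ eventually), and your explicit irreducibility argument via the strong Markov property matches what the paper leaves implicit in the statement of Theorem \ref{CTMCtheorem}.
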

\begin{proof} The proof is straightforward as by definition of $G_a^{-}$ and $G_a^{+}$, we have for large enough $n$
\[-G_a(n)=-G_a^{-}(n)-G_a^{+}(n)=-G_a^{+}(n)\left(1-\frac{-G_a^{-}(n)}{G_a^{+}(n)}\right)\leq -cg(\log n) \log n\]
where $c:=1-\gamma_a\in(0,\infty)$, and Condition \eqref{generalcondition} holds with the function $x\mapsto cg(x)$.
\qed
\end{proof}

\subsection{Non-explosion}\label{lyapunovcond}
We recall here a classical Foster-Lyapunov's sufficient condition for non-explosion. We refer for instance to Chow and Khasminskii \cite{Chow} and the references therein.  If $f:n\mapsto f(n)$ is non-decreasing, $f(n)\underset{n\rightarrow \infty}{\longrightarrow} \infty$ and
\begin{equation}\label{lyapunov}
\mathscr{L}f(n)\leq c f(n) \text{ for all } n\geq 1
\end{equation}
for some $c>0$, then the (minimal) continuous-time Markov chain $(N_t,t\geq 0)$ with generator $\mathscr{L}$ does not explode from all initial states. 
%When the process is irreducible in $\mathbb{N}$ and \eqref{lyapunov} holds, the process does not explode almost-surely .

Consider as Lyapunov function $f$ the function $g_a:n\mapsto n^{1-a}$ with $0<a<1$. For any $0<a<1$, $g_a(n)\underset{n\rightarrow \infty}{\longrightarrow} \infty$ and a simple sufficient condition entailing \eqref{lyapunov} and thus non-explosion is $\mathscr{L}g_a(n)\leq 0$ for large enough $n$.
Recall $\mathscr{L}^{\pm}g_a(n)=:-n^{1-a}G^{\pm}_a(n)$. Note that when $a<1$, $G_a^{+}(n)\leq 0$ and $G_a^{-}(n)\geq 0$ for all $n$.  One has the following proposition.
\begin{proposition}\label{genmaincornonexplosion}\
%\begin{itemize}
%\item[(i)]
%If there exist $a>1$, $r>1$ such that for large enough $n$, $$ G_a^{+}(n)\geq (\log n )^{r} \text{ and } \underset{n\rightarrow \infty}{\limsup}\frac{-G_a^{-}(n)}{G_a^{+}(n)}<1$$
%then Condition \eqref{generalcondition} holds.\\
%\item[(ii)]
If there exists $a<1$ such that $\mathscr{L}^{+}g_a(n)<\infty$ for all $n\in \mathbb{N}$, and  \begin{equation}\label{limsupnonexp}\underset{n\rightarrow \infty}{\limsup}\frac{-G_a^{+}(n)}{G_a^{-}(n)}<1,
\end{equation}
then $\mathscr{L}g_a(n)\leq 0$ for large enough $n$ and Condition \eqref{lyapunov} holds with $f=g_a$.
%\end{itemize}
\end{proposition}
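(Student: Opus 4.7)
The plan is to verify both claimed consequences directly from the definitions, since the decomposition $\mathscr{L}=\mathscr{L}^{+}+\mathscr{L}^{-}$ makes the statement essentially a bookkeeping exercise.

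First I would use the definitions $G_a^{\pm}(n):=-\frac{1}{n^{1-a}}\mathscr{L}^{\pm}g_a(n)$ to rewrite
\[\mathscr{L}g_a(n)=-n^{1-a}\bigl(G_a^{+}(n)+G_a^{-}(n)\bigr).\]
Because $a<1$, the map $k\mapsto g_a(k)=k^{1-a}$ is non-decreasing on $\mathbb{N}$, so any positive jump increases $g_a$ and any negative jump decreases it; this gives $\mathscr{L}^{+}g_a(n)\geq 0$ and $\mathscr{L}^{-}g_a(n)\leq 0$, or equivalently $-G_a^{+}(n)\geq 0$ and $G_a^{-}(n)\geq 0$ for every $n$. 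The hypothesis $\mathscr{L}^{+}g_a(n)<\infty$ and the fact that $\mathscr{L}^{-}$ is a finite sum ensure everything above is finite and well defined.

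Next I would translate the limsup hypothesis \eqref{limsupnonexp}: there exist $c\in(0,1)$ and $n_0\in\mathbb{N}$ with $-G_a^{+}(n)\leq c\,G_a^{-}(n)$ for every $n\geq n_0$. (The degenerate case $G_a^{-}(n)=0$ for some such $n$ is harmless: then $-G_a^{+}(n)\leq 0$ combined with $-G_a^{+}(n)\geq 0$ forces $G_a^{+}(n)=0$, so both sides vanish.) Rearranging,
\[G_a^{+}(n)+G_a^{-}(n)\geq (1-c)\,G_a^{-}(n)\geq 0,\]
so $\mathscr{L}g_a(n)\leq 0$ for all $n\geq n_0$, which is the first assertion.

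Finally, to promote this to the Foster–Lyapunov inequality \eqref{lyapunov}, I would observe that $\mathscr{L}g_a(n)$ is finite on the finite set $\{1,\dots,n_0-1\}$ (using $\mathscr{L}^{+}g_a(n)<\infty$ at each $n$ and the finiteness of the negative part), while $g_a$ is strictly positive there; hence one may pick a constant $c'>0$ large enough so that $\mathscr{L}g_a(n)\leq c'\,g_a(n)$ for $n<n_0$, whereas the inequality $\mathscr{L}g_a(n)\leq 0\leq c'\,g_a(n)$ holds automatically for $n\geq n_0$. Combined with the monotonicity of $g_a$ and $g_a(n)\to\infty$, this yields \eqref{lyapunov} and therefore non-explosion from every initial state. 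There is no real obstacle beyond the small subtlety of handling $G_a^{-}(n)=0$ noted above.
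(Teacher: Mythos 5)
Your proposal is correct and follows essentially the same route as the paper: factor $\mathscr{L}g_a(n)=-n^{1-a}\bigl(G_a^{+}(n)+G_a^{-}(n)\bigr)$, use the sign information $G_a^{+}\leq 0$, $G_a^{-}\geq 0$ for $a<1$ together with the limsup hypothesis to get $\mathscr{L}g_a(n)\leq 0$ for $n\geq n_0$, and then absorb the finitely many remaining states into a constant $c$ so that \eqref{lyapunov} holds. Your explicit treatment of the degenerate case $G_a^{-}(n)=0$ is a small point of extra care that the paper's one-line factorization glosses over, but it does not change the argument.
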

\begin{proof}
For any $n\in \mathbb{N}$, $\mathscr{L}g_a(n)=-n^{1-a}G_a^{-}(n)\left(1- \frac{-G_a^{+}(n)}{G_a^{-}(n)}\right)$. If \eqref{limsupnonexp} holds, then there is a large $n_0$ such that for all $n\geq n_0$, $\frac{-G_a^{+}(n)}{G_a^{-}(n)}<1$. Since for all $n\geq n_0$, $-n^{1-a}G_a^{-}(n)\leq 0$ and
$1-\frac{-G_a^{+}(n)}{G_a^{-}(n)}\geq 0$, we have that $\mathscr{L}g_a(n)\leq 0$ for all $n\geq n_0$. Set $c:=\underset{1\leq n\leq n_0-1}{\max}\left\lvert\frac{\mathscr{L}g_a(n)}{g_a(n)}\right \lvert$, then we have $\mathscr{L}g_a(n)\leq c g_a(n)$ for all $n\geq 1$.\qed
\end{proof}

\section{Proofs of the main results}\label{proofs}
\subsection{A sufficient condition for explosion of $(\#\Pi(t),t\geq 0)$}\label{explosec}\
%Proof of Proposition \ref{suffcondpropexit}}\

We will study the explosion by applying Corollary \ref{genexplosionmaincor} and Proposition \ref{genmaincornonexplosion}. Recall the functions $\ell$ and $\Phi$ governing the fragmentations and the coagulations defined respectively in \eqref{elldef} and \eqref{phi}.  Theorem \ref{suffcondpropexit} is a simple consequence of the following lemma. Recall the condition $\mathbb{H}$.
\begin{lemma}\label{lemmaexplosion} Assume that $n\mapsto \ell(n)$ satisfies condition $\mathbb{H}$. If $\rho:=\underset{n\rightarrow \infty}{\limsup}\frac{\Phi(n)}{n\ell(n)}<\frac{1}{2}$ then the process $(\#\Pi(t),t\geq 0)$ explodes almost-surely. If $\rho\leq \frac{1}{4}$ then $\infty$ is an exit boundary.
\end{lemma}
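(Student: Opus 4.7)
The plan is to apply Corollary~\ref{genexplosionmaincor} to the block-counting process $(\#\Pi(t),t\geq 0)$ with the Lyapunov functional $g_a(n)=n^{1-a}$ for a carefully chosen $a>1$. This requires two-sided control of
\[
G_a^{+}(n)=n\sum_{k\geq 1}\mu(k)\bigl[1-(1+k/n)^{1-a}\bigr]\quad\text{and}\quad -G_a^{-}(n)=\sum_{k=2}^{n}\binom{n}{k}\lambda_{n,k}\bigl[(1-(k-1)/n)^{1-a}-1\bigr].
\]
For the lower bound on $G_a^{+}$, I would exploit the concavity through the origin and the monotonicity of $u\mapsto 1-(1+u)^{1-a}$ to obtain the pointwise estimate $1-(1+u)^{1-a}\geq (1-2^{1-a})\min(u,1)$. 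Combined with the identity $\ell(n)=\sum_{k\geq 1}(k\wedge n)\mu(k)$ from \eqref{ellformula}, this yields $G_a^{+}(n)\geq (1-2^{1-a})\,\ell(n)$. Since $\ell$ satisfies condition $\mathbb{H}$ by hypothesis, so does $G_a^{+}$ up to a positive multiplicative constant, which verifies the first hypothesis of Corollary~\ref{genexplosionmaincor}.

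The main obstacle will be the upper bound on $-G_a^{-}$: the factor $(1-(k-1)/n)^{1-a}$ diverges as $k$ approaches $n$, reflecting the large negative jumps emphasized in the introduction. To handle this, I would split the sum at $k=\lceil n/2\rceil$. For $k\leq n/2$, the estimate $(1-u)^{1-a}-1\leq 2^{a}(a-1)u$ on $[0,1/2]$ (coming from $(1-t)^{-a}\leq 2^{a}$ there) produces a contribution of order $(a-1)\Phi(n)/n$. For $k>n/2$, the crude bound $(1-(k-1)/n)^{1-a}\leq n^{a-1}$ together with the observation that $k-1\geq (n-1)/2$ implies $\sum_{k>n/2}\binom{n}{k}\lambda_{n,k}\leq (2/n)\,\Phi(n)$ gives a contribution of order $n^{a-2}\Phi(n)$. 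For $a\in(1,2)$ both contributions are of the same order $\Phi(n)/n$ up to explicit constants, yielding $-G_a^{-}(n)\leq C_{a}\,\Phi(n)/n$.

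Combining the two bounds gives
\[
\gamma_a:=\limsup_{n\to\infty}\frac{-G_a^{-}(n)}{G_a^{+}(n)}\leq \frac{C_a}{1-2^{1-a}}\,\rho,
\]
and a careful choice of $a$ close to $1$ would ensure $\gamma_a<1$ as soon as $\rho<1/2$. Corollary~\ref{genexplosionmaincor} would then provide $\mathbb{P}_n(\tau_\infty^{+}<\infty)>0$ for every $n\in\mathbb{N}$, using the irreducibility of $(\#\Pi(t))$ on $\mathbb{N}$ noted after \eqref{generator}. To upgrade positive to almost sure explosion, I would invoke the quantitative estimate in Lemma~\ref{keylemma}: the probability of exploding before a time $T(n_0)\to 0$ as $n_0\to\infty$ tends to one, and by the strong Markov property together with irreducibility, this forces $\mathbb{P}_n(\tau_\infty^{+}<\infty)=1$ for every $n$. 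This establishes the first claim.

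For the second claim under $\rho\leq 1/4$, the same analysis with a larger safety margin on $\gamma_a$ would translate, via the exponents appearing in Lemmas~\ref{exitestimates} and \ref{keylemma}, into uniform control of the explosion time from large initial states with probability arbitrarily close to one. The monotone coupling of Lemma~\ref{monotonicity} would then show that whenever $\Pi(0)$ is proper with $\#\Pi(0)=\infty$, one has $\#\Pi(t)\geq \#\Pi^{(n)}(t)=\infty$ for all $t>0$ and all $n$ large, on an event of full probability. Hence the boundary $\infty$ cannot be left and, combined with its accessibility from every finite state, it is an exit boundary.
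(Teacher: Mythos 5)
Your lower bound $G_a^{+}(n)\geq(1-2^{1-a})\ell(n)$ is correct and essentially reproduces Lemma \ref{estimatesexplosion}-(ii), and your treatment of the coalescence terms with $k\leq n/2$ is fine. The genuine gap is in the tail $k>n/2$: the contribution you compute there is of order $n^{a-1}\Phi(n)/n$, not $\Phi(n)/n$, so the assertion that ``both contributions are of the same order'' is false for every $a>1$, and the extra factor $n^{a-1}$ cannot be absorbed. Worse, this is not merely an artifact of a crude bound: if $\Lambda$ puts mass $\Lambda([1-1/n,1))\asymp 1/\log n$ near $1$, the single term $k=n$ already gives $-G_a^{-}(n)\geq(n^{a-1}-1)\int_{[1-1/n,1)}x^{n-2}\Lambda(\mathrm{d}x)\gtrsim n^{a-1}/\log n$, while one can simultaneously have $\ell(n)\asymp(\log n)^{2}$, so that $\mathbb{H}$ holds and $\rho=0$. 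Then $\limsup_n(-G_a^{-}(n))/G_a^{+}(n)=\infty$ for every $a>1$ and Corollary \ref{genexplosionmaincor} is inapplicable to the full generator $\mathcal{L}$: the function $n\mapsto n^{1-a}$ is not a usable Lyapunov function when near-total coalescences occur at non-negligible rate. This is precisely the difficulty the paper flags in the introduction, and its proof circumvents it by first stopping the process at the time $\sigma_p^{(n_0)}$ of the first coalescence removing a proportion $>p$ of the blocks: for the truncated generator $\mathcal{L}^{p}$ one has the clean bound $-G_a^{c,p}(n)\leq(a-1)(1-p)^{-a}\Phi(n)/n$ of Lemma \ref{estimatesexplosion}-(i), Corollary \ref{genexplosionmaincor} applies after choosing $p$ then $a$ suitably, and one then shows via the Poisson construction that $\inf_{n}\sigma_p^{(n)}>0$ a.s., so that explosion, which occurs arbitrarily fast from large initial states, beats the first large coalescence. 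Some device of this kind is unavoidable.

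The second claim is also not established by your argument. Knowing that the chain explodes quickly from every large finite state, combined with the monotone coupling, does not give $\#\Pi^{(n)}(t)=\infty$ for \emph{all} $t>0$: after exploding, $\#\Pi^{(n)}$ may come back down, and your reasoning would at best show that $\infty$ is re-entered instantaneously after being left (regularity for itself, as in Lemma \ref{regularforitselflemma}), which is compatible with $\infty$ being a regular boundary rather than an exit. To rule out leaving $\infty$ one must show that the EFC process does not come down from infinity; the paper does this by proving $\theta_{\star}\geq\liminf_n n\ell(n)/\Phi(2n)\geq 1/(4\rho)$ and invoking Theorem \ref{cditheorem}. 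This is also where the threshold $1/4$ in the statement comes from --- your sketch gives no account of why the constant degrades from $1/2$ to $1/4$ between the two claims.
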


The proof of Lemma \ref{lemmaexplosion} is deferred. 

\noindent \textbf{Proof of Theorem \ref{suffcondpropexit}}. Under the assumption of Theorem \ref{suffcondpropexit}, $\rho=0$ and according to Lemma \ref{lemmaexplosion} the process explodes almost-surely and has $\infty$ as an exit boundary. \qed

The condition $\rho<\frac{1}{2}$ is not a very sharp condition but it holds for quite general splitting measures. We shall see later how to improve Lemma \ref{lemmaexplosion} for coagulation and splitting measures with some regular variation properties.

We now explain the main idea of the proof of Lemma \ref{lemmaexplosion}. We shall follow a similar route in order to show Theorem \ref{stablefragtheorem} and Theorem \ref{logcoal}. Fix $p\in (0,1)$ and $n_0\in \mathbb{N}$. Recall $(\#\Pi^{(n_0)}(t),t\geq 0)$ defined in Section \ref{preliminaries}, Lemma \ref{monotonicity} and the generator $\mathcal{L}$ in \eqref{generator}.  Consider the process $(\#\Pi^{(n_0)}(t),t\geq 0)$  stopped at the first coalescence time at which the number of blocks decreases of a proportion larger than $p$.
Set \begin{equation}\label{sigmap}\sigma^{(n_0)}_p:=\inf \{t\geq 0; \#\Pi^{(n_0)}(t)\leq (1-p)\#\Pi^{(n_0)}(t-)\}.
\end{equation}
The process  $(\#\Pi^{(n_0)}(t\wedge \sigma^{(n_0)}_p),t\geq 0)$, which appears also in \cite[Section 3.2]{cdiEFC}, is a Markov process with generator $\mathcal{L}^{p}$ defined by
\[\mathcal{L}^{p}g(n)=\mathcal{L}^{c,p}g(n)+\mathcal{L}^{f}g(n)\]
with \[\mathcal{L}^{c,p}g(n):=\sum_{k=2}^{\lfloor np \rfloor }\binom{n}{k}\lambda_{n,k}\big( g(n-k+1)-g(n)\big)\]
where $\lfloor np\rfloor$ denotes the greatest integer less than or equal to $np$. 

The assumptions of Lemma \ref{lemmaexplosion} will allow us to choose $p$ small enough and $a$ close enough to $1$, in order to be able to apply Corollary \ref{genexplosionmaincor} to the minimal (unstopped) Markov process with generator $\mathcal{L}^{p}$. The latter will therefore explode with a positive probability. Estimates on the jump-time $\sigma_p^{(n_0)}$, found in \cite{coaldist11} and \cite{cdiEFC}, will ensure that the process $(\#\Pi^{(n_0)}(t\wedge \sigma_p^{(n_0)}),t\geq 0)$ explodes also with positive probability for large enough $n_0$. The fact that the process can explode with positive probability when started from any initial point $n_0\in \mathbb{N}$ is a consequence of the irreducibility of the process. Lastly, the Markov property will entail that explosion happens actually almost-surely.

Call respectively $G_a^{f}$, $G_a^{c}$ and $G_a^{c,p}$ the functions $G_a^{+}$ and $G_a^{-}$ associated to the generators $\mathcal{L}$ and $\mathcal{L}^{p}$. For any $n\geq 2$ and any $p\in (0,1)$
\begin{equation}\label{Gc} G^{c}_{a}(n):=-\sum_{k=2}^{n}\binom{n}{k}\lambda_{n,k}\left[\left(1-\frac{k-1}{n}\right)^{1-a}-1\right],
\end{equation}
\begin{equation}\label{Gcp} G^{c,p}_{a}(n):=-\sum_{k=2}^{\lfloor np\rfloor}\binom{n}{k}\lambda_{n,k}\left[\left(1-\frac{k-1}{n}\right)^{1-a}-1\right].
\end{equation}

For any $n\geq 1$
\begin{equation}\label{Gf} G^{f}_{a}(n):=-n\sum_{k=1}^{\infty}\mu(k)\left[\left(1+\frac{k}{n}\right)^{1-a}-1\right].
\end{equation}
We need the following estimates.
\begin{lemma}\label{estimatesexplosion} Given $a>1$, 
\begin{itemize}
\item[(i)] for all $n\geq 2$ \begin{equation*} -G_{a}^{c,p}(n)\leq \frac{\Phi(n)}{n}(a-1)\left(1-p\right)^{-a},
\end{equation*}
\item[(ii)] for all $n\geq 1$ \begin{equation*} G_{a}^{f}(n)\geq 2^{-a}(a-1)\ell(n).
\end{equation*}
%with $c(a)=\min\left((a-1)2^{-a}; 1-2^{1-a}\right)$.
\end{itemize}
\end{lemma}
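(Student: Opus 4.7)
Both estimates rely on the integral representations
\[
(1-x)^{1-a}-1 \;=\; (a-1)\int_0^x(1-u)^{-a}\,\ddr u, \qquad 1-(1+x)^{1-a} \;=\; (a-1)\int_0^x(1+u)^{-a}\,\ddr u,
\]
valid for $a>1$, combined with the monotonicity of each integrand.

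For (i), monotonicity of $u\mapsto (1-u)^{-a}$ on $[0,x]$ immediately yields the pointwise upper bound $(1-x)^{1-a}-1 \leq (a-1)\,x\,(1-x)^{-a}$. Applying this to $x=(k-1)/n$ with $k\leq \lfloor np\rfloor$, the bound $1-(k-1)/n \geq 1-p$ lets me replace the factor $(1-x)^{-a}$ by the uniform constant $(1-p)^{-a}$. Substituting term by term into \eqref{Gcp} and harmlessly extending the range to $k=n$ (all summands being non-negative) gives
\[
-G_a^{c,p}(n) \;\leq\; \frac{(a-1)(1-p)^{-a}}{n}\sum_{k=2}^{n}(k-1)\binom{n}{k}\lambda_{n,k},
\]
and the remaining sum is exactly $\Phi(n)$ by the definition \eqref{phi}.

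For (ii), the sum in \eqref{Gf} runs over all $k\geq 1$ and $k/n$ is unbounded, so I would split the series at the threshold $k=n$. For $k\leq n$, the same integral trick (now with $(1+u)^{-a}$) combined with $(1+k/n)^{-a}\geq 2^{-a}$ produces the per-term lower bound $n\mu(k)\bigl[1-(1+k/n)^{1-a}\bigr]\geq (a-1)2^{-a}\,k\mu(k)$. For $k\geq n+1$, this estimate deteriorates, so I would instead use that $x\mapsto 1-(1+x)^{1-a}$ is non-decreasing and bound the bracket from below by its value at $x=1$, namely $1-2^{1-a}$. Summing over $k\geq n+1$ yields the contribution $(1-2^{1-a})\,n\bar{\mu}(n+1)$.

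The main obstacle is reconciling the two regimes with a single constant so as to recognize $\ell(n)=\sum_{k\leq n}k\mu(k)+n\bar{\mu}(n+1)$. This reduces to the elementary inequality $1-2^{1-a}\geq (a-1)2^{-a}$, equivalently $2^a\geq a+1$, which holds for every $a\geq 1$ by convexity of $x\mapsto 2^x$ (with equality at $a=1$). Once this step is verified, the two pieces combine to give $G_a^{f}(n)\geq (a-1)2^{-a}\,\ell(n)$, which is (ii). The split at precisely $k=n$ is forced by this constant-matching requirement; splitting at a different threshold would leave an inhomogeneous pair of constants that could not be factored through $\ell(n)$.
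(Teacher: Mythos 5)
Your proof is correct and follows essentially the same route as the paper: the mean-value/integral bound $(1-x)^{1-a}-1\leq (a-1)x(1-x)^{-a}$ with the uniform constant $(1-p)^{-a}$ for (i), and for (ii) the split of the series at $k=n$, the bound $1-(1+x)^{1-a}\geq (a-1)2^{-a}x$ on $[0,1]$, the value $1-2^{1-a}$ on the tail, and the recombination via $\ell(n)=\sum_{k\leq n}k\mu(k)+n\bar\mu(n+1)$. Your explicit verification of the constant-matching inequality $1-2^{1-a}\geq (a-1)2^{-a}$ (i.e. $2^a\geq a+1$) is a step the paper leaves implicit.
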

\begin{proof}
For any $a>1$ and any $x\in (0,1)$, by the mean-value theorem
\[(1-x)^{1-a}-1\leq (a-1)(1-x)^{-a} x.\]
Hence, for all $n\geq 2$
\begin{align*}
-G_{a}^{c,p}(n)&=\sum_{k=2}^{\lfloor np\rfloor}\binom{n}{k}\lambda_{n,k}\left[\left(1-\frac{k-1}{n}\right)^{1-a}-1\right]\\
&\leq (a-1) \left(1-\frac{\lfloor np \rfloor }{n}\right)^{-a}\sum_{k=2}^{\lfloor np \rfloor }\frac{k-1}{n} \lambda_{n,k} \binom{n}{k}\\
&\leq (a-1) \left(1-\frac{\lfloor np \rfloor }{n}\right)^{-a}\frac{\Phi(n)}{n}\leq (a-1) \left(1-p\right)^{-a}\frac{\Phi(n)}{n}.
\end{align*}
Recall now $G_a^{f}(n)$. A simple study of the function $g(x):=1-(1+x)^{1-a}-cx$ shows that if $c=(a-1)2^{-a}$ then  $1-(1+x)^{1-a}\geq cx$ for all $x\leq 1$. For all $n$,
\begin{align*}
G_a^{f}(n)&=n\sum_{k=1}^{\infty}\mu(k)\left[1-\left(1+\frac{k}{n}\right)^{1-a}\right]\\
&=n\sum_{k=1}^{n}\mu(k)\left[1-\left(1+\frac{k}{n}\right)^{1-a}\right]+n\sum_{k=n+1}^{\infty}\mu(k)\left[1-\left(1+\frac{k}{n}\right)^{1-a}\right]\\
&\geq (a-1)2^{-a}\sum_{k=1}^{n}k\mu(k)+(1-2^{1-a})n\bar{\mu}(n+1)\\
&\geq (a-1)2^{-a}\left(\sum_{k=1}^{n}k\mu(k)+n\bar{\mu}(n+1)\right)=(a-1)2^{-a}\ell(n)
\end{align*}
where we have used \eqref{ellformula} in the last equality. \qed
\end{proof}
We now deal with the proof of Lemma \ref{lemmaexplosion}.

\textbf{Proof of Lemma \ref{lemmaexplosion}}. Denote by $(N_t^{(p)},t\geq 0)$ the minimal Markov process with generator $\mathcal{L}^{p}$. We first establish that the process $(N_t^{(p)},t\geq 0)$ explodes with positive probability by applying Corollary \ref{genexplosionmaincor}. Let $c(a)=(a-1)2^{-a}$. By assumption $\mathbb{H}$, there is $n_0$ such that for all $n\geq n_0$,  $\ell(n)\geq  g(\log n)\log n$. By Lemma \ref{estimatesexplosion}-(ii), $G_a^{f}(n)\geq c(a)g(\log n)\log n $,
%For any $r'\in (1,r)$, there is $n_1\geq n_0$, such that for all $n\geq n_1$
%\[\frac{G_a^{f}(n)}{(\log n)^{r'}}\geq c(a)(\log n ) ^{r-r'}\geq 1.\]
so that, the first assumption of Corollary \ref{genexplosionmaincor} holds. Recall now our assumption
$\rho:=\underset{n\rightarrow \infty}{\limsup}\frac{\Phi(n)}{n\ell(n)}<\frac{1}{2}$. Choose $p$ small enough such that $\frac{1}{1-p}\rho<\frac{1}{2}$. Then by Lemma \ref{estimatesexplosion}, for all $n$ and all $a>1$
\[\frac{-G_a^{c,p}(n)}{G_a^{f}(n)}\leq 2^a(1-p)^{-a}\frac{\Phi(n)}{n\ell(n)},\]
%One can check that if $a$ is close to $1$, $c(a)= (a-1)2^{-a}$
and thus, \begin{equation}\label{upperbound} \underset{n\rightarrow \infty}{\limsup} \frac{-G_a^{c,p}(n)}{G_a^{f}(n)} \leq 2^{a}\frac{\rho}{(1-p)^{a}}=:\gamma_a.
\end{equation}
By the assumption $\rho<\frac{1}{2}$, take  constant $a$ close enough to $1$ such that the upper bound $\gamma_a$ in \eqref{upperbound} above  is strictly smaller than $1$. Finally, Corollary \ref{genexplosionmaincor} applies and the process $(N_t^{(p)},t\geq 0)$ explodes with positive probability when starting from a large enough initial value. More precisely, if one denotes by $\tau_\infty^{+, p}$ the explosion time of $(N_t^{(p)},t\geq 0)$ and set $c=1-\gamma_a\in (0,\infty)$, then by applying the estimate \eqref{lowerboundexplosion}, we get that if $n_0$ is large enough,
\[\mathbb{P}_{n_0}\left(\tau_\infty^{+, p}\leq \int_{\frac{1}{1+\delta}\log n_0^{1-\delta}}^{\infty}\frac{\ddr v}{cvg(v)}\right)\geq \prod_{k=0}^{\infty}h_{a}(k,n_0)>0,\]
with $h_{a}(k,n_0):=1-(1+2^{a-1})\left(\frac{1}{n_{0}^{\delta(a-1)}}\right)^{(1+\delta)^{k}}$\!\!. Simple calculations show that $\prod_{k=0}^{\infty}h_{a}(k,n_0)$ converges to $1$ as $n_0$ goes to $\infty$. Since $\int_{\frac{1}{1+\delta}\log n_0^{1-\delta}}^{\infty}\frac{\ddr v}{cvg(v)}\underset{n_0\rightarrow \infty}{\longrightarrow} 0$, for all $t>0$
\begin{equation}\label{stoppedgoesto1}\mathbb{P}_{n_0}\left(\tau_\infty^{+, p}\leq t\right)\underset{n_0\rightarrow \infty}{\longrightarrow} 1.
\end{equation}

Recall $\sigma_p^{(n_0)}$ defined in \eqref{sigmap}. We now show that $\mathbb{P}_{n_0}(\tau_\infty^{+}<\sigma_p^{(n_0)})>0$ for large enough $n_0$, where $\tau_\infty^{+}$ denotes the first explosion time of $(\#\Pi^{(n_0)}(t),t\geq 0)$. Recall the Poisson construction of $(\Pi^{(n_0)}(t),t\geq 0)$ in Section \ref{EFCbasics}. For each atom $(t,\pi^{c})$ of $\mathrm{PPP}_C$, we associate the i.i.d Bernoulli random variables $(X_i,i\geq 1)$ defined by $X_i=1$, if $\{i\} \notin \pi^{c}$, that is to say if the $i^{\mathrm{th}}$ block of $\Pi^{(n_0)}(t-)$  takes part to the merging at time $t$,  and $X_i=0$ if $\{i\} \in \pi^c$, which means that the $i^{\mathrm{th}}$ block of $\Pi^{(n_0)}(t-)$ does not take part to the merging at time $t$. By definition, the jump time $\sigma_p^{(n_0)}$ belongs to the set of atoms of coalescence  \[J_p:=\left\{(t,\pi^{c}) \text{ atom of }\mathrm{PPP}_C;\ \exists n\geq 2; \sum_{k=1}^{n}X_k\geq np\right\}.\] Applying \cite[Lemma 3.14]{cdiEFC}, we see from the calculations in the proof of \cite[Lemma 3.15]{cdiEFC} that $\mathbb{E}(\mathrm{PPP}_C(J_p))<\infty$, hence $J_p$ is locally finite. Moreover $\Pi^{(n_0)}(0)=\Pi^{(n_0)}(0+)$ for any $n_0\geq 1$, therefore $0$ is neither a coalescence time nor an accumulation point of $J_p$ and we have $\inf\{t>0; (t,\pi^c)\in J_p\}>0$ a.s. This ensures that $\underset{n\geq 2}\inf \sigma_p^{(n)}>0$ a.s. Note that
%and therefore, 
%by \eqref{stoppedgoesto1}
%\[\mathbb{P}_{n_0}(\tau_\infty^{+}<\sigma_p^{(n_0)})=\mathbb{P}_{n_0}(\tau_\infty^{+,p}<\sigma_p^{(n_0)})\geq \mathbb{P}_{n_0}(\tau_\infty^{+,p}<\underset{n\geq 2}{\inf} \sigma_p^{(n)}) \underset{n_0\rightarrow \infty}{\longrightarrow} 1.\]
%by \eqref{stoppedgoesto1}
\[\mathbb{P}_{n_0}(\tau_\infty^{+}<\sigma_p^{(n_0)})=\mathbb{P}_{n_0}(\tau_\infty^{+,p}<\sigma_p^{(n_0)})\geq \mathbb{P}_{n_0}(\tau_\infty^{+,p}<\underset{n\geq 2}{\inf} \sigma_p^{(n)}).\]
For any $t>0$, one has
\begin{align*}
\mathbb{P}_{n_0}(\tau_\infty^{+}<\sigma_p^{(n_0)})
%&\geq \mathbb{P}_{n_0}(\tau_\infty^{+}<\underset{n\geq 2}{\inf} \sigma_p^{(n)},\underset{n\geq 2}{\inf} \sigma_p^{(n)}>t)\\
&\geq \mathbb{P}_{n_0}(\tau_\infty^{+}\leq t,\underset{n\geq 2}{\inf} \sigma_p^{(n)}>t)\\
&= \mathbb{P}_{n_0}(\tau_\infty^{+}\leq t)+\mathbb{P}(\underset{n\geq 2}{\inf} \sigma_p^{(n)}>t)-\mathbb{P}_{n_0}(\{\tau_\infty^{+}<t\}\cup\{\underset{n\geq 2}{\inf} \sigma_p^{(n)}>t\})\\
&\geq \mathbb{P}_{n_0}(\tau_\infty^{+}\leq t)+\mathbb{P}(\underset{n\geq 2}{\inf} \sigma_p^{(n)}>t)-1.
\end{align*}
By letting $n_0$ to $\infty$ and applying \eqref{stoppedgoesto1}, we get
\[\underset{n_0\rightarrow \infty}{\lim}\mathbb{P}_{n_0}(\tau_\infty^{+}<\sigma_p^{(n_0)})\geq \mathbb{P}(\underset{n\geq 2}{\inf} \sigma_p^{(n)}>t).\]
Recall that $\underset{n\geq 2}{\inf} \sigma_p^{(n)}>0$ a.s. By letting $t$ towards $0$, we have $\underset{n_0\rightarrow \infty}{\lim}\mathbb{P}_{n_0}(\tau_\infty^{+}<\sigma_p^{(n_0)})=1$. Finally, we see that the stopped process $(\#\Pi(t\wedge \sigma_p^{(n_0)}),t\geq 0)$ explodes with a positive probability under $\mathbb{P}_{n_0}$ for large enough $n_0$. Hence $(\#\Pi(t),t\geq 0)$ started from $\#\Pi(0)=n_0$, has also a positive probability to explode for large enough $n_0$. Since  $(\#\Pi(t\wedge \tau_\infty^{+}),t\geq 0)$ is irreducible in  $\mathbb{N}$, its probability of explosion starting from $n=1$ is also positive, namely $\mathbb{P}_1(\tau_\infty^{+}<\infty)>0$. We now establish that the process $(\#\Pi(t),t\geq 0)$ started from any integer $n_0$, explodes almost-surely. Pick $t>0$ such that $\mathbb{P}_1(\tau_\infty^{+}\leq t)>0$. The stochastic monotonicity in the initial states, see Lemma \ref{monotonicity}, ensures that for any $n_0\geq 1$, $\mathbb{P}_{n_0}(\tau_\infty^{+}>t)\leq \mathbb{P}_1(\tau_\infty^{+}>t)>0$. Let $n\geq 2$, by the Markov property at time $(n-1)t$, we have
\begin{align*}\mathbb{P}_{n_0}(\tau_\infty^{+}>nt)&=\mathbb{P}_{n_0}\left(\tau_\infty^{+}>(n-1)t\right)\mathbb{E}\left(\mathbb{P}_{\#\Pi((n-1)t)}(\tau_\infty^{+}>t)\right)\\
&\leq \mathbb{P}_{n_0}\left(\tau_\infty^{+}>(n-1)t\right)\mathbb{P}_{1}\left(\tau_\infty^{+}>t\right).
\end{align*}
By induction, $$\mathbb{P}_{n_0}(\tau_\infty^{+}>nt)\leq  \mathbb{P}_{1}(\tau_\infty^{+}>t)^{n}\underset{n\rightarrow \infty}{\longrightarrow} 0.$$
Therefore, $\mathbb{P}_{n_0}(\tau^{+}_\infty<\infty)=1$.
%Call $E$ the event of explosion. Note that, the stochastic monotonicity, see Lemma \ref{monotonicity}, ensures that for any $n\geq 1$, $\mathbb{P}_{n}(E)\geq \mathbb{P}_1(E)>0$. Set $\sigma^{1}_p:=\sigma^{(n)}_p$. For all $k\geq 2$, denote by \[\sigma^{k}_p:=\inf \{t>\sigma^{k-1}_p; \#\Pi(t)\leq (1-p)\#\Pi(t-)\}\] and set $(N^{k,(p)}_t,t\geq 0)$ the process such that $N^{k,(p)}_t=\#\Pi((t+\sigma^{k-1}_p)\wedge \sigma^{k}_p)$ for all $t\geq 0$. We apply the Markov property at the stopping times $\sigma^{k}_p$. Namely, set $E_{k}:=\{(N_t^{k,(p)},t\geq 0) \text{ explodes}\}$. For any $k\geq 2$, by the strong Markov property at time $\sigma^{k-1}_p$
%\begin{align*}
%\mathbb{P}_n(E_1^{c}\cap\cdots \cap E_k^{c})&=\mathbb{E}_n\left[\mathbbm{1}_{E_1^{c}\cap \cdots \cap E_{k-1}^{c}} \mathbb{P}(E_k^{c}|\mathcal{F}_{\sigma_{p}^{k-1}})\right]\leq \mathbb{P}_n(E^{c}_1\cap\cdots \cap E_{k-1}^{c})\mathbb{P}_1(E_k^{c}).
%\end{align*}
%By induction, we get for any $k\geq 1$, $\mathbb{P}_{n}(E_1^{c}\cap \cdots \cap E_k^{c})\leq \mathbb{P}_1(E^{c}_1)^{k}$. Since $\mathbb{P}_1(E^{c}_1)<1$ then \[\mathbb{P}_n(\underset{k\geq 1}{\cap}(E^{k})^{c})=0\]
%which achieves the proof of the almost-sure explosion.

It remains to justify that $\infty$ is an exit when $\rho<\frac{1}{4}$. Notice that if the pure coalescent part does not come down from infinity, i.e when $\sum_{n\geq 2}\frac{1}{\Phi(n)}=\infty$, then $\infty$ is necessarily an exit. We treat now the case $\sum_{n\geq 2}\frac{1}{\Phi(n)}<\infty$. Recall $\theta_{\star}$. By \cite[Lemma 4.1 and Remark 4.2]{cdiEFC}, we get
\[\theta_{\star}\geq \underset{n\rightarrow \infty}{\liminf} \frac{n\ell(n)}{\Phi(2n)}\geq  \underset{n\rightarrow \infty}{\liminf} \frac{n\ell(n)}{4\Phi(n)}\geq \frac{1}{4\rho}.\]

Finally, if $\rho<\frac{1}{4}$, then $\theta_{\star}>1$ and by Theorem  \ref{cditheorem}, $(\Pi(t),t\geq 0)$ does not come down from infinity.
\qed

%\end{proof}
We stress here on some regularity of the boundary $\infty$ when it is accessible (i.e when the process explodes). For any $n_0\in \mathbb{N}$, set $\tau^{+,(n_0)}_{\infty}:=\inf\{t; \#\Pi^{(n_0)}(t-)=\infty\}$.
\begin{lemma}\label{tregular} If the map $\ell$ satisfies condition $\mathbb{H}$ and $\rho<\frac{1}{2}$, then  the boundary $\infty$ is an instantaneous exit, namely $\tau^{+,(n_0)}_{\infty}\underset{n_0\rightarrow \infty}{\longrightarrow} 0$ a.s.
\end{lemma}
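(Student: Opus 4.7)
The plan is to upgrade the ``positive probability of early explosion'' argument already used in the proof of Lemma \ref{lemmaexplosion} into a ``probability going to $1$'' statement, and then to conclude the almost-sure statement using the monotonicity of the coupling.

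First, observe that by Lemma \ref{monotonicity}, the sequence $(\#\Pi^{(n_0)}(t),t\geq 0)_{n_0\geq 1}$ is non-decreasing in $n_0$, so the first explosion times satisfy $\tau^{+,(n_0+1)}_{\infty}\leq \tau^{+,(n_0)}_{\infty}$ a.s. In particular, $\tau^{+,(n_0)}_{\infty}$ converges almost surely, as $n_0\to\infty$, to some nonnegative limit $\tau^{+,\infty}_{\infty}$. It therefore suffices to show that $\tau^{+,(n_0)}_{\infty}\to 0$ in probability.

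Next, I would re-examine the quantitative part of the proof of Lemma \ref{lemmaexplosion}. With $p\in(0,1)$ chosen so that $\gamma_a<1$, and the corresponding $c:=1-\gamma_a$, the explosion estimate \eqref{lowerboundexplosion} from Lemma \ref{keylemma} applied to the minimal Markov process $(N_t^{(p)},t\geq 0)$ with generator $\mathcal{L}^{p}$ gives, for the sequence
\[
t_{n_0}:=\int_{\frac{1}{1+\delta}\log n_0^{1-\delta}}^{\infty}\frac{\ddr v}{cvg(v)},
\]
(which decreases to $0$ as $n_0\to\infty$ by condition $\mathbb{H}$), the lower bound
\[
\mathbb{P}_{n_0}\!\left(\tau_\infty^{+,p}\leq t_{n_0}\right)\ \geq\ \prod_{k=0}^{\infty}h_a(k,n_0)\ \underset{n_0\rightarrow\infty}{\longrightarrow}\ 1.
\]
This is precisely the statement \eqref{stoppedgoesto1} but now with the $t$ allowed to shrink to $0$ with $n_0$.

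Now, as already used in the proof of Lemma \ref{lemmaexplosion}, the set $J_p$ of atoms of $\mathrm{PPP}_C$ that can produce a jump of size at least $p$ is locally finite, whence $\underset{n\geq 2}{\inf}\sigma_p^{(n)}>0$ almost surely. Consequently $\mathbb{P}\bigl(\sigma_p^{(n_0)}>t_{n_0}\bigr)\geq \mathbb{P}\bigl(\underset{n\geq 2}{\inf}\sigma_p^{(n)}>t_{n_0}\bigr)\to 1$ as $n_0\to\infty$, since $t_{n_0}\to 0$. On the event $\{\tau_\infty^{+,p}\leq t_{n_0}\}\cap\{\sigma_p^{(n_0)}>t_{n_0}\}$, the minimal process $N^{(p)}$ and the stopped process $(\#\Pi^{(n_0)}(t\wedge\sigma_p^{(n_0)}),t\geq 0)$ (which agree until $\sigma_p^{(n_0)}$) both explode before $\sigma_p^{(n_0)}$, and this forces $\tau_\infty^{+,(n_0)}=\tau_\infty^{+,p}\leq t_{n_0}$. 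Combining the two probability estimates through an elementary $\mathbb{P}(A\cap B)\geq \mathbb{P}(A)+\mathbb{P}(B)-1$ argument (exactly as in the end of the proof of Lemma \ref{lemmaexplosion}), I obtain
\[
\mathbb{P}_{n_0}\!\left(\tau_\infty^{+,(n_0)}\leq t_{n_0}\right)\ \underset{n_0\rightarrow\infty}{\longrightarrow}\ 1,
\]
i.e.\ convergence in probability of $\tau_\infty^{+,(n_0)}$ to $0$. Combined with the monotone a.s.\ convergence from the first paragraph, this forces $\tau^{+,\infty}_\infty=0$ a.s., concluding the proof.

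The only slightly delicate point in carrying this out is keeping track of the fact that the quantitative lower bound on the explosion probability in Lemma \ref{keylemma} is stated with $t$ depending on $n_0$; once this is recognised, no new estimate is needed beyond those already developed, and the argument reduces to repackaging the ingredients of the proof of Lemma \ref{lemmaexplosion}.
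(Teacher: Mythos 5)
Your proposal is correct and follows essentially the same route as the paper's proof: apply the quantitative explosion estimate \eqref{lowerboundexplosion} along the shrinking times $\varphi(n_0)\to 0$, use that $\inf_{n}\sigma_p^{(n)}>0$ a.s.\ so the stopped and unstopped processes agree on an event of probability tending to $1$, and combine via $\mathbb{P}(A\cap B)\geq\mathbb{P}(A)+\mathbb{P}(B)-1$. Your explicit use of the monotone coupling from Lemma \ref{monotonicity} to upgrade convergence in probability to almost-sure convergence is a welcome clarification of a step the paper leaves implicit.
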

\begin{proof}
Recall $\tau_\infty^{+,p}$ the first explosion time of the unstopped process $(N_t^{(p)},t\geq 0)$ with generator $\mathcal{L}^{p}$. Lemma \ref{keylemma}, and the  estimates 
\eqref{lowerboundexplosion} with $\delta<1$. Set $\varphi(n_0)=\int_{\frac{1}{1+\delta}\log n_0^{1-\delta}}^{\infty}\frac{\ddr v}{cvg(v)}$. One has $\varphi(n_0) \underset{n_0\rightarrow \infty}{\longrightarrow} 0$ and one easily checks from \eqref{lowerboundexplosion} that $\mathbb{P}(\tau_\infty^{+,p}\leq \varphi(n_0)) \underset{n_0\rightarrow \infty}{\longrightarrow} 1$. Recall $\sigma_p:=\underset{n\geq 1}{\inf} \sigma_{p}^{n}>0$ a.s. Fix $t>0$, and take $n_0$  large enough such that $\varphi(n_0)\leq t$. Then
\begin{align*}
\mathbb{P}_{n_0}(\tau_\infty^{+}\leq t)&\geq \mathbb{P}_{n_0}(\tau_\infty^{+}\leq \varphi(n_0), \varphi(n_0)<\sigma_p)\\
&=\mathbb{P}_{n_0}(\tau_\infty^{+,p}\leq \varphi(n_0) , \varphi(n_0)<\sigma_p))\\
&\geq \mathbb{P}_{n_0}(\tau_\infty^{+,p}\leq \varphi(n_0))+\mathbb{P}(\varphi(n_0)<\sigma_p)-1.
\end{align*}
Since $\sigma_p>0$ a.s,  $\mathbb{P}(\varphi(n_0)<\sigma_p)\underset{n_0\rightarrow \infty}{\longrightarrow} 1$ and
%Hence by \eqref{stoppedgoesto1}, we see that for any $t<t_p$, $\mathbb{P}_{n_0}(\tau_\infty^{+}\leq t)\underset{n_0\rightarrow \infty}{\longrightarrow} 1$, and clearly for any $t\geq t_p$, letting $t'$ be such that $t'<t_p$, we get
$\mathbb{P}_{n_0}(\tau_\infty^{+}\leq t)\underset{n_0\rightarrow \infty}{\longrightarrow} 1$ for any fixed $t>0$.\qed
\end{proof}

%\begin{remark} The process $(N_t^{(p)},t\geq 0)$ already appears in \cite{cdiEFC} for showing that $\theta_{\star}>1$ entails that the EFC process stays infinite.
%\end{remark}
\subsection{Sufficient conditions for non-explosion}\label{nonexplosec}\

We first establish Theorem \ref{suffcondpropentrance} and then design, in a similar way as what has been done for the explosion, some sufficient conditions for non-explosion using the Lyapunov function $g_a(n)=n^{1-a}$ with $a<1$.\\

%\subsubsection
\textbf{Proof of Theorem \ref{suffcondpropentrance}}.
%We shall use the following Lyapunov's condition for non-explosion: if there exists a non-decreasing function $f$ such that $f(n)\underset{n\rightarrow \infty}{\longrightarrow} \infty$ and $\mathcal{L}f(n)\leq cf(n)$ for all $n$, and some constant $c>0$, then the process does not explode. We refer the reader for instance to \cite[]{Spiekma}.
Recall the statement of Theorem \ref{suffcondpropentrance}.
%and its assumption $\sum_{n\geq 2}\frac{n}{\Phi(n)}\bar{\mu}(n)<\infty$. 
Recall also that the sequence $(\Phi(n)/n, n\geq 2)$ is non-decreasing, see Section \ref{CDIcoal}. Assume first that $\underset{n\rightarrow \infty}{\lim} \frac{n}{\Phi(n)}>0$. By assumption $\sum_{n\geq 2}\frac{n}{\Phi(n)}\bar{\mu}(n)<\infty$ and therefore $\sum_{n=2}^{\infty}\bar{\mu}(n)<\infty$. In this case, the process $(\#\Pi(t),t\geq 0)$ is clearly non-explosive since it stays below a branching process whose offspring measure $\mu$ has finite mean. 

We now treat the case for which $\underset{n\rightarrow \infty}{\lim} \frac{n}{\Phi(n)}=0$. 
%{\color{green}Observe that} that $k\mapsto \frac{k}{\Phi(k)}$ is non-increasing. Recall also $\Phi(n)\underset{n\rightarrow \infty}{\sim} \Psi(n)$. One has
%\begin{equation*}
%\frac{n\bar{\mu}(k)}{\Phi(n+k)}\leq \frac{n+k}{\Phi(n+k)}\frac{n}{n+k}\bar{\mu}(k)\leq \frac{k}{\Phi(k)}\bar{\mu}(k).
%\end{equation*}
%Since for any $k\geq 2$, $\underset{n\rightarrow \infty}\lim \frac{n\bar{\mu}(k)}{\Phi(k+n)}=0$ then
%$\theta=0$. It remains to show that the boundary $\infty$ is inaccessible.
Set $f(n):=\sum_{k=2}^{n}\frac{k}{\Phi(k)}$ for all $n\geq 2$ and $f(1)=\frac{2}{\Phi(2)}$. Since $\Phi(k)\leq \frac{\Lambda([0,1])}{2}k^2$ for all $k\geq 2$, see Section \ref{CDIcoal}, 
%Since $\frac{k}{\Phi(k)} \underset{k\rightarrow \infty}{\sim} \frac{k}{\Psi(k)}$ and {\color{green}??} $\int^{\infty}\frac{x}{\Psi(x)}\ddr x=\infty$, then 
one has $\frac{k}{\Phi(k)}\geq \frac{2}{\Lambda([0,1])k}$ for all $k\geq 2$ and then
$f(n)\underset{n\rightarrow \infty}{\longrightarrow} \infty$. For any $n\geq 2$,
\begin{align}\label{Lf}
\mathcal{L}^{f}f(n)&=n\sum_{k=2}^{\infty}\mu(k)\sum_{j=n+1}^{n+k}\frac{j}{\Phi(j)}=n\sum_{j=n+1}^{\infty}\sum_{k=j-n}^{\infty}\frac{j}{\Phi(j)}\mu(k)=n\sum_{i=1}^{\infty}\frac{i+n}{\Phi(i+n)}\bar{\mu}(i).
\end{align}
For the coalescent part, since $\frac{n}{\Phi(n)}\leq \frac{j}{\Phi(j)}$ for $j\leq n$
\begin{align}\label{Lc}
\mathcal{L}^{c}f(n)
%\sum_{k=2}^{n}\binom{n}{k}\lambda_{n,k}(f(n-k+1)-f(n))
&=-\sum_{k=2}^{n}\binom{n}{k}\lambda_{n,k}\sum_{j=n-k+2}^{n}\frac{j}{\Phi(j)}\leq -\frac{n}{\Phi(n)}\sum_{k=2}^{n}\binom{n}{k}\lambda_{n,k}(k-1)=-n.
\end{align}
We now check that there exists $c$ such that $\mathcal{L}f(n)\leq cf(n)$ for all $n\geq 1$. By combining \eqref{Lf} and \eqref{Lc} one gets for any $n\geq 2$  $$\mathcal{L}f(n)\leq n\left(-1+\sum_{i=1}^{\infty}\frac{i+n}{\Phi(i+n)}\bar{\mu}(i)\right).$$
Recall the assumptions $\sum_{i\geq 2}\frac{i}{\Phi(i)}\bar{\mu}(i)<\infty$ and $\underset{n\rightarrow \infty}{\lim}\frac{n}{\Phi(n)}=0$.  Since $\frac{i+n}{\Phi(i+n)}\leq \frac{i}{\Phi(i)}$ for any $n,i\geq 2$, by Lebesgue's theorem, $\sum_{i=1}^{\infty}\frac{i+n}{\Phi(i+n)}\bar{\mu}(i)\underset{n\rightarrow \infty}{\longrightarrow} 0$. Therefore, there exists $n_{0}$, such that $\mathcal{L}f(n)\leq 0$ for all $n\geq n_{0}$. This entails that for all $n\geq 1$, $\mathcal{L}f(n)\leq c_0$
with $c_0:=\underset{k\in [|1,n_{0}|]}{\max}(|\mathcal{L}f(k)|)$. By setting $c=\frac{\Phi(2)}{2}c_0$, since $f(n)\geq \frac{2}{\Phi(2)}$ for all $n\geq 1$, we get finally $\mathcal{L}f(n)\leq cf(n)$ for all $n\geq 1$. As recalled in Section \ref{lyapunovcond}, this entails that the process does not explode.

According to \cite[Corollary 4-(2)]{cdiEFC}, if $\sum_{n=1}^{\infty}\frac{1}{\Phi(n)}<\infty$ and $\sum_{n=1}^{\infty}\frac{n}{\Phi(n)}\bar{\mu}(n)<\infty$, then $\theta=0$ and by Theorem \ref{cditheorem} the process comes down from infinity. We conclude that $\infty$ is an entrance.\qed

%\subsubsection{Other sufficient conditions for non-explosion}\

We establish now sufficient conditions for $\infty$ to be inaccessible, based on Proposition \ref{genmaincornonexplosion}. We assume in this section that $\sum_{k=n+1}^{\infty}k^{1-a}\mu(k)<\infty$. for some $a<1$. This ensures that $G_a^{f}$ in \eqref{Gf} is well-defined. We first find some estimates of the functions $G_a^{f}$ and $G_a^{c}$.
%In a similar way as in the previous section, we will now design some sufficient conditions for non-explosion using the Lyapunov function $g_a(n)=n^{1-a}$ with $a<1$. We first find some estimates of the functions $G_a^{f}$ and $G_a^{c}$.
%.. and  will derive  Proposition \ref{suffcondpropentrance1} from Corollary \ref{genexplosionmaincor}
Recall that for all $a<1$ and all $n\geq 1$ $G_{a}^{c}(n)\geq 0$ and $G_{a}^{f}(n)\leq 0$ for all $n\geq 1$.
%The estimates however are not easy to handle.
\begin{lemma}\label{estimateGaentrance} Given $a<1$ such that $\sum_{k=n+1}^{\infty}k^{1-a}\mu(k)<\infty$,
\begin{itemize}
\item[(1)] for all $n\geq 2$, \[G_a^{c}(n)\geq (1-a)\frac{\Phi(n)}{n},\]
\item[(2)] for all $n\geq 1$, \[-G_a^{f}(n)\leq (1-a)\sum_{k=1}^{n}k\mu(k)+n^{a}\sum_{k=n+1}^{\infty}k^{1-a}\mu(k).\]
\end{itemize}
%\[(1-a)\sum_{k=1}^{n}k\mu(k)+\frac{n^{a-1}}{2^{a}}\sum_{k=n+1}^{\infty}k^{1-a}\mu(k) \leq -G_a^{f}(n)\leq (1-a)\sum_{k=1}^{n}k\mu(k)+n^{a-1}\sum_{k=n+1}^{\infty}k^{1-a}\mu(k)\]
%and
%\[(1-a)\frac{\Phi(n)}{n}\leq G_a^{c}(n)\leq n^{a-1}\frac{\Phi(n)}{n}.\]
\end{lemma}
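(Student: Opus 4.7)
The plan for both parts is to exploit the concavity of the map $x\mapsto (1\pm x)^{1-a}$, which holds because $1-a\in(0,1)$ (the natural Lyapunov range in Proposition \ref{genmaincornonexplosion}). Concavity yields a linear tangent bound at $0$, and for the tail estimate in Part (2) also an elementary sub-additive bound. Both inequalities are classical, so the proof reduces to a careful termwise estimate of the sums defining $G_a^c$ and $G_a^f$.

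For Part (1) I would apply the tangent bound $(1-x)^{1-a}\leq 1-(1-a)x$, valid for $x\in[0,1]$ by concavity of $x\mapsto(1-x)^{1-a}$, with $x=(k-1)/n$ inside the definition \eqref{Gc} of $G_a^{c}$. Rearranging gives the bracket estimate $1-\bigl(1-\tfrac{k-1}{n}\bigr)^{1-a}\geq (1-a)\tfrac{k-1}{n}$, and summing against $\binom{n}{k}\lambda_{n,k}$ produces exactly $(1-a)\Phi(n)/n$ by the definition \eqref{phi} of $\Phi$.

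For Part (2) I would start from $-G_a^{f}(n)=n\sum_{k\geq 1}\mu(k)\bigl[(1+k/n)^{1-a}-1\bigr]$ and split the sum at $k=n$. On the head $1\leq k\leq n$, the tangent bound $(1+x)^{1-a}\leq 1+(1-a)x$ (valid on $x\geq 0$ by concavity of $x\mapsto(1+x)^{1-a}$) with $x=k/n$ bounds the corresponding terms by $(1-a)k\mu(k)$, giving the first piece $(1-a)\sum_{k=1}^{n}k\mu(k)$. On the tail $k\geq n+1$ the tangent bound is too loose, since it would force summability of $k\mu(k)$; I would instead use the sub-additive inequality $(1+x)^{1-a}\leq 1+x^{1-a}$, valid for $x\geq 0$ when $\alpha:=1-a\in(0,1)$. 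This bound follows from the observation that $h(x):=1+x^{\alpha}-(1+x)^{\alpha}$ satisfies $h(0)=0$ and $h'(x)=\alpha[x^{\alpha-1}-(1+x)^{\alpha-1}]\geq 0$ for $\alpha\in(0,1)$, since $x^{\alpha-1}$ is decreasing in $x$. Applied with $x=k/n$, each tail term is bounded by $n\mu(k)(k/n)^{1-a}=n^{a}k^{1-a}\mu(k)$, and summation yields the second piece of the claim.

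No real obstacle arises: the argument is essentially two convexity inequalities plus bookkeeping. The only point requiring thought is the choice of the cutoff $k=n$ in Part (2), which is picked precisely so that the two bounds match in size at $k\asymp n$ and so that one avoids any extraneous global moment assumption on $\mu$ beyond the one, $\sum_{k>n}k^{1-a}\mu(k)<\infty$, already present in the hypothesis of the lemma.
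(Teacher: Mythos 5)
Your proof is correct and follows essentially the same route as the paper: the tangent-line (concavity) bound $1-(1\mp x)^{1-a}\gtrless (1-a)(\pm x)$ for part (1) and the head of the sum in part (2), and the sub-additivity $(1+x)^{1-a}\leq 1+x^{1-a}$ on the tail $k\geq n+1$, with the split at $k=n$ exactly as in the paper. Your explicit verification of the sub-additive inequality via the monotonicity of $h(x)=1+x^{\alpha}-(1+x)^{\alpha}$ is a minor elaboration of a step the paper states without proof.
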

\begin{proof} Set $g(x)=1-(1-x)^{1-a}-(1-a)x$ for all $x\in (0,1)$. A simple study of the function $g$ yields that $g(x)\geq 0$ and thus $1-(1-x)^{1-a}\geq (1-a)x$ for all $x\in(0,1)$.
Hence, by definition of $\Phi$, for all $n\geq 2$
\[G_a^{c}(n)=\sum_{k=2}^{n}\binom{n}{k}\lambda_{n,k}\left[1-\left(1-\frac{k-1}{n}\right)^{1-a}\right]\geq (1-a)\frac{\Phi(n)}{n}.\]
Recall $G_a^{f}$ in \eqref{Gf}. Note that for any $x\in (0,1)$, $(1+x)^{1-a}-1\leq (1-a)x$
and for any $x\in (1,\infty)$, $(1+x)^{1-a}-1\leq x^{1-a}$. We get for all $n\geq 2$
\begin{align*}
-G_a^f(n)&=n\sum_{k=1}^{\infty}\mu(k)\left[\left(1+\frac{k}{n}\right)^{1-a}-1\right]\leq (1-a)\sum_{k=1}^{n}k\mu(k)+n\sum_{k=n+1}^{\infty}\mu(k)\left(\frac{k}{n}\right)^{1-a}.
\end{align*}
\qed
\end{proof}
%We provide now another sufficient condition for  $\infty$ to be inaccessible and entrance.

\begin{proposition}\label{suffcondpropentrance1} Assume that there exists $a\in (0,1)$ such that $\sum_{n=1}^{\infty}n^{1-a}\mu(n)<\infty$. Set the condition \begin{equation}\label{secondcond} \underset{n\rightarrow \infty}{\lim} \frac{n^{1+a}}{\Phi(n)}\sum_{k=n}^{\infty}k^{1-a}\mu(k)= 0.
\end{equation}
%\begin{enumerate}
%\item
If \eqref{secondcond} holds and  $\underset{n\rightarrow \infty}{\limsup}\frac{n}{\Phi(n)}\sum_{k=1}^{n}k\mu(k)<1$,  then the process $(\#\Pi(t),t\geq 0)$ does not explode.
%\item Assume \eqref{secondcond} and $n^{1-a}\bar{\mu}(n)\underset{n\rightarrow \infty}{\longrightarrow} 0$.
%
%If $\sum_{n\geq 2}\frac{1}{\Phi(n)}<\infty$ and $\underset{n\rightarrow \infty}{\limsup}\frac{n}{\Phi(n)}\sum_{k=1}^{n}k\mu(k)<1$ then  if $\Pi(0)$ is proper, the process $(\Pi(t),t\geq 0)$ comes down from infinity and then stays in partitions with finitely many blocks almost-surely. % : $\infty$ is an entrance boundary.
%%\[\underset{n\rightarrow \infty}{\lim} n\bar{\mu}(n)\sum_{k=n}^{\infty}\frac{1}{\Phi(k)}=0\]
%\end{enumerate}
\end{proposition}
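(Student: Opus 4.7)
The plan is to apply Proposition \ref{genmaincornonexplosion} with the Lyapunov function $g_a(n) = n^{1-a}$ for the given exponent $a \in (0,1)$. Here the positive/negative decomposition of $\mathcal{L}$ matches that of the general generator: $\mathcal{L}^f$ plays the role of $\mathscr{L}^+$ and $\mathcal{L}^c$ the role of $\mathscr{L}^-$, so $G_a^+ = G_a^f$ and $G_a^- = G_a^c$. Two things need to be checked: that $\mathcal{L}^f g_a(n) < \infty$ for every $n$, and that $\limsup_{n\to\infty} \tfrac{-G_a^f(n)}{G_a^c(n)} < 1$.

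Finiteness of $\mathcal{L}^f g_a(n)$ for every $n$ is immediate from the standing assumption $\sum_{k\geq 1} k^{1-a}\mu(k) < \infty$, by splitting the sum defining $G_a^f(n)$ at $k=n$ and using $(1+x)^{1-a}-1 \leq (1-a)x$ for $x \in (0,1)$ and $(1+x)^{1-a}-1 \leq x^{1-a}$ for $x \geq 1$ (the two inequalities already invoked in the proof of Lemma \ref{estimateGaentrance}). For the main inequality I would combine the two bounds of Lemma \ref{estimateGaentrance} to get, for all $n$ large enough,
\begin{equation*}
\frac{-G_a^f(n)}{G_a^c(n)} \;\leq\; \frac{n\sum_{k=1}^{n} k\mu(k)}{\Phi(n)} \;+\; \frac{1}{1-a}\cdot \frac{n^{1+a}\sum_{k=n+1}^{\infty} k^{1-a}\mu(k)}{\Phi(n)}.
\end{equation*}
The second term tends to $0$ by hypothesis \eqref{secondcond}, while the $\limsup$ of the first is strictly less than $1$ by the second assumption $\limsup_n \tfrac{n}{\Phi(n)}\sum_{k=1}^n k\mu(k) < 1$. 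Therefore $\limsup_n \tfrac{-G_a^f(n)}{G_a^c(n)} < 1$.

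Proposition \ref{genmaincornonexplosion} then produces $\mathcal{L} g_a(n) \leq c\, g_a(n)$ for some $c > 0$ and all $n \geq 1$, and since $g_a$ is non-decreasing with $g_a(n) \to \infty$ (because $1-a > 0$), the Foster--Lyapunov criterion \eqref{lyapunov} recalled in Section \ref{lyapunovcond} rules out explosion of $(\#\Pi(t), t\geq 0)$ from every initial state. The only mildly delicate point is the matching of the two hypotheses of the proposition with the two pieces in the bound on $-G_a^f(n)$: the small-$k$ block $\sum_{k\leq n} k\mu(k)$ is absorbed by the linear-in-$k$ part of the bound, and the tail block $\sum_{k>n} k^{1-a}\mu(k)$ by the $n^a$ part; no substantial obstacle is expected beyond this accounting.
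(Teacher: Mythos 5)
Your proposal is correct and follows essentially the same route as the paper: the paper's proof likewise combines the two bounds of Lemma \ref{estimateGaentrance} to control $\limsup_n \frac{-G_a^f(n)}{G_a^c(n)}$, kills the tail term via \eqref{secondcond}, and concludes with Proposition \ref{genmaincornonexplosion}. Your additional check that $\mathcal{L}^f g_a(n)<\infty$ (from $\sum_n n^{1-a}\mu(n)<\infty$) is the standing assumption the paper makes just before Lemma \ref{estimateGaentrance}, so nothing is missing.
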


We shall see an example where Proposition \ref{suffcondpropentrance1} applies when establishing Theorem \ref{logcoal} in Section \ref{slowcoalsec}.

\begin{proof} By Lemma \ref{estimateGaentrance},
\begin{equation}\label{bound}\underset{n\rightarrow \infty}{\limsup}\frac{-G^{f}_a(n)}{G_a^{c}(n)}\leq \underset{n\rightarrow \infty}{\limsup} \frac{n}{\Phi(n)}\sum_{k=1}^{n}k\mu(k)+\frac{1}{1-a}\underset{n\rightarrow \infty}{\limsup} \frac{n^{a+1}}{\Phi(n)} \sum_{k=n+1}^{\infty}k^{1-a}\mu(k).
\end{equation} 	
By the assumption \eqref{secondcond}, the second term on the right-hand side of \eqref{bound} vanishes and we get
\[\underset{n\rightarrow \infty}{\limsup}\frac{-G^{f}_a(n)}{G_a^{c}(n)}\leq \underset{n\rightarrow \infty}{\limsup} \frac{n}{\Phi(n)}\sum_{k=1}^{n}k\mu(k).\]
Proposition \ref{genmaincornonexplosion} applies and yields that the process does not explode.
%first statement of the proposition. For the second statement, we will show that $\theta^{\star}<1$ and apply Theorem \ref{cditheorem}.  Assume that $\sum_{n\geq 2}\frac{1}{\Phi(n)}<\infty$. If $n^{1-a}\bar{\mu}(n)\underset{n\rightarrow \infty}{\longrightarrow} 0$, then using that $\frac{n^{1+a}}{\Phi(n)}\underset{n\rightarrow \infty}{\longrightarrow} 0$, there is a constant $C>0$  such that for large enough $n$, $\frac{n^{2}\bar{\mu}(n+1)}{\Phi(n)}\leq Cn^{1-a}\bar{\mu}(n)$
%and $\frac{n^{2}\bar{\mu}(n+1)}{\Phi(n)}\underset{n\rightarrow \infty}{\longrightarrow} 0$.
%On the one hand, since $\ell(n)=\sum_{k=1}^{n}k\mu(k)+n\bar{\mu}(n+1)$, we have that
%\[\underset{n\rightarrow \infty}{\limsup} \frac{n}{\Phi(n)}\sum_{k=1}^{n}k\mu(k)=\underset{n\rightarrow \infty}{\limsup} \frac{n\ell(n)}{\Phi(n)}=:\bar{\theta}^{\star}.\]
%On the other hand, for any $n$
%\begin{equation}\label{assumptionentrance} n\sum_{k=n}^{\infty}\frac{\bar{\mu}(k)}{\Phi(k)}\leq n\bar{\mu}(n)\sum_{k=n}^{\infty}\frac{1}{\Phi(k)}\leq Cn^{1-a}\bar{\mu}(n) \underset{n\rightarrow \infty}{\longrightarrow} 0.
%\end{equation}
%By \cite[Lemma 4.1]{cdiEFC}, we have $\theta^{\star}\leq \bar{\theta}^{\star}<1$. This entails that the process comes down from infinity and since there is no explosion, the boundary $\infty$ is an entrance.
\qed
\end{proof}

%
%We now classify the boundary $\infty$. Define
%\begin{align*}
%\rho&:= \underset{a\rightarrow 1 \atop a>1}{\limsup }\underset{n\rightarrow \infty}{\ \limsup}\frac{-G_a^{c}(n)}{G_a^{f}(n)} \text{ and } \sigma_{\star}:= \underset{a\rightarrow 1 \atop a<1}{\liminf }\underset{n\rightarrow \infty}{\ \limsup}\frac{G_a^{c}(n)}{-G_a^{f}(n)}.
%\end{align*}
%
%\begin{theorem}\label{classification} Assume $\sum_{n\geq 2}\frac{1}{\Phi(n)}<\infty$,
%\begin{itemize}
%\item[(i)] Suppose that there exists $a>1$ such that $G_a^{f}(n)\geq (\log n)^{r}$ with $r>1$.
%\begin{itemize}
%\item[a)] If $\theta^{\star}<1$ and $\rho<1$ then $\infty$ is regular for itself.
%\item[b)] If $\theta_{\star}>1$ and $\rho>1$ then $\infty$ is an exit.
%\end{itemize}
%\item[(ii)] Suppose that there exists $a<1$ such that $\sum_{k=1}^{\infty}k^{1-a}\mu(k)<\infty$.
%\begin{itemize}
%\item[a)] If $\theta^{\star}<1$ and $\sigma_{\star}>1$ then $\infty$ is an entrance.
%\item[b)] If $\theta_{\star}>1$ and $\sigma_{\star}>1$ then $\infty$ is natural.
%\end{itemize}
%\end{itemize}
%\end{theorem}
\subsection{Regularly varying coagulation/fragmentation mechanisms}\

%\subsection{$\alpha$-stable-fragmentation}
In this Section, we will study into more details the cases
\begin{center}
$\Phi(n)\underset{n\rightarrow \infty}{\sim} dn^{\beta +1}$ and $\mu(n)\underset{n\rightarrow \infty}{\sim} \frac{b}{n^{\alpha+1}}$.
\end{center}
We establish Theorem \ref{stablefragtheorem} with several lemmas.
\begin{lemma}\label{part1proofregularvarytheorem} Let $\alpha \in (0,\infty)$ and $\beta\in (0,1)$. One has the following classification.
\begin{itemize}
\item If $\alpha+\beta<1$, then $(\#\Pi(t),t\geq 0)$ explodes almost-surely and $\infty$ is an exit boundary.
\item If $\alpha+\beta>1$, then $(\#\Pi(t),t\geq 0)$ does not explode almost-surely and $\infty$ is an entrance boundary.
\end{itemize}
\end{lemma}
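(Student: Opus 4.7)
The plan is to deduce both statements directly from the already established Theorems \ref{suffcondpropexit} and \ref{suffcondpropentrance} (in fact the authors announce this in the remark after Theorem \ref{stablefragtheorem}). The only real work is to translate the regular variation of $\Phi$ and $\mu$ into the summability/growth conditions appearing in those theorems.

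\textbf{Case $\alpha+\beta<1$.} Since $\beta>0$, this forces $\alpha\in(0,1)$. From $\mu(n)\sim b/n^{1+\alpha}$ one obtains the Karamata-type equivalent
\[
\ell(n)\underset{n\to\infty}{\sim}\frac{b}{\alpha(1-\alpha)}\,n^{1-\alpha},
\]
which is explicitly recorded after the statement of condition $\mathbb{H}$ in the preliminaries; in particular $\ell$ satisfies $\mathbb{H}$. Combining with $\Phi(n)\sim d\,n^{1+\beta}$ gives
\[
\frac{\Phi(n)}{n\,\ell(n)}\underset{n\to\infty}{\sim}\frac{d\,\alpha(1-\alpha)}{b}\,n^{\alpha+\beta-1}\underset{n\to\infty}{\longrightarrow}0,
\]
so Theorem \ref{suffcondpropexit} applies and $\infty$ is an exit boundary.

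\textbf{Case $\alpha+\beta>1$.} Here $\Phi(n)\sim d\,n^{1+\beta}$ with $\beta>0$ immediately gives $\sum_{n\ge 2}1/\Phi(n)<\infty$. For any $\alpha>0$, $\mu(n)\sim b/n^{1+\alpha}$ implies
\[
\bar\mu(n)\underset{n\to\infty}{\sim}\frac{b}{\alpha\,n^{\alpha}},
\]
and hence
\[
\frac{n\,\bar\mu(n)}{\Phi(n)}\underset{n\to\infty}{\sim}\frac{b}{\alpha\,d}\cdot\frac{1}{n^{\alpha+\beta}},
\]
which is summable precisely because $\alpha+\beta>1$. Theorem \ref{suffcondpropentrance} then yields simultaneously that the process does not explode and that $\infty$ is an entrance boundary.

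There is no genuine obstacle here: the proof is reduced to verifying two exponent inequalities. The only point requiring a little care is the passage from $\mu(n)\sim b/n^{1+\alpha}$ to the asymptotics of $\ell(n)$ and $\bar\mu(n)$, which is standard (Karamata/Abel-type summation) and already quoted in Section \ref{preliminaries}.
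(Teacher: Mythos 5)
Your proposal is correct and follows exactly the paper's own route: the paper proves this lemma by simply invoking Theorem \ref{suffcondpropexit} and Theorem \ref{suffcondpropentrance}, and your verification of the hypotheses (the asymptotics of $\ell$ and $\bar\mu$ and the two exponent computations) is accurate, including the observation that $\alpha+\beta<1$ with $\beta>0$ forces $\alpha\in(0,1)$ so that condition $\mathbb{H}$ applies.
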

\begin{proof}
The cases $\alpha+\beta<1$ and $\alpha+\beta>1$ respectively are consequences of Theorem \ref{suffcondpropexit} and Theorem \ref{suffcondpropentrance}, respectively. \qed
\end{proof}
It remains to complete the proof of Theorem \ref{stablefragtheorem} when $\alpha=1-\beta$. In this case, the sufficient conditions for explosion or non explosion obtained in Section \ref{explosec} and Section \ref{nonexplosec} do not allow us to conclude. We provide a finer study of $G_a^{f}$ for this case in the following lemma.

\begin{lemma}\label{estimateGafstable} Let $\alpha\in (0,1)$ and $b_2>b_1>0$. Assume that the splitting measure $\mu$ satisfies for all large enough $k$, \begin{equation}\label{assumptionfrag} \frac{b_1}{k^{\alpha+1}}\leq \mu(k)\leq \frac{b_2}{k^{\alpha+1}}.
\end{equation}

For any $a>1$ and for any $\epsilon>0$, there is $n_0$ such that if $n\geq n_0$,
\begin{equation}\label{lowerboundfrag}
G^{f}_{a}(n)\geq \frac{1}{1+\epsilon} i_\alpha(a)b_1n^{1-\alpha}
\end{equation}
where $i_{\alpha}(a):=\int_{0}^{\infty}\frac{1-(1+u)^{1-a}}{u^{\alpha+1}}\ddr u<\infty$.
\\

For any $a\in (1-\alpha,1)$ and for any $\epsilon>0$, there is $n_0$ such that if $n\geq n_0$,
\begin{equation}\label{upperboundfragstable}
-G^{f}_{a}(n)\leq \frac{1}{1-\epsilon}j_\alpha(a)b_2n^{1-\alpha}
\end{equation}
where $j_{\alpha}(a):=\int_{0}^{\infty}\frac{(1+u)^{1-a}-1}{u^{\alpha+1}}\ddr u<\infty$.
\end{lemma}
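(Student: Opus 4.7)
The approach is to recognize both estimates as Riemann-sum approximations. Writing $\psi(u):=1-(1+u)^{1-a}$ and $\phi(u):=\psi(u)/u^{1+\alpha}$, we have
\begin{equation*}
G_a^{f}(n) = n \sum_{k=1}^{\infty}\mu(k)\psi(k/n).
\end{equation*}
For the lower bound ($a>1$, so $\psi\geq 0$), I would use $\mu(k)\geq b_1/k^{1+\alpha}$ for $k\geq K_0$ and discard the nonnegative contribution from $k<K_0$ to obtain
\begin{equation*}
G_a^{f}(n)\geq b_1\, n^{1-\alpha}\, R_n, \qquad R_n:=\frac{1}{n}\sum_{k\geq K_0}\phi(k/n).
\end{equation*}
The claim then reduces to showing $R_n\to i_\alpha(a)$ as $n\to\infty$.

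That $i_\alpha(a)<\infty$ follows from $\psi(u)\sim (a-1)u$ as $u\to 0^+$ (so $\phi(u)\sim (a-1)/u^{\alpha}$, integrable since $\alpha<1$) and $\psi(u)\to 1$ as $u\to\infty$ (so $\phi(u)\sim u^{-1-\alpha}$). To establish convergence of $R_n$, I would fix $0<\delta<M<\infty$ and split the sum into the three regimes $K_0\leq k\leq \delta n$, $\delta n<k\leq Mn$, and $k>Mn$. The middle piece is a standard Riemann sum for the continuous function $\phi$ on the compact interval $[\delta,M]$ and converges to $\int_\delta^{M}\phi(u)\,\ddr u$. For the two tails, the elementary bounds $\phi(u)\leq (a-1)u^{-\alpha}$ and $\phi(u)\leq u^{-1-\alpha}$ (the former from the mean value theorem, the latter from $\psi\leq 1$) yield, by comparison with the corresponding integrals, estimates of order $O(\delta^{1-\alpha})$ and $O(M^{-\alpha})$ respectively, uniformly in $n$. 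Letting $n\to\infty$ first and then $\delta\to 0$, $M\to\infty$ gives $R_n\to i_\alpha(a)$, which proves the lower bound for $n$ large enough.

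For the upper bound in the range $a\in(1-\alpha,1)$, the same strategy applies with $\tilde\psi(u):=(1+u)^{1-a}-1\geq 0$ and $\tilde\phi(u):=\tilde\psi(u)/u^{1+\alpha}$. Splitting $-G_a^{f}(n)=n\sum_{k<K_0}\mu(k)\tilde\psi(k/n)+n\sum_{k\geq K_0}\mu(k)\tilde\psi(k/n)$, the first finite sum is $O(1)$ using $\tilde\psi(k/n)\leq (1-a)k/n$ for fixed $k$ and large $n$, which is negligible compared to $n^{1-\alpha}$; the second is then controlled by $\mu(k)\leq b_2/k^{1+\alpha}$ and the three-region splitting as before, yielding the asymptotic $b_2 n^{1-\alpha}[j_\alpha(a)+o(1)]$. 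The main technical point, and the reason for the restriction $a>1-\alpha$, is the integrability of $\tilde\phi$ at infinity: there $\tilde\psi(u)\sim u^{1-a}$, so $\tilde\phi(u)\sim u^{-a-\alpha}$, and $a+\alpha>1$ is precisely the condition for $j_\alpha(a)<\infty$. This threshold phenomenon is the only delicate feature of the argument; the remaining steps are routine Riemann-sum bookkeeping.
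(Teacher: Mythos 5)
Your proposal is correct and follows essentially the same route as the paper: both identify $G_a^{f}(n)/n^{1-\alpha}$ as a Riemann-sum approximation of $i_\alpha(a)$ (resp.\ $j_\alpha(a)$ for $a\in(1-\alpha,1)$) and justify the convergence by comparison with the integral. Your three-region splitting merely makes explicit the uniform tail control that the paper compresses into a chain of asymptotic equivalences $\sum_{k\geq 2}\approx\sum_k\int_{k-1}^{k}\approx n^{1-\alpha}\int_{1/n}^{\infty}$, so no further comment is needed.
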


\begin{proof}
Let $a>1$. For any $n\geq 1$,
\begin{align*}
n\sum_{k=1}^{\infty}\frac{1}{k^{\alpha +1}}\left[1-\left(1+\frac{k}{n}\right)^{1-a}\right]&= n\sum_{k=2}^{\infty}\frac{1}{k^{\alpha +1}}\left[1-\left(1+\frac{k}{n}\right)^{1-a}\right]+n\left(1-(1+1/n)^{1-a}\right)
\end{align*}
and
\begin{align*}
n\sum_{k=2}^{\infty}\frac{1}{k^{\alpha +1}}\left[1-\left(1+\frac{k}{n}\right)^{1-a}\right] 	&\underset{n\rightarrow \infty}{\sim } n\sum_{k=2}^{\infty} \int_{k-1}^{k}\frac{1}{x^{\alpha +1}}\left[1-\left(1+\frac{x}{n}\right)^{1-a}\right]\ddr x\\
& \underset{n\rightarrow \infty}{\sim } n\int_{1}^{\infty}\frac{1}{x^{\alpha +1}}\left[1-\left(1+\frac{x}{n}\right)^{1-a}\right]\ddr x\\
& \underset{n\rightarrow \infty}{\sim } n^{1-\alpha}\int_{1/n}^{\infty}\frac{1-(1+u)^{1-a}}{u^{\alpha+1}}\ddr u \underset{n\rightarrow \infty}{\sim} i_\alpha(a) n^{1-\alpha},
\end{align*}
with $i_{\alpha}(a):=\int_{0}^{\infty}\frac{1-(1+u)^{1-a}}{u^{\alpha+1}}\ddr u<\infty$.
In particular, we see that for any $\epsilon>0$, there is $n_0$ such that if $n\geq n_0$,
\eqref{lowerboundfrag} holds. The proof is similar for the case $a<1$.\qed
%\begin{equation}\label{lowerboundfrag}
%G^{b}_{a}(n)\geq b_1\frac{b_\alpha(a)}{1+\epsilon} n^{1-\alpha}.
%\end{equation}
\end{proof}

We will be able to study the explosion in the case of a regularly coagulation mechanism $\Phi(n)\underset{n\rightarrow \infty}{\sim} dn^{\beta+1}$ when $\alpha+\beta=1$ by applying Lemma \ref{estimateGafstable}. We need first an analytical lemma. Recall functions $i_\alpha$ and $j_\alpha$ defined in Lemma \ref{estimateGafstable}.

\begin{lemma}\label{ineq} For all $\alpha \in (0,1)$, set $I(\alpha):=\int_{0}^{\infty}\frac{\log(1+u)}{u^{\alpha+1}}\ddr u$. Then  for all $\alpha\in (0,1)$, $I(\alpha)=\frac{\pi}{\alpha \sin(\pi \alpha)}>\frac{1}{\alpha(1-\alpha)}$. Further, 
\[\frac{i_{\alpha}(a)}{a-1}\underset{a\rightarrow 1^{+}}\longrightarrow I(\alpha) \text{\,\, and \,\, } \frac{j_{\alpha}(a)}{a-1}\underset{a\rightarrow 1^{-}}\longrightarrow I(\alpha).\]

\end{lemma}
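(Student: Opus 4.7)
The lemma splits into two independent pieces: the closed-form evaluation of $I(\alpha)$ (together with its comparison to $1/(\alpha(1-\alpha))$), and the two limiting identities.

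For $I(\alpha)$, I would integrate by parts using the antiderivative $-u^{-\alpha}/\alpha$ of $u^{-\alpha-1}$. Since $\alpha\in(0,1)$, the boundary term $-\log(1+u)/(\alpha u^{\alpha})$ vanishes at both endpoints (it behaves like $u^{1-\alpha}$ near $0$ and like $u^{-\alpha}\log u$ at $\infty$), leaving
\[
I(\alpha) \;=\; \frac{1}{\alpha}\int_0^\infty \frac{u^{-\alpha}}{1+u}\ddr u \;=\; \frac{1}{\alpha}B(1-\alpha,\alpha) \;=\; \frac{\Gamma(1-\alpha)\Gamma(\alpha)}{\alpha} \;=\; \frac{\pi}{\alpha\sin(\pi\alpha)}
\]
by Euler's reflection formula. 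The comparison $I(\alpha)>1/(\alpha(1-\alpha))$ then reduces to $\sin(\pi\alpha)<\pi(1-\alpha)$; writing $\sin(\pi\alpha)=\sin(\pi(1-\alpha))$ and applying the elementary bound $\sin x<x$ at $x=\pi(1-\alpha)\in(0,\pi)$ delivers this at once.

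For the limit involving $i_\alpha$, the key observation is the integral representation
\[
\frac{1-(1+u)^{1-a}}{a-1} \;=\; \int_0^{\log(1+u)}\mathrm{e}^{-(a-1)t}\,\ddr t,
\]
obtained from $1-\mathrm{e}^{-s}=\int_0^s\mathrm{e}^{-r}\ddr r$ with $s=(a-1)\log(1+u)$. Plugging this into the definition of $i_\alpha(a)$ and using Fubini (all integrands nonnegative when $a>1$) yields
\[
\frac{i_\alpha(a)}{a-1} \;=\; \int_0^\infty \frac{1}{u^{\alpha+1}}\int_0^{\log(1+u)}\mathrm{e}^{-(a-1)t}\,\ddr t\,\ddr u.
\]
For $a>1$ the inner integral is bounded above by $\log(1+u)$ and converges pointwise to $\log(1+u)$ as $a\to 1^+$. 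Since $\log(1+u)/u^{\alpha+1}$ is integrable with integral $I(\alpha)$, dominated convergence gives $i_\alpha(a)/(a-1)\to I(\alpha)$.

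The analogous representation $\frac{(1+u)^{1-a}-1}{1-a}=\int_0^{\log(1+u)}\mathrm{e}^{(1-a)t}\,\ddr t$ handles $j_\alpha(a)$. The only mildly delicate point, and the main technical step of the whole lemma, is that the exponential factor now exceeds one, so a uniform dominating function is needed. I would restrict to $a$ in a left neighbourhood $(a_0,1)$ with $1-a_0<\alpha$: on this range $\mathrm{e}^{(1-a)t}\leq (1+u)^{1-a_0}$, and the resulting dominating function $\log(1+u)(1+u)^{1-a_0}/u^{\alpha+1}$ is integrable both near $0$ (of order $u^{-\alpha}$) and at $\infty$ (of order $u^{-\alpha-a_0}\log u$, integrable thanks to $\alpha+a_0>1$). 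Once this dominating function is in hand, dominated convergence yields the second limit, giving the required asymptotic of $j_\alpha(a)$ as $a\to 1^{-}$.
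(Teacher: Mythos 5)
Your proof is correct and follows essentially the same route as the paper's: integration by parts reducing $I(\alpha)$ to a Beta integral evaluated by Euler's reflection formula, and dominated convergence for the two limits, with the same dominating function $\log(1+u)(1+u)^{1-a_0}/u^{\alpha+1}$ in the $j_\alpha$ case (the paper takes $a\geq 1-\alpha+\epsilon$, i.e. $a_0=1-\alpha+\epsilon$). Note that you correctly normalize $j_\alpha(a)$ by $1-a$ rather than the $a-1$ written in the statement --- the paper's own proof does the same, the statement carrying a sign typo --- and your elementary derivation of the inequality via $\sin(\pi\alpha)=\sin(\pi(1-\alpha))<\pi(1-\alpha)$ is an equally valid variant of the paper's monotonicity argument.
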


\begin{proof} We first establish the convergence statements. Recall $i_\alpha(a)=\int_{0}^{\infty}\frac{1-(1+u)^{1-a}}{u^{\alpha+1}}\ddr u$.  For any $a>1$ and any $u>0$, \[1-(1+u)^{1-a}=1-e^{-(a-1)\log (1+u)}\leq (a-1)\log(1+u).\] By Lebesgue's theorem, we have  $\frac{i_\alpha(a)}{a-1}\underset{a\rightarrow 1}{\longrightarrow} I(\alpha):=\int_{0}^{\infty}\frac{\log(1+u)}{u^{\alpha+1}}\ddr u$.

Recall $j_\alpha(a)=\int_{0}^{\infty}\frac{(1+u)^{1-a}-1}{u^{\alpha+1}}\ddr u<\infty$, for any $1-\alpha<a<1$.
%We show that $\frac{1}{1-a}j_\alpha(a)\underset{a\rightarrow 1^{-}}{\longrightarrow} I(\alpha)$.
Let $\epsilon>0$ and assume $a>1-\alpha+\epsilon$. By applying the mean value theorem to the function  $a\mapsto (1+u)^{1-a}-1= e^{(1-a)\log(1+u)}-1$, we get that for any $u>0$  \[(1+u)^{1-a}-1\leq (1-a)\log(1+u)(1+u)^{1-a}\leq (1-a)\log(1+u)(1+u)^{\alpha-\epsilon}.\]
Thus, $\frac{1}{1-a}\frac{1-(1+u)^{1-a}}{u^{\alpha+1}}\leq \log(1+u)\frac{(1+u)^{\alpha-\epsilon}}{u^{\alpha+1}}$. The function $u\mapsto \log(1+u)\frac{(1+u)^{\alpha-\epsilon}}{u^{\alpha+1}}$ is integrable on $(0,\infty)$ and by Lebesgue's theorem, we have  $\frac{j_\alpha(a)}{1-a}\underset{a\rightarrow 1^{-}}{\longrightarrow} I(\alpha)$.

We now show the identity $I(\alpha)=\frac{\pi}{\alpha \sin(\pi \alpha)}$ for any $\alpha\in (0,1)$. One has, by integration by parts and the change of variable $v=\frac{1}{1+u}$
\begin{align*}
I(\alpha):=\int_{0}^{\infty}\frac{\log(1+u)}{u^{1+\alpha}}\ddr u&=\frac{1}{\alpha}\int_{0}^{\infty}\frac{1}{1+u}u^{-\alpha}\ddr u\\
&=\frac{1}{\alpha}\int_{0}^{1}\frac{1}{v}\left(\frac{1}{v}-1\right)^{-\alpha}\ddr v\\
&=\frac{1}{\alpha}\int_{0}^{1}v^{\alpha-1}(1-v)^{-\alpha}\ddr v\\
&=\frac{1}{\alpha}\mathrm{Beta}(\alpha,1-\alpha)=\frac{1}{\alpha}\Gamma(1-\alpha)\Gamma(\alpha)=\frac{1}{\alpha}\frac{\pi}{\sin(\pi \alpha)}
\end{align*}
where $\mathrm{Beta}(\cdot,\cdot)$ is the Beta function. We have used Beta-Gamma relation and Euler's reflection formula in the two last equalities, see e.g. \cite[VIII.3]{Schwartz}. The strict inequality $\frac{\pi}{\alpha \sin(\pi \alpha)}>\frac{1}{\alpha(1-\alpha)}$ can be easily checked by showing that the function $\alpha\mapsto (1-\alpha)\pi-\sin(\pi \alpha)$ is strictly decreasing on $(0,1]$.\qed
\end{proof}
%
%\begin{equation}\label{Ialpha} \alpha(1-\alpha)I(\alpha)>1 \end{equation}
%Moreover $$\alpha(1-\alpha)I(\alpha)\underset{\alpha\rightarrow 1}{\longrightarrow} 1  \text{ and } \alpha(1-\alpha)I(\alpha)\underset{\alpha\rightarrow 0}{\longrightarrow} \infty.$$
%\end{lemma}
%\begin{proof}
%For all $\alpha\in (0,1)$ and all $\epsilon>0$
%\[I(\alpha)=\int_{0}^{\epsilon}\frac{\log(1+u)}{u^{\alpha+1}}\ddr u+\int_{\epsilon}^{\infty}\frac{\log(1+u)}{u^{\alpha+1}}\ddr u.\]
%We study separately the order of these two integrals. By integration by parts:
%\begin{align*}
%\int_{0}^{\epsilon}\frac{\log(1+u)}{u^{\alpha+1}}\ddr u&=\left[\log(1+u)\frac{u^{-\alpha}}{-\alpha}\right]_{0}^{\epsilon}+\frac{1}{\alpha}\int_{0}^{\epsilon}\frac{1}{1+u}\frac{1}{u^{\alpha}}\ddr u\\

\begin{lemma}\label{explosioninstable} Assume that $\mu(n)\underset{n\rightarrow \infty}{\sim} \frac{b}{n^{\alpha+1}}$, $\Phi(n)\underset{n\rightarrow\infty}{\sim} dn^{\beta +1}$ and  $\alpha+\beta=1$. For any $n\in \mathbb{N}$, under $\mathbb{P}_n$,
\begin{enumerate}
\item if $\frac{d}{b}<I(\alpha)$, then $(\#\Pi(t),t\geq 0)$ explodes almost-surely,
\item if $\frac{d}{b}>I(\alpha)$, then $(\#\Pi(t),t\geq 0)$ does not explode almost-surely.
\end{enumerate}
\end{lemma}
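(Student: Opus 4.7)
The plan is to handle the two cases by applying Corollary \ref{genexplosionmaincor} and Proposition \ref{genmaincornonexplosion} respectively, with the choice of Lyapunov function $g_a(n) = n^{1-a}$, and letting $a \to 1$ so that the constants $(a-1)/i_\alpha(a)$ and $(1-a)/j_\alpha(a)$ match $1/I(\alpha)$ via Lemma \ref{ineq}. The critical ingredient is that under the balance $\alpha+\beta=1$, both the coalescent contribution $\Phi(n)/n \sim dn^{1-\alpha}$ and the fragmentation contribution (bounded by Lemma \ref{estimateGafstable}) are regularly varying of the same order $n^{1-\alpha}$, so the comparison becomes a comparison of explicit constants.

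For case (1), I would follow the route of Lemma \ref{lemmaexplosion}. Fix $p \in (0,1)$ small and consider the Markov process with generator $\mathcal{L}^p$ obtained by stopping large coalescences. Combining Lemma \ref{estimatesexplosion}(i), $-G_a^{c,p}(n)\le (a-1)(1-p)^{-a}\Phi(n)/n$, with the lower bound $G_a^f(n) \ge \frac{i_\alpha(a)\, b}{1+\varepsilon}\, n^{1-\alpha}$ provided by Lemma \ref{estimateGafstable} (applied with $b_1 = b(1-\eta)$ after choosing $n$ large) and the asymptotic $\Phi(n)/n \sim d n^{1-\alpha}$, I would obtain
\begin{equation*}
\limsup_{n\to\infty}\frac{-G_a^{c,p}(n)}{G_a^f(n)} \;\le\; (1-p)^{-a}\,(1+\varepsilon)(1+\eta)\,\frac{d}{b}\,\frac{a-1}{i_\alpha(a)}.
\end{equation*}
By Lemma \ref{ineq}, $(a-1)/i_\alpha(a) \to 1/I(\alpha)$ as $a\to 1^+$, so under the hypothesis $d/b < I(\alpha)$ one may select $a>1$ sufficiently close to $1$ and $p,\varepsilon,\eta$ sufficiently small so that the lim sup is strictly less than $1$. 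The function $g(\log n)\log n = c n^{1-\alpha}$ satisfies the integrability condition required by Corollary \ref{genexplosionmaincor}, which yields a positive explosion probability for the unstopped $\mathcal{L}^p$-chain starting from any sufficiently large $n_0$. Transferring this to the original process $(\#\Pi(t),t\ge 0)$ is done exactly as in the second half of the proof of Lemma \ref{lemmaexplosion}: using that $\sigma_p := \inf_{n \ge 2} \sigma_p^{(n)} > 0$ almost surely together with the estimate $\mathbb{P}_{n_0}(\tau_\infty^{+,p} \le t) \to 1$ gives positive explosion probability from large $n_0$, and irreducibility plus the Markov property upgrades this to almost-sure explosion from any $n \in \mathbb{N}$.

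For case (2), I would directly apply Proposition \ref{genmaincornonexplosion} (no stopping is needed since we only want non-explosion). Choose $a \in (1-\alpha,1)$, which guarantees $\sum_k k^{1-a}\mu(k) < \infty$ because $\mu(k) \sim b/k^{\alpha+1}$. Lemma \ref{estimateGaentrance}(1) gives $G_a^c(n) \ge (1-a)\Phi(n)/n$, while Lemma \ref{estimateGafstable} yields $-G_a^f(n) \le \tfrac{j_\alpha(a)\, b}{1-\varepsilon}n^{1-\alpha}$. Using $\Phi(n)/n \sim dn^{1-\alpha}$, one gets
\begin{equation*}
\limsup_{n\to\infty}\frac{-G_a^f(n)}{G_a^c(n)} \;\le\; \frac{b(1+\eta)}{d(1-\varepsilon)^2}\,\frac{j_\alpha(a)}{1-a},
\end{equation*}
and Lemma \ref{ineq} gives $j_\alpha(a)/(1-a) \to I(\alpha)$ as $a \to 1^-$. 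Under $d/b > I(\alpha)$, choosing $a$ close to $1$ and then $\varepsilon,\eta$ small makes this lim sup strictly less than $1$, and Proposition \ref{genmaincornonexplosion} delivers non-explosion.

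The main technical step will be the explosion direction, because one must juggle three separate small parameters ($a-1$, $p$, and the approximation error $\varepsilon,\eta$ in $\mu$ and $\Phi$) while invoking Lemma \ref{estimateGafstable} and the delicate asymptotic of Lemma \ref{ineq}; one has to take the limits in the correct order so that $\limsup_n \frac{-G_a^{c,p}(n)}{G_a^f(n)} < 1$ strictly, rather than just $\le 1$. By contrast, the non-explosion direction is cleaner since no stopping procedure is involved and the Foster--Lyapunov condition is used directly.
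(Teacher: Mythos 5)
Your proposal is correct and follows essentially the same route as the paper: Corollary \ref{genexplosionmaincor} applied to the $p$-stopped process with the bounds from Lemma \ref{estimatesexplosion}(i) and Lemma \ref{estimateGafstable}, then $a\to 1^{+}$ via Lemma \ref{ineq} for explosion; and Proposition \ref{genmaincornonexplosion} with Lemma \ref{estimateGaentrance}(1) and $a\to 1^{-}$ for non-explosion, with the transfer to the unstopped process handled as in Lemma \ref{lemmaexplosion}. The extra bookkeeping parameters $\varepsilon,\eta$ you introduce are absorbed into the $\limsup$ in the paper's version, but the argument is the same.
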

\begin{proof} We establish assertion (1).  Assume that $\frac{1}{I(\alpha)}\frac{d}{b}<1$. Let $p$ be small enough such that $\frac{1}{1-p}\frac{1}{I(\alpha)}\frac{d}{b}<1$. Recall that $(N_t^{(p)},t\geq 0)$ denotes the process $(\#\Pi(t),t\geq 0)$ stopped at $\sigma_p$. We show that $(N_t^{(p)},t\geq 0)$ explodes with positive probability by using Corollary \ref{genexplosionmaincor}. Using Lemma \ref{estimateGafstable}, we see that the first condition on $G_a^{f}$ is plainly satisfied with for instance $g(n):=ce^{(1-\alpha)n}/n$ for some constant $c>0$. According to Lemma \ref{estimatesexplosion}-(i), \[-G_a^{c,p}(n)\leq \frac{\Phi(n)}{n}(a-1)\left(1-p\right)^{-a}.\]
Combining this latter bound with Lemma \ref{estimateGafstable}, we get that
\[\underset{n\rightarrow \infty}{\limsup} \frac{-G_a^{c,p}(n)}{G_a^{f}(n)}\leq (1-p)^{-a}\frac{a-1}{i_\alpha(a)}\underset{n\rightarrow \infty}{\limsup} \frac{\Phi(n)}{bn^{2-\alpha}}.\]
Since $\Phi(n)\underset{n\rightarrow \infty}{\sim} dn^{2-\alpha}$, we have
\begin{equation}\label{upperboundstable} \underset{n\rightarrow \infty}{\limsup} \frac{-G_a^{c,p}(n)}{G_a^{f}(n)}\leq \frac{1}{(1-p)^{a}}\frac{a-1}{i_\alpha(a)}\frac{d}{b}=:\gamma_{a,p}.
\end{equation}
The upper bound in \eqref{upperboundstable}, $\gamma_{a,p}$, converges towards $\frac{1}{1-p}\frac{1}{I(\alpha)}\frac{d}{b}<1$ as $a$ goes towards $1^{+}$. We can therefore find $a>1$ close enough to $1$ such that $\underset{n\rightarrow \infty}{\limsup} \frac{-G_a^{c,p}(n)}{G_a^{f}(n)}<1$. The fact that the unstopped process $(\#\Pi(t),t\geq 0)$ explodes almost-surely is proven by the same argument as in the proof of Lemma \ref{lemmaexplosion}.

We now establish (2).
%
%Recall $j_\alpha(a)$ in Lemma \ref{estimateGafstable} will provide that  for any $a<1$, for all $\epsilon>0$, there is $n_0$ such that if $n\geq n_0$ then
%\begin{equation}\label{upperboundfrag}
%-G^{f}_{a}(n)\leq \frac{1}{1+\epsilon} i_\alpha(a)bn^{1-\alpha}.
%\end{equation}
Recall Lemma \ref{estimateGaentrance} and the bound $G^{c}_{a}(n)\geq (1-a)\frac{\Phi(n)}{n}$ for all $n$. Then \[\underset{n\rightarrow \infty}{\limsup} \frac{-G_a^{f}(n)}{G_a^{c}(n)}\leq \frac{j_{\alpha}(a)}{1-a}\frac{b}{d}.\]
The upper bound goes to $\frac{b}{d}I(\alpha)$ as $a$ goes to $1$. Thus, if $\frac{b}{d}I(\alpha)< 1$, one can find $a<1$ close enough to $1$ such that
\[\underset{n\rightarrow \infty}{\limsup} \frac{-G_a^{f}(n)}{G_a^{c}(n)}<1.\]
By Proposition \ref{genmaincornonexplosion}, the process $(\#\Pi(t),t\geq 0)$ does not explode.  \qed
\end{proof}

We now classify the possible behaviors of the process $(\Pi(t),t\geq 0)$ on the boundary of partitions with infinitely many blocks, by combining the properties of explosion and of coming down from infinity of the process $(\#\Pi(t),t\geq 0)$.

%\begin{remark} We mention that by expanding $\frac{1}{1+u}$ as $\sum_{k=0}^{\infty}(-1)^{k}u^{k}$ in \eqref{functionalidentity}, we can show that for any $\alpha\in (0,1)$, $$\alpha I(\alpha)= \sum_{k=0}^{\infty}(-1)^{k}\left(\frac{1}{k+\alpha}+\frac{1}{k+1-\alpha}\right)=\Phi(-1,1,\alpha)+\Phi(-1,1,1-\alpha)$$ where $(z,s,\alpha)\mapsto \Phi(z,s,\alpha)$ is the Lerch zeta function. Properties of Lerch zeta function (which ones?) entail that
%\[I(\alpha)=\frac{1}{\alpha}\frac{\pi}{2}\left(\tan\left(\frac{\pi}{2} \alpha\right)+\mathrm{cotan}\left(\frac{\pi}{2}\alpha\right)\right)\]
%where $\cot$ is the cotangent function.
%\end{remark}}}
\begin{lemma}\label{part2proofregularvarytheorem} Assume $\mu(n)\underset{n\rightarrow \infty}{\sim}\frac{b}{n^{\alpha+1}}$ with $b>0$ and $\Phi(n)\underset{n\rightarrow \infty}{\sim} dn^{\beta+1}$ with $\beta=1-\alpha$, $d>0$. Set $\sigma:=\frac{b}{d}\frac{\pi}{\alpha \sin(\pi \alpha)}$ and recall $\theta:=\frac{b}{d}\frac{1}{\alpha(1-\alpha)}$ defined in Proposition \ref{regularcdi}. One has $\sigma>\theta$ for all $\alpha \in (0,1)$ and
\begin{itemize}
\item if $\theta>1$ then $(\#\Pi(t),t\geq 0)$ explodes and stays infinite almost-surely: the boundary $\infty$ is an exit,
\vspace*{1mm}
\item if $\theta<1<\sigma$ then $(\#\Pi(t),t\geq 0)$ explodes and comes down from infinity almost-surely: the boundary $\infty$ is regular,
\vspace*{1mm}
\item if $\sigma<1$ then $(\#\Pi(t),t\geq 0)$ does not explode and comes down from infinity almost-surely: the boundary $\infty$ is an entrance.
\end{itemize}
\end{lemma}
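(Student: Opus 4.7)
The plan is to combine three ingredients that have already been established in the paper: the explosion dichotomy of Lemma \ref{explosioninstable}, the coming-down-from-infinity dichotomy in Theorem \ref{cditheorem} (specialised via Proposition \ref{regularcdi}), and the analytic inequality $I(\alpha)>1/(\alpha(1-\alpha))$ from Lemma \ref{ineq}. The bookkeeping between the quantities $\sigma$ and $\theta$ is then essentially all that remains.

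First, I would observe that Lemma \ref{ineq} gives $I(\alpha)\alpha(1-\alpha)>1$ for every $\alpha\in(0,1)$; multiplying by $b/d$ this is precisely $\sigma>\theta$, so the three regimes $\theta>1$, $\theta<1<\sigma$ and $\sigma<1$ are pairwise exclusive. Next, I would translate the hypotheses $\sigma\gtrless 1$ and $\theta\gtrless 1$ into the criteria of the previous lemmas. Since $\sigma=(b/d)I(\alpha)$, Lemma \ref{explosioninstable}(1)--(2) reads:  $\sigma>1$ implies almost sure explosion of $(\#\Pi(t),t\geq 0)$ from any finite starting point, while $\sigma<1$ implies no explosion. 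Because $\Phi(n)\sim dn^{2-\alpha}$ with $\alpha\in(0,1)$, Schweinsberg's series $\sum 1/\Phi(n)$ converges, so Theorem \ref{cditheorem} applies; Proposition \ref{regularcdi} identifies the CDI parameter as exactly $\theta$, hence $\theta<1$ yields coming down from infinity a.s., while $\theta>1$ yields staying infinite a.s.

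With these translations, the case analysis is immediate. If $\theta>1$ then a fortiori $\sigma>\theta>1$, so the process explodes from any finite state and, once at a partition with infinitely many blocks, stays there forever: $\infty$ is reachable and absorbing, i.e.\ an exit. If $\theta<1<\sigma$ then the process both explodes from finite states and comes down instantaneously (by Proposition \ref{instantaneouscdi}) from any partition with infinitely many blocks; $\infty$ is therefore both accessible and non-absorbing, which is the definition of a regular boundary. If $\sigma<1$ then $\theta<\sigma<1$, so the process never explodes from finite states while coming down a.s.\ from infinity; thus $\infty$ is inaccessible from $\mathbb{N}$ but the process can be started at $\infty$ and leaves immediately, i.e.\ $\infty$ is an entrance.

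There is no genuine obstacle left: the delicate analytic work was concentrated in Lemma \ref{explosioninstable} (where the critical threshold $I(\alpha)$ was pinned down by a careful choice of Lyapunov exponent $a$ close to $1$ in Corollary \ref{genexplosionmaincor} and Proposition \ref{genmaincornonexplosion}) and in the CDI Theorem \ref{cditheorem}. The only mild care needed here is to state everything in the Feller boundary terminology recalled at the beginning of Section \ref{results}, and to note that the "stays at $\infty$" conclusion in the exit case really does follow from the CDI dichotomy applied to a partition with infinitely many blocks, since after explosion the partition-valued process $(\Pi(t),t\geq 0)$ is still defined and lies in the set of partitions with infinitely many blocks, to which Theorem \ref{cditheorem} applies verbatim.
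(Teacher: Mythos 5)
Your proposal is correct and follows essentially the same route as the paper: establish $\sigma>\theta$ via Lemma \ref{ineq}, read off explosion/non-explosion from Lemma \ref{explosioninstable} according to $\sigma\gtrless 1$, read off coming down from infinity/staying infinite from Proposition \ref{regularcdi} and Theorem \ref{cditheorem} according to $\theta\gtrless 1$, and combine the two dichotomies in the three regimes. The bookkeeping $\sigma=(b/d)I(\alpha)$ and the case analysis match the paper's argument exactly.
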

\begin{proof}
First we note that if $\mu(n)\underset{n\rightarrow \infty}{\sim}\frac{b}{n^{\alpha+1}}$ then $\bar{\mu}(n)\underset{n\rightarrow \infty}{\sim} \frac{\lambda}{n^{\alpha}}$ with $\lambda=b/\alpha$. Recall Proposition \ref{regularcdi} and Theorem \ref{cditheorem}. When $\theta=\frac{\lambda}{d(1-\alpha)}>1$ (respectively, $\theta<1$), the process stays infinite (respectively, comes down from infinity).  The strict inequality in Lemma \ref{ineq} ensures that $\sigma> \theta$. In particular, we see that if $\theta>1$, then $\sigma>1$ which entails on the one hand that the process cannot leave infinity, and on the other hand, by Lemma \ref{explosioninstable}, that started from a finite state, it explodes almost-surely. When $\sigma>1>\theta$, by Lemma \ref{explosioninstable}, $(\#\Pi(t),t\geq 0)$ explodes a.s and by Proposition \ref{regularcdi}, it comes down from infinity a.s, thus $\infty$ is regular. \qed
\end{proof}
\noindent \textbf{Proof of Theorem \ref{stablefragtheorem}.} The first two statements of Theorem \ref{stablefragtheorem} are provided by Lemma \ref{part1proofregularvarytheorem}. The third statement is provided by Lemma \ref{part2proofregularvarytheorem}. Note that $\sigma>1$ is equivalent to $\frac{b}{d}>\frac{\alpha \sin \pi \alpha }{\pi}$ and $\theta<1$ is equivalent to $\frac{b}{d}<\alpha(1-\alpha)$. \qed

It remains to show that when the boundary is regular, it is regular for itself. The following lemma establishes Proposition \ref{regularforitself}.
\begin{lemma}\label{regularforitselflemma} Consider a simple EFC process $(\Pi(t),t\geq 0)$ with coagulation and splitting measures satisfying the assumptions of Lemma \ref{part2proofregularvarytheorem}. Assume that $\#\Pi(0)=\infty$ and recall $\tau_\infty^{+}:=\inf\{t>0;\#\Pi(t-)=\infty\}$. If $\theta<1<\sigma$ then, \[\mathbb{P}(\tau^{+}_{\infty}=0)=1.\]
\end{lemma}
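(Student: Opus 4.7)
The plan is to combine two ingredients: the monotone coupling $(\Pi^{(n)}(t))_{t\ge 0,\, n\ge 1}$ of Lemma \ref{monotonicity}, and a quantitative strengthening of Lemma \ref{explosioninstable}(1) saying that from very large finite starting states the process explodes in arbitrarily short time with probability close to $1$. This is the exact analogue of Lemma \ref{tregular}, but in the present regular regime where $\rho>1$ rather than $\rho<1/2$.

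I would first establish this quantitative statement by running the scheme of the proof of Lemma \ref{tregular}. In the proof of Lemma \ref{explosioninstable}(1) we already exhibited $p\in(0,1)$ small and $a>1$ close to $1$ for which $\underset{n\to\infty}{\limsup}(-G_a^{c,p}(n))/G_a^{f}(n)<1$; with Lemma \ref{estimateGafstable} this gives the hypotheses of Corollary \ref{genexplosionmaincor} for the stopped chain $(N_t^{(p)})$, the comparison function being $g(v)\propto e^{(1-\alpha)v}/v$ (for which $\int^{\infty}\frac{dv}{v g(v)}<\infty$). The quantitative estimate \eqref{lowerboundexplosion} of Lemma \ref{keylemma} then yields $\mathbb{P}_{n_0}(\tau_\infty^{+,p}\le \varphi(n_0))\underset{n_0\to\infty}{\longrightarrow} 1$ with $\varphi(n_0)\to 0$, and the coupling-to-$\sigma_p$ argument at the end of Lemma \ref{tregular}'s proof (based on $\sigma_p>0$ a.s.\ and $\tau_\infty^{+}\le \tau_\infty^{+,p}$ on $\{\tau_\infty^{+,p}<\sigma_p\}$) upgrades this to $\mathbb{P}_n(\tau_\infty^{+}\le t)\underset{n\to\infty}{\longrightarrow} 1$ for every $t>0$.

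I would then transfer this bound to the process started from $\infty$. Assuming $\Pi(0)$ proper (which is all Lemma \ref{monotonicity} requires), the latter supplies processes $(\#\Pi^{(n)}(t),t\geq 0)$ with $\#\Pi^{(n)}(0)=n$ and a pathwise monotone coupling $\#\Pi^{(n)}(s)\le \#\Pi^{(n+1)}(s)\le \#\Pi(s)$, together with the law identity $(\#\Pi^{(n)}(t),\, t<\tau_\infty^{+,(n)})\stackrel{d}{=}(\#\Pi(t),\, t<\tau_\infty^{+})$ under $\mathbb{P}_n$. The previous step therefore gives $\mathbb{P}(\tau_\infty^{+,(n)}\le t)\to 1$ for every $t>0$. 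Passing the monotone coupling to left limits in $\bar{\mathbb{N}}$ shows that if $\#\Pi^{(n)}(s-)=\infty$ then $\#\Pi^{(n+1)}(s-)=\#\Pi(s-)=\infty$, so the explosion times satisfy $\tau_\infty^{+}\le \tau_\infty^{+,(n+1)}\le \tau_\infty^{+,(n)}$ for every $n$. Hence $\{\tau_\infty^{+,(n)}\le t\}$ is an increasing sequence of events whose probabilities tend to $1$, giving $\inf_n\tau_\infty^{+,(n)}\le t$ a.s.\ for every $t>0$, and finally $\tau_\infty^{+}=0$ a.s.

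The main obstacle is only technical: Lemma \ref{tregular} is stated under $\rho<1/2$, whereas in the regular regime $\rho>1$, so its conclusion cannot be invoked as a black box. Conceptually however the gap is free, since the sole input of its proof---Corollary \ref{genexplosionmaincor} applied to $(N_t^{(p)})$---was already verified inside Lemma \ref{explosioninstable}(1). The only fresh calculation is the explicit choice of $g(v)\propto e^{(1-\alpha)v}/v$, immediate from the lower bound $G_a^{f}(n)\ge c\, n^{1-\alpha}$ of Lemma \ref{estimateGafstable}, from which $\varphi(n_0)\to 0$ geometrically fast.
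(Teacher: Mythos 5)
Your argument is, in substance, the paper's own proof: the paper likewise takes the constants $p$ and $a$ from the proof of Lemma \ref{explosioninstable}(1) (so that $\gamma_{a,p}<1$), applies the quantitative estimate \eqref{lowerboundexplosion} of Lemma \ref{keylemma} with $g(x)=e^{(1-\alpha)x}/x$ to get $\mathbb{P}_{n_0}\bigl(\tau_\infty^{+,(n_0)}\leq \varphi(n_0),\ \tau_\infty^{+,(n_0)}\leq \sigma_p^{(n_0)}\bigr)\to 1$ with $\varphi(n_0)\to 0$, and then uses the monotone coupling of Lemma \ref{monotonicity} to deduce $\tau_\infty^{+}\leq \tau_\infty^{+,(n_0)}$ and hence $\mathbb{P}(\tau_\infty^{+}=0)=1$. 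Your observation that Lemma \ref{tregular} cannot be quoted as a black box (it is stated under $\rho<1/2$) but that its proof runs verbatim once Corollary \ref{genexplosionmaincor} is verified via Lemma \ref{estimateGafstable} is exactly the point, and your handling of the exit-time monotonicity and of the event $\{\varphi(n_0)<\sigma_p\}$ is correct.

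There is one genuine omission: you restrict to $\Pi(0)$ proper, whereas the lemma only assumes $\#\Pi(0)=\infty$. This is not a vacuous restriction --- an exchangeable partition with infinitely many blocks may have dust (e.g.\ singleton blocks), and in that case Lemma \ref{monotonicity} and the law identity $(\#\Pi^{(n)}(t),t<\tau_\infty^{+})\overset{d}{=}(\#\Pi(t),t<\tau_\infty^{+})$ under $\mathbb{P}_n$ are not available (see Remark \ref{properremark}: dislocating a finite block destroys the Markov description). The paper closes this case with a short additional step: for $t>0$ set $\tau_\infty^{+}(t):=\inf\{s>0;\ \#\Pi((t+s)-)=\infty\}$; conditionally on time $t$ the process restarts from a proper partition with $\#\Pi(t-)$ blocks, so $\tau_\infty^{+}(t)$ has the law of $\tau_\infty^{+,(\#\Pi(t-))}$, and since $\#\Pi(t-)\to\infty$ a.s.\ as $t\to 0$, the quantitative estimate already established gives $\tau_\infty^{+}(t)\to 0$ in probability; combined with $\tau_\infty^{+}\leq \tau_\infty^{+}(t)+t$ this yields $\tau_\infty^{+}=0$ a.s. You should add this reduction (or explicitly restrict the statement you prove to proper initial partitions, which is what Proposition \ref{regularforitself} ultimately needs only after the process has regularized itself).
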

\begin{proof}
Assume first that $\Pi(0)$ is proper. We show that explosion occurs instantaneously almost-surely by considering the estimates \eqref{lowerboundexplosion} for the first explosion time. As noticed in the proof of Lemma \ref{explosioninstable}, 
%if $\mu(n)\underset{n\rightarrow \infty}{\sim}\frac{b}{n^{\alpha+1}}$ then 
the condition $\mathbb{H}$ is fulfilled with  
%for instance
the function $g(x)=e^{(1-\alpha)x}/x$ for all $x\geq 1$. For any $n\in \mathbb{N}$, recall the process $(\Pi^{(n)}(t),t\geq 0)$ defined in Section \ref{EFCbasics} and that $(\#\Pi^{(n)}(t),t<\tau_\infty^{+})$ has the same law as $(\#\Pi(t),t<\tau_\infty^{+})$ when $\#\Pi(0)=n$. Let $n_0\in \mathbb{N}$, and recall $\sigma^{(n_0)}_p$ defined in \eqref{sigmap}.
%$$\sigma^{(n_0)}_p=\inf \{t>0, \#\Pi^{(n_0)}(t)\le (1-p)\#\Pi^{(n_0)}(t-)\}.$$
We have seen in the proof of Lemma \ref{lemmaexplosion} that if $\sigma>1$, then one can find  $p$ small enough and $a$ close enough to $1$, such that $\gamma_{a,p}<1$, where $\gamma_{a,p}$ is defined in \eqref{upperboundstable}.
By setting $c:=1-\gamma_{a,p}>0$ and applying the estimate \eqref{lowerboundexplosion}, we obtain that for any $0<\delta<\frac{1}{2a-1}$,
\[\mathbb{P}_{n_0}\left(\tau^{+,(n_0)}_\infty\leq \varphi(n_0), \tau^{+,(n_0)}_\infty\leq \sigma_p^{(n_0)}\right)\underset{n_0\rightarrow \infty}{\longrightarrow} 1\]
with $\tau_{\infty}^{+,(n_0)}:=\inf\{t>0; \#\Pi^{(n_0)}(t-)=\infty\}$ and $\varphi(n_0)=\int_{\frac{1}{1+\delta}\log n_0^{1-\delta}}^{\infty}\frac{\ddr v}{cvg(v)}.$ According to Lemma \ref{monotonicity}, for all $n_0\geq 1$ and all $t\geq 0$, $\#\Pi^{(n_0)}(t)\leq \#\Pi(t)$ almost-surely, thus $\mathbb{P}(\tau_{+}^{\infty}\leq \tau_+^{\infty,(n_0)})=1$.
Since $\varphi(n_0)\underset{n_0\rightarrow \infty}{\longrightarrow} 0$, we have that
\begin{equation}\mathbb{P}\left(\tau^{+}_\infty=0\right)\geq \underset{n_0\rightarrow \infty}{\lim}\mathbb{P}\left(\tau_\infty^{+,n_0}\leq \varphi(n_0)\right)=1.
\end{equation}
If $\Pi(0)$ is improper, then for any $t>0$, set $\tau_\infty^{+}(t):=\inf\{s>0; \#\Pi((t+s)-)=\infty\}$. One has $\tau_\infty^{+}(t)=\tau_\infty^{+, (\#\Pi(t-))}$ in law and since $\#\Pi(t-)\underset{t\rightarrow 0}{\longrightarrow} \infty$ a.s. our previous argument entails that $\tau_\infty^{+}(t)$ goes to $0$ in probability. Since $\tau_\infty^{+}\leq \tau_\infty^{+}(t)+t$, we get $\tau_\infty^{+}=0$ a.s.
\qed
\end{proof}
% Lemma 3.1 in \cite{cdiEFC} ensures that $\Pi(t-)$ is proper for all $t>0$. 
%{\red{
%The critical cases in Theorem \ref{stablefragtheorem} for which the ratio $\frac{b}{d}$ equals $\frac{\alpha\sin(\pi \alpha)}{\pi}$ or $\alpha(1-\alpha)$ seem to be requiring finer arguments. We find in the next proposition a class of coalescence and splitting measures for which the critical value $\frac{b}{d}=\frac{\alpha\sin(\pi \alpha)}{\pi}$ can be treated.
%\begin{proposition}\label{criticalvalue} Let $b,d>0$, $\alpha\in (0,1)$ and $h$ be a measurable function on $[0,1]$ such that $h\geq 1$. Set $\beta=1-\alpha$. Assume that $\Lambda(\ddr x)= d\frac{\beta(\beta+1)}{\Gamma(1-\beta)}x^{-\beta}h(x)\ddr x$ and that $\mu(n)=\frac{b}{n^{1+\alpha}}$ for all integer $n\geq 1$. Then, when $\frac{b}{d}=\frac{\alpha\sin(\pi \alpha)}{\pi}$, the process $(\#\Pi(t),t\geq 0)$ does not explode and comes down from infinity ($\infty$ is an entrance).
%\end{proposition}
We study now the critical case for which $\beta=1-\alpha$ and $\sigma=1$ i.e. $\frac{b}{d}=\frac{\alpha \sin(\pi \alpha)}{\pi}$, and establish Proposition \ref{criticalvalue}.\\

\noindent \textbf{Proof of Proposition \ref{criticalvalue}}.
%\begin{proof}
Recall the assumptions on $\mu$ and $\Lambda$ and the maps $\Phi$ and $\Psi$ in \eqref{phi2} and \eqref{psi}. We first show that under these assumptions $\Phi(n)\geq dn^{1+\beta}-Cn$ for large $n$ and some constant $C>0$. Note that for any $n\geq 2$, $\Psi(n)-\Phi(n)\leq  C'n$ for some constant $C'>0$, see e.g \cite[Lemma 2.1]{limic2015}. Moreover, recalling that for any $n\geq 0$, $\int_{0}^{\infty}(e^{-nx}-1+nx)x^{-2-\beta}\ddr x=\frac{\beta(\beta+1)}{\Gamma(1-\beta)}n^{1+\beta}$, we get
\[\tilde{\Psi}(n):=d\frac{\Gamma(1-\beta)}{\beta(\beta+1)}\int_{0}^{\infty}(e^{-nx}-1+nx)x^{-2-\beta}\left(h(x)\mathbbm{1}_{[0,1]}(x)+\mathbbm{1}_{]1,\infty[}(x)\right)\ddr x\geq dn^{1+\beta}.\] 
%We show now that $\tilde{\Psi}(n)-\Phi(n)\leq Cn$ for some $C>0$. 
For any $n\geq 2$,
\begin{align*}
dn^{1+\beta}-\Phi(n)\leq \tilde{\Psi}(n)-\Phi(n)&=\tilde{\Psi}(n)-\Psi(n)+\Psi(n)-\Phi(n)\\
&= d\frac{\Gamma(1-\beta)}{\beta(\beta+1)}\int_{1}^{\infty}(e^{-nx}-1+nx)x^{-2-\beta}\ddr x +\Psi(n)-\Phi(n)\\
&\leq\left( d\frac{\Gamma(1-\beta)}{\beta(\beta+1)}\int_{1}^{\infty}x^{-1-\beta}\ddr x+C'\right)n\leq Cn.
\end{align*}
We show now that $\mathcal{L}\log(n+1)\leq c\log(n+1)$ for some $c>0$ and $n\geq 1$. Non-explosion will follow by applying Foster-Lyapunov criterion recalled in Section \ref{lyapunovcond}.
Let $n\geq 2$,
\begin{align*}
\mathcal{L}^{c}\log n&=\sum_{k=2}^{n}\log\left( \frac{n-k+1}{n}\right)\binom{n}{k}\lambda_{n,k}\\
&=\sum_{k=2}^{n}\log\left( 1-\frac{k-1}{n}\right)\binom{n}{k}\lambda_{n,k}\leq -\sum_{k=2}^{n}\frac{k-1}{n}\binom{n}{k}\lambda_{n,k}=-\frac{\Phi(n)}{n}\leq -dn^{\beta}+C.
\end{align*}
Recall $I(\alpha):=\int_{0}^{\infty}\frac{b}{x^{1-\alpha}}\log\left(1+x\right)\ddr x=\frac{\pi}{\alpha\sin(\pi \alpha)}$. For any $n\geq 1$, 
\begin{align*}
\mathcal{L}^{f}\log n&=n\sum_{k=1}^{\infty}\frac{b}{k^{1-\alpha}}\log\left(1+\frac{k}{n}\right)\leq nb\log\left(1+\frac{1}{n}\right)+n\sum_{k=2}^{\infty}\int_{k-1}^{k}\frac{b}{x^{1-\alpha}}\log\left(1+\frac{x}{n}\right)\ddr x\\
&\leq b+n\int_{1}^{\infty}\frac{b}{x^{1-\alpha}}\log\left(1+\frac{x}{n}\right)\ddr x\leq b+bn^{1-\alpha}I(\alpha).
\end{align*}
Finally, since $\frac{b}{d}=1/I(\alpha)$ we obtain for any $n\geq 1$,
\[\mathcal{L}\log (n+1)\leq b+C+(bI(\alpha)-d)(n+1)^{1-\alpha}= b+C\leq c\log(n+1)\]
with $c:=\frac{b+C}{\log 2}$. 

By applying Proposition \ref{regularcdi}, we see that $\theta:=\frac{b}{d}\frac{1}{\alpha(1-\alpha)}$. Lemma \ref{ineq} entails that $\theta:=\frac{b}{d}\frac{1}{\alpha(1-\alpha)}<\sigma:=\frac{b}{d}\frac{\pi}{\alpha \sin(\pi \alpha)}$, so that if $\frac{b}{d}=\frac{1}{I(\alpha)}=\frac{\alpha \sin(\pi \alpha)}{\pi}$, then $\sigma=1$, $\theta<1$ and by Proposition \ref{regularcdi}, the process $(\#\Pi(t),t\geq 0)$ comes down from infinity. \qed
%\end{proof}

\subsection{Slower coalescence}\label{slowcoalsec}\

We now study the case of  a coagulation measure satisfying $\Phi(n)\underset{n\rightarrow \infty}{\sim} d n(\log n)^{\beta}$ for some $d>0$ and $\beta>1$ and establish Theorem \ref{logcoal}. Note that the  coalescences  occur slower than that  in the previous section. Let $b>0$ and $\alpha>0$. Consider a splitting measure $\mu$ satisfying  $\mu(n)\underset{n\rightarrow  \infty}{\sim} b \frac{(\log n)^{\alpha}}{n^2}$. In this case, its tail asymptotics are of the form

\begin{center} $\bar{\mu}(n)\underset{n\rightarrow \infty}{\sim} b \frac{(\log n )^{\alpha}}{n}$ and $\sum_{k=1}^{n}k\mu(k) \underset{n\rightarrow \infty}{\sim} \frac{b}{\alpha+1}(\log n)^{\alpha+1}$. \end{center} Recalling formula \eqref{ellformula}, we get $\ell(n)  \underset{n\rightarrow \infty}{\sim} \frac{b}{\alpha+1}(\log n)^{\alpha+1}$.

\begin{lemma}\label{logcoallemma} One has the following classification
\begin{itemize}
\item if $\beta<1+\alpha$, then $(\#\Pi(t),t\geq 0)$ explodes and stays infinite almost-surely:
%\vspace*{3mm}
%\begin{center}
$\infty$ is an exit boundary,
\vspace*{1mm}
%\end{center}
%\vspace*{3mm}
\item if $\beta>1+\alpha$, then $(\#\Pi(t),t\geq 0)$ does not explode  and when started from a partition with infinitely many blocks, comes down from infinity instantaneously almost-surely: $\infty$ is an entrance boundary,
\end{itemize}
\end{lemma}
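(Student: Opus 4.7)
The plan is to deduce both statements directly from the general criteria proved earlier in the paper, namely Theorem~\ref{suffcondpropexit} for the exit case and Theorem~\ref{suffcondpropentrance} for the entrance case. As a preliminary, I note that the hypothesis $\mu(n) \sim b(\log n)^{\alpha}/n^{2}$ yields, by straightforward integral comparisons, the tail estimates $\bar{\mu}(n) \sim b(\log n)^{\alpha}/n$ and $\sum_{k=1}^{n} k\mu(k) \sim \frac{b}{1+\alpha}(\log n)^{1+\alpha}$, and hence by identity~\eqref{ellformula} the already stated asymptotic $\ell(n) \sim \frac{b}{1+\alpha}(\log n)^{1+\alpha}$, so $n\ell(n)\sim \frac{b}{1+\alpha}n(\log n)^{1+\alpha}$.

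For the exit case $\beta < 1+\alpha$, I will check the hypotheses of Theorem~\ref{suffcondpropexit}. Taking $g(x) := c x^{\alpha}$ with any constant $0 < c < b/(1+\alpha)$, the function $g$ is non-decreasing (using $\alpha>0$), the integral $\int^{\infty}\ddr x/(xg(x)) = c^{-1}\int^{\infty}x^{-1-\alpha}\ddr x$ converges, and $\ell(n) \geq c(\log n)^{\alpha+1} = g(\log n)\log n$ for large $n$; thus condition $\mathbb{H}$ holds. The key ratio satisfies
\[
\frac{\Phi(n)}{n\ell(n)} \underset{n\to\infty}{\sim} \frac{d(1+\alpha)}{b}(\log n)^{\beta-1-\alpha} \longrightarrow 0
\]
precisely because $\beta - 1 - \alpha < 0$. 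Theorem~\ref{suffcondpropexit} then gives that $\infty$ is an exit boundary, which by definition means the process explodes and cannot leave $\infty$.

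For the entrance case $\beta > 1+\alpha$, I will check the hypotheses of Theorem~\ref{suffcondpropentrance}. The above tail asymptotics give
\[
\frac{n\bar{\mu}(n)}{\Phi(n)} \underset{n\to\infty}{\sim} \frac{b}{d}\cdot \frac{1}{n(\log n)^{\beta-\alpha}},
\]
which is summable by the Bertrand criterion since $\beta - \alpha > 1$, and similarly $\sum 1/\Phi(n) < \infty$ by Bertrand since $\Phi(n)\sim dn(\log n)^{\beta}$ with $\beta > 1$. Theorem~\ref{suffcondpropentrance} then concludes that $(\#\Pi(t),t\geq 0)$ does not explode and that $\infty$ is an entrance boundary, with the instantaneous coming down from infinity furnished by Proposition~\ref{instantaneouscdi}. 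No substantial obstacle arises: the whole argument is a routine asymptotic computation feeding into two already established theorems, and the critical threshold $\beta = 1+\alpha$ is deferred to the separate Lemma~\ref{thirdpartTheoremslow}.
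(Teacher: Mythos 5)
Your proof is correct and follows exactly the route the paper takes: the paper's own proof of this lemma simply cites Theorem~\ref{suffcondpropexit} and Theorem~\ref{suffcondpropentrance} via Example~\ref{exampleexit}-(2) and Example~\ref{exampleentrance}-(2), and your asymptotic verifications of condition $\mathbb{H}$, of $\Phi(n)/(n\ell(n))\to 0$, and of the summability of $n\bar{\mu}(n)/\Phi(n)$ and $1/\Phi(n)$ are precisely the computations underlying those examples. No discrepancy to report.
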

\begin{proof}
The cases $\beta<1+\alpha$ and $\beta>1+\alpha$ respectively are consequences of Theorem \ref{suffcondpropexit} and Theorem \ref{suffcondpropentrance}, respectively. See  Example \ref{exampleexit}-(2) and Example \ref{exampleentrance}-(2). \qed
\end{proof}

Set $\theta=\frac{b}{d(1+\alpha)}$. It has been established in \cite[Proposition 1.8]{cdiEFC} that when $\beta=1+\alpha$, the process $(\#\Pi(t),t\geq 0)$ either comes down from infinity or stays infinite depending on $\theta<1$ or $\theta>1$,  respectively.

We study now the accessibility of the boundary $\infty$ in the case $\beta=1+\alpha$ and complete the proof of Theorem \ref{logcoal}.

\begin{lemma}\label{thirdpartTheoremslow} Let $\alpha>0$. Assume that $\mu(n)\underset{n\rightarrow \infty}{\sim}b \frac{(\log n)^{\alpha}}{n^2}$ and  $\Phi(n)\underset{n\rightarrow \infty}{\sim} d n(\log n)^{\alpha+1}.$
If $\theta>1$, then the process explodes and $\infty$ is an exit boundary. If $\theta<1$, then the process does not explode and $\infty$ is an entrance.
%%For any $a>1$, and any $\epsilon>0$, there is $n_0$ such that if $n\geq n_0$,
%\begin{equation}\label{lowerboundfragbig}
%G^{f}_{a}(n)\geq \frac{b}{1+\epsilon} n (\log n )^{1-\alpha}.
%\end{equation}
\end{lemma}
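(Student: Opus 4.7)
The plan is to adapt the proofs of Lemma \ref{explosioninstable} and Lemma \ref{part2proofregularvarytheorem} to the slower (logarithmic) regime. Since $\Phi(n)/n\sim d(\log n)^{\alpha+1}$ and $\ell(n)\sim\frac{b}{\alpha+1}(\log n)^{\alpha+1}$, the ratio $\rho=\underset{n\to\infty}{\limsup}\Phi(n)/(n\ell(n))$ equals $1/\theta$, so the crude bound $\rho<1/2$ of Lemma \ref{lemmaexplosion} is insufficient. I must therefore apply Corollary \ref{genexplosionmaincor} and Proposition \ref{genmaincornonexplosion} directly, which requires sharp asymptotics for $G_a^{f}$.

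The key analytic input is the following logarithmic analogue of Lemma \ref{estimateGafstable}: for every fixed $a\neq 1$,
\[
G_a^{f}(n)\underset{n\to\infty}{\sim}\frac{(a-1)b}{\alpha+1}(\log n)^{\alpha+1}.
\]
To establish this I split the defining sum at $k=n$. For $k\le n$, writing $h(u)=1-(1+u)^{1-a}$, the expansion $h(u)=(a-1)u+O(u^{2})$ near $0$ combined with $\sum_{k\le m}k\mu(k)\sim\frac{b}{\alpha+1}(\log m)^{\alpha+1}$ yields the stated leading order; the standard way to make this rigorous is to truncate further at $k=n^{1-\delta}$ (where the expansion is quantitatively accurate) and then send $\delta\downarrow 0$. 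For $k>n$ the contribution is bounded by $n\bar\mu(n+1)=O((\log n)^{\alpha})$, which is of lower order. The case $a<1$ is symmetric and gives $-G_a^{f}(n)\sim(1-a)\frac{b}{\alpha+1}(\log n)^{\alpha+1}$.

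For the explosion statement $(\theta>1)$, I consider the process stopped at $\sigma_p^{(n_0)}$ as in the proof of Lemma \ref{lemmaexplosion}. Combining the asymptotic above with Lemma \ref{estimatesexplosion}(i) yields
\[
\underset{n\to\infty}{\limsup}\frac{-G_a^{c,p}(n)}{G_a^{f}(n)}\le\frac{(1-p)^{-a}d(\alpha+1)}{b}=\frac{(1-p)^{-a}}{\theta},
\]
which is strictly less than $1$ whenever $p>0$ is small and $a>1$ is close enough to $1$. Condition $\mathbb{H}$ holds for the eventually non-decreasing $g(x)=cx^{\alpha}$ with $c>0$ small, since $\int^{\infty}\ddr x/(xg(x))=\int^{\infty}\ddr x/(cx^{\alpha+1})<\infty$. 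Corollary \ref{genexplosionmaincor} then gives that the stopped process explodes with positive probability from every large initial state, and the end of the proof of Lemma \ref{lemmaexplosion} (using $\inf_{n\ge 2}\sigma_p^{(n)}>0$ a.s., irreducibility, and the Markov property) upgrades this to almost sure explosion from every starting state. Since $\theta>1$, \cite[Proposition 1.8]{cdiEFC} rules out the coming down from infinity, so $\infty$ is an exit.

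For the non-explosion statement $(\theta<1)$, I apply Proposition \ref{genmaincornonexplosion} with $a\in(0,1)$ close to $1$. The asymptotic for $-G_a^{f}$ above together with the lower bound $G_a^{c}(n)\ge(1-a)\Phi(n)/n$ of Lemma \ref{estimateGaentrance}(1) yields
\[
\underset{n\to\infty}{\limsup}\frac{-G_a^{f}(n)}{G_a^{c}(n)}\le\frac{b}{d(\alpha+1)}=\theta<1,
\]
and Proposition \ref{genmaincornonexplosion} delivers non-explosion. Combined with the coming down from infinity given by \cite[Proposition 1.8]{cdiEFC} (valid since $\theta<1$), this shows $\infty$ is an entrance. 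The main obstacle is the sharp fragmentation asymptotic: unlike the power-law setting of Lemma \ref{estimateGafstable}, which reduces to a clean scaling integral $i_\alpha(a)$, here the slowly-varying factor $(\log n)^{\alpha}$ prevents any exact self-similar rescaling and forces the two-scale truncation argument sketched above.
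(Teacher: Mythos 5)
Your proposal is correct and follows essentially the same route as the paper: stop the chain at $\sigma_p^{(n_0)}$, bound $-G_a^{c,p}$ via Lemma \ref{estimatesexplosion}(i), get a matching lower (resp.\ upper) bound on $G_a^{f}$ of order $(\log n)^{\alpha+1}$, and feed these into Corollary \ref{genexplosionmaincor} (resp.\ Proposition \ref{genmaincornonexplosion}), with the entrance/exit classification supplied by the coming-down-from-infinity criterion. The only cosmetic difference is that you derive the exact asymptotic $G_a^{f}(n)\sim\frac{(a-1)b}{\alpha+1}(\log n)^{\alpha+1}$ by a two-scale truncation, whereas the paper settles for one-sided bounds with $(1+\epsilon)^{-a}$ slack (and, for non-explosion, packages the same estimates through Proposition \ref{suffcondpropentrance1}); both yield the same limsup ratios $(1-p)^{-a}/\theta$ and $\theta$.
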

\begin{proof} For simplicity,  we assume that $\mu(n)=b \frac{(\log n)^{\alpha}}{n^2}$ for all $n\geq 1$. The case for which only the equivalence $\mu(n)\underset{n\rightarrow \infty}{\sim} b \frac{(\log n)^{\alpha}}{n^2}$ holds, follows from an easy adaptation.
Let $a>1$ and $\epsilon>0$. One can check that for any $0\leq x\leq \epsilon$, \[g(x)=1- (1+x)^{1-a}-(a-1)(1+\epsilon)^{-a}x\geq 0.\]
Assume $\theta>1$. For any $n\geq 1$
\begin{align*}
G_a^{f}(n)&=b n\sum_{k=1}^{\infty}\frac{(\log k)^{\alpha}}{k^2}\left[1-(1+k/n)^{1-a}\right]\geq b n \int_{1}^{\epsilon n}\frac{(\log x)^{\alpha}}{x^2}\left[1-(1+x/n)^{1-a}\right] \ddr x\\
&\geq b (a-1)(1+\epsilon)^{-a} \int_{1}^{\epsilon n}\frac{(\log x)^{\alpha}}{x}\ddr x= b (a-1)(1+\epsilon)^{-a}\frac{(\log \epsilon n)^{\alpha+1}}{\alpha+1}.
\end{align*}
By Lemma \ref{estimatesexplosion}-(i), for any $p>0$, \[-G_a^{c,p}(n)\leq d (\log n)^{\alpha+1}(a-1)(1-p)^{-a}.\]
Thus, for all $n$, $$\frac{-G_a^{c,p}(n)}{G^f_a(n)}\leq \frac{d(\alpha+1)}{b} \left(\frac{1+\epsilon}{1-p}\right)^{a}.$$
Recall that  $\frac{d(\alpha+1)}{b}=\frac{1}{\theta}<1$ and choose both $\epsilon$ and $p$ small enough such that  $\frac{d(\alpha+1)}{b}\frac{1+\epsilon}{1-p}<1$. We see that there is $a>1$ such that $\underset{n\rightarrow \infty}{\limsup}\frac{-G_a^{c,p}(n)}{G_a^{f}(n)}<1$. By Corollary \ref{genexplosionmaincor}, we see that when $\theta>1$, the process $(\#\Pi(t\wedge \sigma_p),t\geq 0)$ explodes with positive probability. Following the same argument as in the proof of Lemma \ref{lemmaexplosion}, we get that the process explodes almost-surely.

Assume now $\theta<1$. Let $a<1$. Recall $\sum_{k=1}^{n}k \mu(k)\underset{n\rightarrow }{\sim} \frac{b}{\alpha +1} (\log n)^{\alpha+1}$ and  $\Phi(n)\underset{n\rightarrow \infty}{\sim} d n(\log n)^{\alpha+1}$. We apply Proposition \ref{suffcondpropentrance1}. By comparison with an integral and by applying Karamata's theorem, we obtain \[\sum_{k=n+1}^{\infty}k^{1-a}\mu(k)\underset{n\rightarrow \infty}{\sim}  b \int_{n+1}^{\infty}x^{-a-1}(\log x)^{\alpha}\ddr x \underset{n\rightarrow \infty}{\sim} \frac{b}{a} n^{-a}(\log n)^{\alpha}.\]
Therefore, $\frac{n^{a+1}}{\Phi(n)}\sum_{k=n+1}^{\infty}k^{1-a}\mu(k)\underset{n\rightarrow \infty}{\sim} \frac{b}{d \log n}\underset{n\rightarrow \infty}{\longrightarrow} 0$, and  \eqref{secondcond} holds.
%We also have that $n^{1-a}\bar{\mu}(n)\underset{n\rightarrow \infty }{\longrightarrow} 0$ and
Moreover,
\[\underset{n\rightarrow \infty}{\limsup} \frac{n}{\Phi(n)} \sum_{k=1}^{n}k \mu(k)=\frac{b}{d(1+\alpha)}=\theta.\]

Applying Proposition \ref{suffcondpropentrance1}, we see that if $\theta<1$, then the process does not explode. The process comes down from infinity by Theorem \ref{cditheorem}. \qed
\end{proof}
\noindent \textbf{Proof of Theorem \ref{logcoal}.} The first two statements of Theorem \ref{logcoal} are provided by Lemma \ref{logcoallemma}. The third statement is provided by Lemma \ref{thirdpartTheoremslow}. \qed
%\vspace*{0.15cm}

We conclude this article by highlighting that 
%we have focussed our study on EFC processes started from proper exchangeable partitions. It is natural to wonder if our results can be generalized to initial partitions with infinitely many blocks that contain dust. In this setting, as explained in Remark \ref{properremark}, the monotone coupling defined in Lemma \ref{monotonicity} does not hold, and some other arguments have to be designed.
%Moreover, 
the nature of the boundary $\infty$ is not known in general for the critical cases $\sigma=1$ and/or $\theta=1$. In view of the proof of Proposition \ref{criticalvalue}, this may require finer estimates that are unavailable with the functions $(G_a, a>0)$. Last, the question whether the regular boundary $\infty$ is sticky or reflecting (Theorem \ref{stablefragtheorem}) has not been treated in this paper. It is established that the boundary is regular reflecting in \cite{WFselection}. The results presented here and in \cite{cdiEFC} have counterparts 
for certain processes in duality, called $\Lambda$-Wright-Fisher processes with frequency-dependent selection. They are treated in \cite{WFselection}.

\vspace*{0.15cm}

\noindent \textbf{Acknowledgements:} C.F's research is partially  supported by LABEX MME-DII (ANR11-LBX-0023-01). X.Z.'s research is supported by Natural Sciences and Engineering Research Council of Canada (RGPIN-2016-06704) and by  National Natural Science Foundation of China (No.\  11731012).


\begin{thebibliography}{KPRS17}
\bibitem[Ald99]{Aldous}
David Aldous, \emph{Deterministic and stochastic models for coalescence (aggregation and coagulation): a review of the mean-field theory for probabilists}, Bernoulli \textbf{5} (1999), no.~1, 3--48.

\bibitem[And91]{Anderson} William.~J.~{Anderson},
\newblock {\em Continuous-time {M}arkov chains}.
\newblock Springer Series in Statistics: Probability and its Applications.  Springer-Verlag, New York, 1991.
\newblock An applications-oriented approach.

\bibitem[BPMS13]{Bansayeetal} Vincent Bansaye, Juan Carlos Pardo Millan, Charline Smadi, \emph{On the extinction of continuous state branching processes with catastrophes}. Electron. J. Probab. \textbf{18} (2013), paper no. 106

\bibitem[Beres04]{Berestycki04}
Julien Berestycki, \emph{Exchangeable fragmentation-coalescence processes and
  their equilibrium distribution}, Electron. J. Probab. (2004), 9--770.

\bibitem[BBL10]{Beres10}
Julien Berestycki, Nathana{\"e}l Berestycki, and Vlada Limic, \emph{The
  {$\Lambda$}-coalescent speed of coming down from infinity}, Ann. Probab.
  \textbf{38} (2010), no.~1, 207--233.

\bibitem[Beres09]{Beresbook}
Nathana{\"e}l Berestycki, \emph{Recent progress in coalescent theory}, Ensaios
  Matem\'aticos [Mathematical Surveys], vol.~16, Sociedade Brasileira de
  Matem\'atica, Rio de Janeiro, 2009.
  
\bibitem[Ber03]{Bertoin2003} Jean Bertoin, \emph{ The asymptotic behavior of fragmentation processes}, J. Eur. Math. Soc. \textbf{5}, 395–416 (2003).


\bibitem[Ber06]{coursbertoin}
Jean Bertoin, \emph{Random fragmentation and coagulation processes}, Cambridge
  Studies in Advanced Mathematics, vol. 102, Cambridge University Press,
  Cambridge, 2006.

\bibitem[BK16]{BertoinKortchemski}
Jean Bertoin and Igor Kortchemski,  \emph{Self-similar scaling limits of {M}arkov chains on the positive integers}, Ann. Appl. Probab. \textbf{26} (2016), no.~4, 2556--2595.

\bibitem[CK11]{Chow}
Pao-Liu Chow, Rafael Khasminskii, \emph{Method of Lyapunov functions for analysis of absorption and explosion in Markov chains.} Probl Inf Transm \textbf{47}, 232 (2011)

\bibitem[Don84]{Doney1984}
Ron~A. Doney, \emph{A note on some results of Schuh}, Journal of Applied  Probability \textbf{21} (1984), no.~1, pp. 192--196 (English).

\bibitem[DK19]{DoeringKyprianou}
Leif ~{D\"{o}ring} and Andreas~{Kyprianou},
\emph{Entrance and exit at infinity for stable jump diffusions}.  Ann. Probab. \textbf{48}(3), (2020) 1220-1265.

\bibitem[Fel59]{Feller} William Feller, \emph{{The birth and death processes as diffusion processes}}, J. Math. Pure Appl. \textbf{38}(3), (1959) 301-345.

\bibitem[{Fou}11]{coaldist11}
Cl\'ement~{Foucart}, \emph{{Distinguished exchangeable coalescents and generalized  Fleming-Viot processes with immigration}}, Adv. Appl. Prob. \textbf{43} (2011), no.~2.

\bibitem[Fou19]{FoucartEJP}
Cl\'ement~{Foucart}, \emph{{Continuous-state branching processes with competition: duality and reflection at infinity}}, EJP  \textbf{24} (2019)

\bibitem[Fou20+]{cdiEFC}
Cl\'ement Foucart, \emph{A phase transition in the coming down from infinity of simple exchangeable fragmentation-coagulation processes}, ArXiv, 2020+

\bibitem[FZ20+]{WFselection}
Cl\'ement Foucart and Xiaowen Zhou, \emph{On the boundary classification of $\Lambda$-Wright-Fisher processes with frequency-dependent selection}, ArXiv, 2020+

\bibitem[GS18]{GonzalesSpano}
Adri\'an Gonz\'alez Casanova, Dario Span\`o, \emph{Duality and fixation in $\Xi$-Wright–Fisher processes with frequency-dependent selection.} Ann. Appl. Probab. \textbf{28} (2018), no. 1,

\bibitem[GPP20+]{Gonzalesetal} Adri\'an Gonz\'alez Casanova; Juan Carlos Pardo, Jos\'e Luis Perez \emph{Branching processes with interactions: the subcritical cooperative regime},  ArXiv  eprint {1704.04203}.

\bibitem[Gre89]{Grey1989}
David.~R. Grey, \emph{A note on explosiveness of markov branching processes},
  Advances in Applied Probability \textbf{21} (1989), no.~1, pp. 226--228
  (English).

\bibitem[Har63]{Harris}
Theodore E. Harris, \emph{The Theory of Branching Processes}, Die Grundlehren der Mathematischen Wissenschaften 119. Springer, Berlin, 1963

\bibitem[Kuh19]{Kuhn} Franziska K\"uhn, \emph{Perpetual integrals via random time changes.} Bernoulli \textbf{25} (2019), no. 3, 1755--1769.

\bibitem[KPRS17]{kyprianou2017}
Andreas Kyprianou, Steven Pagett, Tim Rogers, and Jason Schweinsberg,
  \emph{A phase transition in excursions from infinity of the fast fragmentation-coalescence process}, Ann. Probab. \textbf{45} (2017), no.~6A, 3829--3849.

\bibitem[Lam05]{Lambert:2005bq}
Amaury Lambert, \emph{The branching process with logistic growth}, Annals of Applied
  Probability \textbf{15} (2005), 1506--1535.

\bibitem[LZ18]{2018arXiv180905759L}
Pei.-Sen {Li} and Xiaowen~{Zhou},
\newblock \emph{Integral functionals for spectrally positive L\'evy processes}.
\newblock {\em ArXiv: 1809.05759}, Sep 2018.

\bibitem[LYZ19]{Lietal}
Pei-Sen Li,  Xu Yang, Xiaowen Zhou, \emph{A general continuous-state nonlinear branching process}. Ann. Appl. Probab. \textbf{29} (2019), no. 4, 2523--2555.

\bibitem[LT15]{limic2015} Vlada Limic and Anna Talarczyk, \emph{Second-order asymptotics for the block counting process in a class of regularly varying ${\Lambda}$-coalescents},  Ann. Probab. \textbf{43} (2015), no.~3, 1419--1455.

\bibitem[MP14]{menshikov} Mikhail Menshikov, Dimitri Petritis,
\emph{Explosion, implosion, and moments of passage times for continuous-time Markov chains: A semimartingale approach}, Stochastic Processes and their Applications, Volume \textbf{124}, Issue 7,
(2014), Pages 2388-2414.

%\bibitem[MH12]{MohleHerriger}
%Martin M\"ohle and Philip Herriger, \emph{Conditions for exchangeable
%  coalescents to come down from infinity}, ALEA Lat. Am. J. Probab. Math. Stat.
%  \textbf{9} (2012), 637--665.

\bibitem[Pit99]{Pitman99}
Jim Pitman, \emph{Coalescents with multiple collisions}, Ann. Probab.
  \textbf{27} (1999), no.~4, 1870--1902.

\bibitem[Prot05]{Protter} Philip Protter, \emph{Stochastic integration and differential equations}. Second edition. Stochastic Modelling and Applied Probability, 21. Springer-Verlag, Berlin, (2005).

\bibitem[Sag99]{MR1742154}
Serik Sagitov, \emph{The general coalescent with asynchronous mergers of
  ancestral lines}, J. Appl. Probab. \textbf{36} (1999), no.~4, 1116--1125.

\bibitem[Sch82]{Schuh82} Hanz-J\"urgen Schuh, \emph{Sums of i.i.d. random variables and an application to the explosion criterion for markov branching processes}. Journal of Applied Probability, \textbf{19}(1), (1982) 29-38.

\bibitem[Sch66]{Schwartz} Laurent Schwartz, \emph{Mathematics for the Physical Sciences}. (1966) Hermann and Addison-Wesley (English translation).

\bibitem[Sch00]{Schweinsberg00} Jason Schweinsberg, \emph{A necessary and sufficient condition for the {$\Lambda$}-coalescent to come down from infinity}, Electron. Comm. Probab. \textbf{5} (2000), 1--11 (electronic).

%\bibitem[Spi15]{Spieksma} Floske Spieksma, \emph{Countable state Markov processes: non-explosiveness and moment function}, Probability in the Engineering and Informational Sciences, \textbf{29}(4), (2015) 623-637.

\bibitem[Wag05]{Wagner} Wolfgang Wagner, \emph{Explosion phenomena in stochastic coagulation–fragmentation models}, Ann. Appl. Probab. \textbf{15} (2005), no. 3, 2081--2112.

\end{thebibliography}
\end{document}